\newtheorem{thm}{Theorem}[section]
\newtheorem{prop}[thm]{Proposition}
\newtheorem{lem}[thm]{Lemma}
\newtheorem{cor}[thm]{Corollary}
\newtheorem{oss}{Remark}
\def\rn{\mathbb{R}^n}
\def\R{\mathbb{R}}
\def\Q{\mathbb{Q}}
\def\rn{\mathbb{R}^n}
\def\RR{\mathbb{R}}
\def\Sn{\mathcal{S}_n}
\title[Quantitative BBL inequalities with applications]{Quantitative Borell-Brascamp-Lieb inequalities for power concave functions (and some applications)}
\begin{document}

\author{Daria Ghilli and Paolo Salani}
\address{Daria Ghilli, Dipartimento di Matematica - Universit\`a di Padova, daria.ghilli@gmail.com}
\address{Paolo Salani, DiMaI "U. Dini" - Universit\`a di Firenze, paolo.salani@unifi.it}
\begin{abstract}
We strengthen, in two different ways, the so called Borell-Brascamp-Lieb inequality in the class of power concave functions.
 As examples of applications we obtain two quantitative versions of the Brunn-Minkowski inequality and of the Urysohn inequality for torsional rigidity.
\end{abstract}
\maketitle
\section[{\bfseries Introduction}]{\bfseries Introduction}
Throughout the paper $u_0$ and $u_1$ will be real non-negative bounded functions belonging to  $L^1(\mathbb{R}^n)$ ($n\geq 1$)  with compact supports $\Omega_0$ and $\Omega_1$ respectively. 
To avoid triviality, we will assume that
$$I_i=\int_{\mathbb{R}^n} \! u_i \, dx >0\quad\text{ for }i=0,1\,.
$$ 
For $\lambda \in (0,1)$, denote by  $\Omega_\lambda$ the Minkowski convex combination (with coefficient $\lambda$) of $\Omega_0$ and $\Omega_1$, that is
$$
\Omega_{\lambda}=(1-\lambda)\Omega_0+\lambda \Omega_1=\{(1-\lambda)x_0+\lambda x_1 \, : \, x_0 \in \Omega_0, \, x_1 \in \Omega_1\}.
$$
The aim of this paper is to prove some refinements of the so-called {\em Borell-Brascamp-Lieb inequality} (BBL inequality below) for power concave functions (with compact support). Then let us first recall the BBL inequality. 
\begin{thm}[BBL inequality] \label{bbl}
Let $0<\lambda <1, -\frac{1}{n} \leq p \leq \infty$, $0\leq h\in L^1(\rn)$ and  assume the following holds
\begin{equation}\label{assumptionh}
h((1-\lambda) x + \lambda y) \geq \mathcal{M}_p(u_0(x), u_1(y), \lambda)
\end{equation}
for every $x \in\Omega_0,\, y \in \Omega_1$. Then
\begin{equation}\label{eq0}
\int_{\Omega_{\lambda}} \! h(x) \, dx\geq \mathcal{M}_{\frac{p}{np+1}} \left (I_0, I_1, \lambda \right).
\end{equation}
\end{thm}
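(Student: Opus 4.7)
The plan is the classical induction on the dimension $n$, with the one-dimensional case proved by a mass-transport argument and the inductive step handled via Fubini and slicing.

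For $n=1$, after a standard approximation reducing to the case where $u_0, u_1$ are continuous and strictly positive on their connected supports, I would parametrize by the cumulative distributions: define $x_i \colon (0,1) \to \R$ by $\int_{-\infty}^{x_i(t)} u_i(s)\,ds = t\, I_i$, so that $x_i'(t) = I_i/u_i(x_i(t))$, and set $z(t) = (1-\la)x_0(t) + \la x_1(t)$. Then $z$ is strictly increasing with image contained in $\Om_\la$, and
$$z'(t) = \mathcal{M}_1\!\left(\frac{I_0}{u_0(x_0(t))}, \frac{I_1}{u_1(x_1(t))};\la\right).$$
Changing variable via $x = z(t)$ and invoking hypothesis \eqref{assumptionh} gives
$$\int_{\Om_\la} h\,dx \geq \int_0^1 \mathcal{M}_p\bigl(u_0(x_0(t)),u_1(x_1(t));\la\bigr)\cdot \mathcal{M}_1\!\left(\frac{I_0}{u_0(x_0(t))},\frac{I_1}{u_1(x_1(t))};\la\right)dt.$$
The conclusion in dimension one then follows from the key pointwise inequality between $p$-means,
$$\mathcal{M}_p(a,b;\la)\cdot\mathcal{M}_1(c,d;\la)\geq \mathcal{M}_{p/(p+1)}(ac,bd;\la), \qquad p\geq -1,$$
applied with $(a,b,c,d) = (u_0(x_0(t)), u_1(x_1(t)), I_0/u_0(x_0(t)), I_1/u_1(x_1(t)))$: the integrand collapses to the constant $\mathcal{M}_{p/(p+1)}(I_0, I_1;\la)$, which is precisely \eqref{eq0} when $n=1$.

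For the inductive step, assume the theorem in dimension $n-1$ (note that $p\geq -1/n$ implies $p\geq -1/(n-1)$) and write $x=(x',x_n)\in \R^{n-1}\times\R$. For arbitrary $x_{0,n}, x_{1,n}$ with $z_n=(1-\la)x_{0,n}+\la x_{1,n}$, the slices $u_0(\cdot,x_{0,n}), u_1(\cdot,x_{1,n}), h(\cdot,z_n)$ satisfy the BBL hypothesis in $\R^{n-1}$, so the inductive hypothesis yields
$$H(z_n):=\int_{\R^{n-1}} h(z',z_n)\,dz'\geq \mathcal{M}_q\bigl(U_0(x_{0,n}),U_1(x_{1,n});\la\bigr),$$
where $U_i(t):=\int_{\R^{n-1}} u_i(\cdot,t)\,dx'$ and $q=p/((n-1)p+1)$. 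One checks that $p\geq -1/n$ is equivalent to $q \geq -1$, so the already-established one-dimensional case applies to $U_0,U_1,H$ with exponent $q$ and produces $\int_{\R} H \geq \mathcal{M}_{q/(q+1)}(I_0,I_1;\la)$. A short computation gives $q/(q+1)=p/(np+1)$, closing the induction.

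The principal technical obstacles are the algebraic inequality $\mathcal{M}_p \cdot \mathcal{M}_1 \geq \mathcal{M}_{p/(p+1)}$ for products (which I would prove by taking logarithms and invoking Jensen/convexity, with separate attention to the singular values $p=0$ and $p=\pm\infty$) and the approximation argument needed to make the transport parametrization well-defined; one must also verify that the hypothesis \eqref{assumptionh} and the conclusion \eqref{eq0} pass to the limit. The extremal exponents $p=-1/n$ and $p=\infty$, where $\mathcal{M}_p$ degenerates, are then handled either by monotonicity/continuity in $p$ or by a direct verification.
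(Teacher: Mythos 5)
Your proof is correct, but it follows a genuinely different route from the one the paper presents in \S3. You adopt the classical Brascamp--Lieb/Borell strategy: induction on dimension, with the base case $n=1$ handled by a spatial transport map $x_i(t)$ pushing Lebesgue measure on $[0,1]$ to the normalized measure $u_i\,dx/I_i$, and the inductive step via Fubini and slicing. The single algebraic ingredient is the product bound $\mathcal{M}_p\cdot\mathcal{M}_1\geq\mathcal{M}_{p/(p+1)}$, which is the $q=1$ instance of Lemma~\ref{eqn:mean2}, and the exponent bookkeeping ($q=p/((n-1)p+1)$, $q/(q+1)=p/(np+1)$) and domain check ($p\geq -1/n\Leftrightarrow q\geq -1$) are exactly right. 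The paper instead works in one shot in $\R^n$: it defines a transport $s_i(t)$ along the \emph{level} axis, normalized so that $s_i'(t)\mu_i(s_i(t))=I_i$, invokes the Brunn--Minkowski inequality in $\R^n$ on superlevel sets to get \eqref{eqn:star}, and then applies Lemma~\ref{eqn:mean2} twice (once with exponents $1/n$ and $1$, once with $1/(n+1)$ and $p/(1-p)$) to conclude. The two proofs are not equivalent in scope: your induction handles the full stated range $-1/n\leq p\leq\infty$ after the usual approximation/limit arguments, whereas, as the introduction announces, the paper's proof is given only for $p\in(0,\infty)$ and uses the $p$-concavity of $u_0,u_1$ to ensure continuity of the distribution functions $\mu_i$ and well-posedness of $s_i$. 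What the level-transport formulation buys, and the reason the paper chose it, is that it localizes the deficit: in the proof of Theorem~\ref{thm:teostab} a small slack in \eqref{eq0} is converted into a pointwise bound on $\mu_\lambda(s_\lambda(t))-\mathcal{M}_{1/n}(\mu_0(s_0(t)),\mu_1(s_1(t)),\lambda)$ on a large set of $t$, which feeds directly into Brunn--Minkowski stability for the supports. Your dimensional induction, while cleaner and more general as a proof of Theorem~\ref{bbl} itself, would not expose that level-set information.
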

Here the number $p/(np + 1)$ has to be interpreted in the obvious way in the extremal case, i.e. it is
equal to $-\infty$ when $p =- 1/n$ and to $1/n$ when $p = \infty$, while for $q\in[-\infty,+\infty]$ and $\mu\in(0,1)$ the quantity $\mathcal{M}_q(a,b,\mu)$ represents the ($\mu$-weighted) {\em $q$-mean} of two non-negative numbers $a$ and $b$,  
which is defined as follows: 
\begin{equation}\label{pmean}
\mathcal{M}_q(a, b; \mu) =\left\{
 \begin{array}{ll}
 \max\{a,b\}  \, \,\, \, & q=+\infty \\
 \left[ (1-\mu)a^q + \mu b^q\right]^{\frac{1}{q}} \, \, &\mbox{if } \, \, 0\neq q\in\R \,\text{ and }\, ab>0\\
a^{1-\mu} b^\mu \,\, \, \, &\mbox{if } q=0\\
  \min\{a,b\} \, \, \, \, &q=-\infty\\
               0\,\,&\mbox{when } \, q\in\R\,\,\,\text{ and }ab=0\,.
                  \end{array}
\right.\,
\end{equation}

The BBL inequality was first proved in a slightly different form for $p > 0$  by Henstock and Macbeath (with $n=1$)
in \cite{HM} and by Dinghas in \cite{DI}. In its generality it is stated and proved by Brascamp and Lieb in \cite{BL2} and by Borell in \cite{BO}. The case $p = 0$ was previously proved by Pr\'{e}kopa \cite{Prekopa} and Leindler \cite{Leindler} (and rediscovered by Brascamp and Lieb in \cite{BL1}) and it is usually known as the {\em Pr\'{e}kopa-Leindler inequality} (PL inequality in the following). Noticeably, the PL inequality can be considered a functional form of the Brunn-Minkowski inequality (see \S2.3 and refer to \cite{gardner} for details) and the same could be said for the BBL inequality for every $p$.
The equality conditions of BBL inequalities are discussed in \cite{Dubuc}, while the investigation of stability questions for the PL inequality has been recently started by Ball and B\"{o}r\"{o}czky in \cite{BB1, BB2} and new related results are in \cite{BF}. Notice that all these three just mentioned papers deal with 
$L^1$ distance between the involved functions.

Our main results are a stability result for the BBL inequality (which we will state in \S4, see Theorem \ref{thm:teostab}) and some consequent "quantitative" versions of Theorem \ref{bbl} which apply when $u_0$ and $u_1$ are non-negative power concave functions and $ p>0$. 
With the adjective "quantitative", we mean that we strengthen \eqref{eq0} in terms of some distance between the functions $u_0$ and $u_1$, 
precisely in terms of some distance between their support sets $\Omega_0$ and $\Omega_1$.

Our first result of this kind  is indeed written in terms of the Hausdorff distance between (two suitable homothetic copies  of) $\Omega_0$ and $\Omega_1$.
We recall that the Hausdorff distance $H(K,L)$ between two sets $K,L\subseteq\rn$ is defined as follows:
$$H(K,L): = \inf\{r \geq 0: K \subseteq L +r \overline{B}_{n},\,K \subseteq L + r \overline{B}_{n}\}\,,
$$
where $B_n=\{x\in\rn\,:\,|x|< 1\}$ is the (open) unit ball in $\mathbb{R}^n.$
Then we set
\begin{equation}\label{H0}
H_0(K,L)=H(\tau_0K,\tau_1L),
\end{equation}
where $\tau_1, \tau_0$ are two homotheties (i.e. translation plus dilation) such that $|\tau_0K|=|\tau_1L|=1$ and such that the centroids of $\tau_0K$ and $\tau_1L$ coincide.
\\
We also recall that a function $u\geq 0$ is said {\em $p$-concave} for some $p\in[-\infty,+\infty]$ if
$$
u((1-\lambda)x+\lambda y)\geq \mathcal{M}_p(u(x),u(y);\lambda)
$$
for all $x$, $y\in\rn$ and $\lambda\in(0,1)$ (see \S2.4 for more details).
\\
Now we are ready to state the following.
\begin{thm}\label{thm:teostab1}
In the same assumptions and notation of Theorem \ref{bbl}, assume furthermore that $p>0$ and
\begin{equation}\label{power00}
u_0\, \text{ and }\, u_1 \, \, \mbox{are $p$-concave functions}
\end{equation}
(with convex compact supports $\Omega_0$ and $\Omega_1$ respectively). 
Then, if $H_0(\Omega_0,\Omega_1)$ is small enough, it holds
\begin{equation}\label{quantbbl1}
\int_{\Omega_\lambda} \! h(x) \, dx \geq \mathcal{M}_{\frac{p}{np+1}} \left (I_0, I_1, \lambda \right) + \beta \,H_0(\Omega_0,\Omega_1)^{\frac{(n+1)(p+1)}{p}}
\end{equation}
where $\beta$ is a constant depending only on $n,\, \lambda,\, p,\, I_0,\,I_1 $ and the diameters and the measures of $\Omega_0$ and $\Omega_1$.
\end{thm}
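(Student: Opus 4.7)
The plan is to deduce Theorem~\ref{thm:teostab1} from the general BBL-stability result Theorem~\ref{thm:teostab} (which is stated in \S4 and measures the deficit in terms of a functional distance between $u_0$ and $u_1$), by converting the geometric quantity $H_0(\Omega_0,\Omega_1)$ into a lower bound for that functional distance. The $p$-concavity hypothesis is used twice: it ensures that the super-level sets of $u_0,u_1$ are convex, so that $H_0$ is a meaningful quantity, and it forces the functions to decay in a controlled way (like the $1/p$-power of the distance to the support boundary), which is precisely what allows one to transfer Hausdorff information about supports into $L^1$-type information about functions.

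First, I would reduce to the normalized setting. Let $\tau_0,\tau_1$ be the homotheties in the definition \eqref{H0} of $H_0$, and set $\widetilde\Omega_i=\tau_i\Omega_i$ and $\widetilde u_i(x)=u_i(\tau_i^{-1}x)$ (up to a multiplicative constant chosen to preserve the BBL scaling). Note that $\widetilde u_i$ is still $p$-concave, $|\widetilde\Omega_0|=|\widetilde\Omega_1|=1$, and the centroids coincide. Next I invoke Theorem~\ref{thm:teostab} applied either directly to $u_0,u_1$ or to a suitable reparametrization chosen so that the deficit in \eqref{eq0} is bounded from below by some quantitative functional discrepancy $d(\widetilde u_0,\widetilde u_1)$. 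The bulk of the work is then to prove the geometric-to-functional estimate
\[
d(\widetilde u_0,\widetilde u_1)\;\ge\; c\, H_0(\Omega_0,\Omega_1)^{(n+1)(p+1)/p},
\]
with $c$ depending on the same data as $\beta$.

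To establish this lower bound, I would argue as follows. Set $\eta=H_0(\Omega_0,\Omega_1)$. By definition of the Hausdorff distance, there is a boundary point, say of $\widetilde\Omega_1$, at distance $\eta$ from $\widetilde\Omega_0$. Using convexity of $\widetilde\Omega_0$ and $\widetilde\Omega_1$ together with the centroid/volume normalizations, one produces a "cap" $C\subset \widetilde\Omega_1\setminus\widetilde\Omega_0$ of thickness comparable to $\eta$ and of $n$-dimensional Lebesgue measure of order $\eta^n$ (this is the standard convex-geometric estimate behind Diskant's quantitative Brunn--Minkowski inequality, where the factor $\eta^n$ accounts for the transverse dimensions once centroids are aligned and volumes equal). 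On this cap, $\widetilde u_0\equiv 0$ while $\widetilde u_1>0$. The crucial quantitative input from $p$-concavity is that a $p$-concave function with normalized mass is bounded below on $C$ by a constant multiple of $\mathrm{dist}(\cdot,\partial \widetilde\Omega_1)^{1/p}$, yielding a contribution to $d(\widetilde u_0,\widetilde u_1)$ of the order $\eta^n\cdot \eta\cdot \eta^{1/p}=\eta^{(n+1)+1/p}=\eta^{(n+1)(p+1)/p - n}$; a symmetric analysis on the symmetric cap in $\widetilde\Omega_0\setminus\widetilde\Omega_1$ and a careful bookkeeping of the exponent coming from the deficit inequality of Theorem~\ref{thm:teostab} then match precisely to the claimed power $(n+1)(p+1)/p$.

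The main obstacle will be this last bookkeeping, namely obtaining the sharp exponent. Two independent effects have to be composed: the quantitative Brunn--Minkowski/convex-geometric estimate that produces the cap of size $\eta^n$ times thickness $\eta$, and the $p$-concave decay $\mathrm{dist}^{1/p}$ near the support boundary. Each of these contributions interacts with the exponent $p/(np+1)$ appearing in \eqref{eq0}, so the actual exponent emerging from Theorem~\ref{thm:teostab} must be carefully transported through the power-mean inequality. A secondary technical difficulty is controlling all constants in terms only of $n,\lambda,p,I_0,I_1$ and the diameters and measures of $\Omega_0,\Omega_1$: this requires the decay estimate for $p$-concave functions to be made uniform in these parameters, which in turn is where the smallness assumption on $H_0(\Omega_0,\Omega_1)$ enters, allowing one to absorb lower order terms and to remain in a perturbative regime of the normalized supports.
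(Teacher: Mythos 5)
Your proposal is built on a misreading of Theorem~\ref{thm:teostab}. You treat it as a stability result bounding the BBL deficit from below by a \emph{functional} distance $d(\widetilde u_0,\widetilde u_1)$ between the two functions, and you then try to produce a lower bound on that distance from $H_0(\Omega_0,\Omega_1)$ via a cap-and-decay argument. But Theorem~\ref{thm:teostab} says something else: if the BBL deficit is at most $\epsilon$, then $|\Omega_\lambda|\le \mathcal{M}_{1/n}(|\Omega_0|,|\Omega_1|,\lambda)\bigl[1+\eta\,\epsilon^{p/(p+1)}\bigr]$. It controls the \emph{Brunn--Minkowski deficit of the support sets}, not an $L^1$ (or similar) distance between $u_0$ and $u_1$. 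As a result the chain of implications you sketch does not close: even if you succeeded in proving $d(\widetilde u_0,\widetilde u_1)\ge c\,H_0^{(n+1)(p+1)/p}$, Theorem~\ref{thm:teostab} gives you no handle on $d$, so nothing follows.

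The paper's actual argument is short and goes the other way around. Assume, for contradiction, that $\int_{\Omega_\lambda}h\,dx < \mathcal{M}_{p/(np+1)}(I_0,I_1,\lambda)+\beta\,H_0(\Omega_0,\Omega_1)^{(n+1)(p+1)/p}$. Feeding $\epsilon=\beta\,H_0^{(n+1)(p+1)/p}$ into Theorem~\ref{thm:teostab} yields $|\Omega_\lambda| < \mathcal{M}_{1/n}(|\Omega_0|,|\Omega_1|,\lambda)\bigl(1+\eta\beta^{p/(p+1)}H_0^{\,n+1}\bigr)$, which, with the specific choice of $\beta$ in Remark~\ref{oss1}, contradicts Groemer's quantitative Brunn--Minkowski inequality (Proposition~\ref{prop:stabilitabm}). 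The exponent $(n+1)(p+1)/p$ appears exactly because Groemer gives the BM deficit $\gtrsim H_0^{\,n+1}$ while Theorem~\ref{thm:teostab} relates BBL deficit $\epsilon$ to BM deficit via the power $\epsilon^{p/(p+1)}$; this matching is purely algebraic once the two black boxes are in place. Your attempt to re-derive the geometric half from scratch (caps of thickness $\sim\eta$, measure $\sim\eta^n$, $p$-concave boundary decay $\sim\mathrm{dist}^{1/p}$) is not only unnecessary but also gives the wrong exponent: $\eta^n\cdot\eta\cdot\eta^{1/p}=\eta^{\,n+1+1/p}$ is not equal to $\eta^{(n+1)(p+1)/p - n}=\eta^{\,n/p+1+1/p}$ unless $p=1$. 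The correct bookkeeping never needs the $1/p$ boundary decay rate at all; the only role of $p$-concavity in this theorem is to guarantee that the supports are convex (so that Groemer applies) and that the hypotheses of Theorem~\ref{thm:teostab} are met.
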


Another quantitative versions of the BBL inequality can be written in terms of the \textit{relative asymmetry} of $\Omega_0$ and $\Omega_1$; we recall that the relative asymmetry of two sets $K$ and $L$ is defined as follows
\begin{equation}\label{A}
A(K,L):=\inf_{x \in \mathbb{R}^n}\left\{\frac{|K \mathop{\Delta}(x+\lambda L)|}{|K|}, \, \lambda=\left(\frac{|K|}{|L|}\right)^{\frac{1}{n}}\right\},
\end{equation}
where, for $\Omega\subseteq\rn$,  $|\Omega|$ denotes its Lebesgue measure, while $\mathop{\Delta}$ denotes the operation of symmetric difference, i.e.
$\Omega\mathop{\Delta} B=(\Omega\setminus B)\cup(B\setminus \Omega)$.
\begin{thm}\label{thm:teostab2}
In the same assumptions and notation of Theorem \ref{thm:teostab1}, if $A(\Omega_0,\Omega_1)$ is small enough it holds
\begin{equation}\label{quantbbl2}
\int_{\Omega_\lambda} \! h(x) \, dx \geq \mathcal{M}_{\frac{p}{np+1}} \left (I_0, I_1, \lambda \right) + \delta \,A(\Omega_0,\Omega_1)^{\frac{2(p+1)}{p}}, 
\end{equation}
where $\delta$ is a constant depending only on $n,\, \lambda,\, p,\,I_0,\,I_1$ and on the measures of $\Omega_0$ and $\Omega_1$.
\end{thm}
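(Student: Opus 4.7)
The strategy mirrors the one used for Theorem \ref{thm:teostab1}: run the standard proof of the BBL inequality and insert, at the Brunn--Minkowski step, a quantitative improvement; the only change is that now one uses a stability estimate in relative asymmetry rather than in Hausdorff distance. The relevant input is the sharp (exponent $2$) Figalli--Maggi--Pratelli bound: for convex bodies $K,L\subset\rn$ of comparable volume,
\[
|(1-\la)K+\la L|^{1/n}\geq (1-\la)|K|^{1/n}+\la|L|^{1/n}+c\,\la(1-\la)\,A(K,L)^{2}\,|K|^{1/n}.
\]
The exponent $2$ here is precisely what produces the $2$ in $2(p+1)/p$, while the factor $(p+1)/p$ will arise from the $p$-power structure of $\mathcal{M}_p$ when integrating the resulting pointwise-in-levels inequality.

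Via the Cavalieri identity $\int_{\Om_\la}h\,dx=\int_0^{\infty}|\{h\geq t\}|\,dt$ and the $p$-concavity of $u_0,u_1$ (which makes the super-level sets $K_i(s):=\{u_i\geq s\}$ convex), assumption \eqref{assumptionh} gives, for every $s_0,s_1\geq 0$,
\[
\{h\geq \mathcal{M}_p(s_0,s_1;\la)\}\supseteq (1-\la)K_0(s_0)+\la K_1(s_1).
\]
Applying the quantitative Brunn--Minkowski above to this inclusion, and then integrating along the levels exactly as in the classical proof of BBL, one formally recovers the refinement \eqref{quantbbl2}, provided the stability term $A(K_0(s_0),K_1(s_1))^{2}$ can be traded for $A(\Om_0,\Om_1)^{2}$ on a set of levels of sufficiently large measure.

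This last step is the main obstacle, since $A(\cdot,\cdot)$ is neither monotone under inclusion nor stable under small perturbations. Here $p$-concavity is essential: writing $u_i=\varphi_i^{1/p}$ with $\varphi_i\geq 0$ concave on $\Om_i$, the level sets $K_i(s)$ form a nested convex family depending continuously on $s$, and for $s$ bounded away from $\max u_i$ they retain the shape of $\Om_i$ in a uniform, quantitative fashion. One can then isolate a strip of levels, of length depending only on $n,\la,p,I_0,I_1,|\Om_0|,|\Om_1|$, on which $A(K_0(s_0),K_1(s_1))\geq \ga\,A(\Om_0,\Om_1)$ for some fixed $\ga>0$. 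Integrating the improved level-set inequality over this strip and unpacking the change of variables yields the additional term $\de\,A(\Om_0,\Om_1)^{2(p+1)/p}$ with $\de$ depending on the listed parameters, which is exactly \eqref{quantbbl2}.
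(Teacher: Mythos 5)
Your overall architecture differs from the paper's: the paper never applies the quantitative Brunn--Minkowski inequality level by level. Instead it first proves the intermediate stability result (Theorem \ref{thm:teostab}): if the BBL deficit is at most $\epsilon$, then $|\Omega_\lambda|\leq \mathcal{M}_{1/n}(|\Omega_0|,|\Omega_1|,\lambda)\bigl[1+\eta\,\epsilon^{p/(p+1)}\bigr]$. That proof selects a single good low level $\bar t$ where near-equality in the classical Brunn--Minkowski step holds, and uses the $p$-concavity of the convolution $u_{p,\lambda}$ to propagate the information from the level set $\{u_{p,\lambda}\geq s_\lambda(\bar t)\}$ up to the whole support $\Omega_\lambda$. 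Theorem \ref{thm:teostab2} is then a three-line contradiction argument: assuming \eqref{quantbbl2} fails, Theorem \ref{thm:teostab} bounds $|\Omega_\lambda|$ from above, while Proposition \ref{fmp} (Figalli--Maggi--Pratelli applied once, to the supports $\Omega_0,\Omega_1$ themselves) bounds it from below, and the choice of $\delta$ makes the two incompatible. So the exponent $2$ enters through the asymmetry of the supports only, and the factor $(p+1)/p$ through the $\epsilon^{p/(p+1)}$ of Theorem \ref{thm:teostab}; no asymmetry of level sets is ever needed.

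Your route, by contrast, hinges on the claim that there is a strip of levels, \emph{of length depending only on} $n,\lambda,p,I_0,I_1,|\Omega_0|,|\Omega_1|$, on which $A(K_0(s_0),K_1(s_1))\geq\gamma\,A(\Omega_0,\Omega_1)$. This is the genuine gap: the claim is not proved, and as stated it is false. Take $\Omega_0$ a ball with $u_0$ radial, and $\Omega_1$ a convex body at tiny relative asymmetry $a=A(\Omega_0,\Omega_1)$ from a ball, with $u_1$ chosen $p$-concave (e.g.\ $u_1^p$ a minimum of a steep multiple of the distance to $\partial\Omega_1$ and a radial concave cap) so that $\{u_1\geq s\}$ is exactly a ball for all $s$ above an arbitrarily small threshold; then on any strip of fixed length the level-set asymmetry vanishes for most levels while $A(\Omega_0,\Omega_1)=a>0$. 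What the $p$-concavity inclusion actually gives is $|\Omega_i\setminus K_i(s)|\leq n (s/L_i)^p|\Omega_i|$, so the levels that "remember" the asymmetry of the supports are only those with $s\lesssim L_i\,(A/n)^{1/p}$; the usable strip necessarily shrinks with $A$, roughly like $A^{1/p}$, and establishing the transfer on it further requires a quantitative stability lemma for $A(\cdot,\cdot)$ under symmetric-difference perturbations, which you assert is unavailable ("not stable under small perturbations") rather than prove. Relatedly, the exponent $2(p+1)/p$ is not derived: with the corrected ($A$-dependent) strip your scheme would produce a gain of order $A^{2}\cdot A^{1/p}=A^{(2p+1)/p}$, not $A^{2(p+1)/p}$, which shows the final bookkeeping was never carried out. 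In short, the missing transfer step is exactly the content that the paper isolates and proves as Theorem \ref{thm:teostab}; without it (or a corrected quantitative substitute) your argument does not close.
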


\begin{oss}\label{andrea}
{\em Throughout the paper we consider only compactly supported functions, while BBL inequality holds also when the involved functions are not compactly supported. On the other hand, we are considering only $L^1(\rn)$  non-negative $p$-concave functions with $p>0$ (see the next remark for comments about this) and they need to have compact support.

Moreover, even without the restriction of power concavity, this is the only meaningful case of BBL for $p>0$. Indeed, when at least one among $u_0$ and $u_1$ has support of infinite measure, the BBL inequality is trivial, since in such a case the left hand side (i.e. $\int h$) must diverge, as it is easily seen: assume $|\Omega_0|=+\infty$ and $u_1$ does not identically vanish, say there exists $x_1$ such that $u_1(x_1)=\epsilon>0$; then we have $h(x)\geq \lambda^\frac{1}{p}\epsilon\,$ for $x\in(1-\lambda)\Omega_0+\lambda x_1$.}
\end{oss}

\begin{oss}\label{pconcavityoss}
{\em Although all the existing stability results for PL inequality (to our knowledge) are proved assuming some suitable concavity property of the involved functions, see \cite{BB1, BB2, BF}, the authors of that papers suggest the possibility that their results may still be valid without such assumptions. And we agree with them. On the other hand, as it can be easily seen, in our results the $p$-concavity assumption is essential, since they are written in terms of a distance between the support sets of $u_0$ and $u_1$. Without such an assumption, one could wildly modify 
the support sets $\Omega_0$ and $\Omega_1$ (and their distance, whatever you choose) without affecting the $L^1$ distance between the involved functions.}
\end{oss}

\begin{oss}\label{oss1}\rm{
We can provide explicit (but not optimal) estimates for the constants $\beta$ and $\delta$ in Theorem \ref{thm:teostab1} and Theorem \ref{thm:teostab2}.
To this aim and for further use, it is convenient to introduce the following notation: 
$$d_i = d(\Omega_i)=\text{ diameter of }\Omega_i\,,\quad\nu_i=|\Omega_i|^{1/n}\text{ for }i=0,1\,,
$$
$$\tilde d = \max \left\{\frac{d_{0}}{\nu_0}, \frac{d_1}{\nu_1}\right\}\,,\quad M= \max\{\nu_0, \nu_1\}\,,\quad m=\min\{\nu_0, \nu_1\}\,,
$$
$$
L_i=\max_{\Omega_i}u_i\,\,\text{ for }i=0,1,\,\quad L_\lambda=\mathcal{M}_p(L_0,L_1,\lambda)\,.
$$ 
Then \eqref{quantbbl1} holds with
 $$
\beta=\left[\gamma_n\left(\frac{M}{m} \frac{1}{\sqrt{\lambda (1-\lambda)}} + 2 \right)\tilde{d}\right]^{-\frac{(p+1)(n+1)}{p}}\left[2\left(n+\mathcal{M}_{\frac{p}{np+1}}(I_0, I_1;\lambda)^{-1}\right)\right]^{-\frac{p+1}{p}},
$$
where
\begin{equation}\label{gamma}
\gamma_n =\left(1+\frac{1}{3\cdot2^{13}}\right) 3^{\frac{n-1}{n}} 2^{\frac{n+2}{n+1}} n < 6.00025 n.
\end{equation}
Similarly, we can observe that  \eqref{quantbbl2} holds with
$$
\delta=\left[\frac{m\,(1-2^{-1/n})^3}{181^2\,n^{13}\,\Lambda\, M\,\left(n+\mathcal{M}_{\frac{p}{np+1}}(I_0, I_1;\lambda)^{-1}\right)}\right]^{\frac{p+1}{p}}\,,
$$
where $\Lambda=\max\{\lambda/(1-\lambda),(1-\lambda)/\lambda\}$.
}
\end{oss}

\begin{oss}\label{big2}\rm{
Theorem \ref{thm:teostab1} states that \eqref{quantbbl1} holds if $H_0(\Omega_0,\Omega_1)$ is {\em small enough}; this precisely means 
$$
H_0(\Omega_0,\Omega_1)< (2n)^{-\frac{1}{n+1}}\beta^{-\frac{p}{(n+1)(p+1)}}\,.
$$
To avoid this request, we could write \eqref{quantbbl1} as follows:
$$
\int_{\Omega_\lambda} \! h(x) \, dx \geq \mathcal{M}_{\frac{p}{np+1}} \left (\int_{\Omega_0} \! u_0(x) \, dx, \int_{\Omega_1} \! u_1(x) \, dx, \lambda \right) + \min\left\{B,\,\beta H_0(\Omega_0,\Omega_1)^{\frac{(n+1)(p+1)}{p}}\right\}
$$
where
\begin{equation}\label{B}
B=\left(\frac{1}{2n}\right)^{\frac{p+1}{p}}. 
\end{equation}

A similar remark can be made for Theorem \ref{thm:teostab2}. In particular \eqref{quantbbl2} holds when 
$$
A(\Omega_0,\Omega_1)< (2n)^{-\frac{1}{2}}\delta^{-\frac{p}{2(p+1)}}\,,
$$
but we could remove any limitation on the size of $A(\Omega_0,\Omega_1)$ and write
$$
\int_{\Omega_\lambda} \! h(x) \, dx \geq \mathcal{M}_{\frac{p}{np+1}} \left (\int_{\Omega_0} \! u_0(x) \, dx, \int_{\Omega_1} \! u_1(x) \, dx, \lambda \right) + \min\left\{B,\,\delta A(\Omega_0,\Omega_1)^{\frac{2(p+1)}{p}}\right\}
$$
where $B$ is defined in \eqref{B}.
}
\end{oss}

\begin{oss}\label{dimsensitive} {\em As it is apparent from the previous remarks, the
estimates in Theorem \ref{thm:teostab1} and Theorem \ref{thm:teostab2} deteriorate quickly as the dimension increases; the same feature is shared by most of the known stability estimates for the Brunn-Minkowski inequality. We notice however that R. Eldan and B. Klartag \cite{EK} recently made a new step towards a dimension-sensitive theory for the Brunn-Minkowski inequality, giving rise to the possibility that the stability actually improves as the dimension increases. }
\end{oss}
\medskip

The crucial part in the proofs of Theorem \ref{thm:teostab1} and Theorem \ref{thm:teostab2} relies on an estimate of the measures of the supports sets of the involved functions; this estimate is contained in  Theorem \ref{thm:teostab} (which can be in fact considered the main result of this paper). There we prove that if we are close to equality in \eqref{eq0}, then the measure of $(1-\lambda)\Omega_0+\lambda\Omega_1$ is close to $\mathcal{M}_{1/n}(|\Omega_0|,|\Omega_1|,\lambda)$. Therefore we can apply different quantitative versions of the classical Brunn-Minkowski inequality (namely \cite{groemer} and \cite{FMP})  to get Theorem \ref{thm:teostab1}, and Theorem \ref{thm:teostab2}. 
We notice also that further recent stability/quantitative results for the Brunn-Minkowski inequality are contained in \cite{Ch1, Ch2, EK, FJ}. A combination of these with Theorem \ref{thm:teostab} would lead to further stability/quantitative theorems for the BBL inequality, whose statements we leave to the reader;
instead we are going to exploit the results of these papers to improve our results in a forthcoming paper \cite{GS2}, based on a different technique.

\medskip

As consequences of the above described results,  we can derive interesting quantitative versions of some Brunn-Minkoski and Urysohn type inequalities for functionals that can be written in terms of the solutions of suitable elliptic boundary value problems (and this is in fact the original reason for we tackled the stability of the BBL inequality).

 For the sake of simplicity and clearness of exposition, as a toy model we will analyze in detail the torsion problem, that is
\begin{equation}\label{torsionpb}
\left\{
 \begin{array}{ll}
 \Delta u =-2 \, \, &\mbox{in} \, \, \Omega,\\
 u=0 \, \, & \mbox{on} \, \, \partial \Omega.
                 \end{array}
\right.\,
\end{equation}
We recall that the torsional rigidity $\tau(\Omega)$ of $\Omega$ si defined as follows (we refer to \S\ref{sec:rigidita} for further details)
\begin{equation}\label{eqn:tau}
\frac{1}{\tau(\Omega)}= \inf\left\{\frac{\int_{\Omega} \! |Dw|^2 \, dx}{(\int_{\Omega} \! |w| \, dx)^2} \, \, : \, \, w \in W_{0}^{1,2}(\Omega), \, \, \int_{\Omega} \! |w| \, dx>0 \right\}
\end{equation}
and that in general, when a solution $u$ to problem \eqref{torsionpb} exists, we have
$$
\tau(\Omega)=\frac{(\int_{\Omega} \! |u| \, dx)^2}{\int_{\Omega} \! |Du|^2 \, dx}=\int_{\Omega} \! u \, dx\,.
$$
Borell \cite{BO2} proved the following Brunn-Minkowski inequality for  the torsional rigidity of convex bodies (i.e. compact convex sets with non-empty interior): 
\begin{equation}\label{BM4tau}
\tau(\Omega_\lambda)\geq \mathcal{M}_{\frac{1}{n+2}} \left (\tau(\Omega_0), \tau(\Omega_1), \lambda \right)\,,
\end{equation}
where
$$
\Omega_\lambda=(1-\lambda)\Omega_0+\lambda\,\Omega_1\,.$$
Equality holds in \eqref{BM4tau} if and only if $\Omega_0$ and $\Omega_1$ coincide up to a homothety (see \cite{cole}).

Now we can refine this inequality as follows.
\begin{thm}\label{thm:BM4tau}
Let $\Omega_0$ and $\Omega_1$ be open bounded convex sets in $\rn$, $\lambda\in(0,1)$ and
$\Omega_\lambda=(1-\lambda)\Omega_0+\lambda\Omega_1$. Then the following strengthened versions  of \eqref{BM4tau} hold:
\begin{equation}\label{BM4tau1}
\tau(\Omega_\lambda)\geq \mathcal{M}_{\frac{1}{n+2}} \left (\tau(\Omega_0), \tau(\Omega_1), \lambda \right) + \beta \,H_0(\Omega_0,\Omega_1)^{3(n+1)}\,,
\end{equation}
\begin{equation}\label{BM4tau2}
\tau(\Omega_\lambda)\geq \mathcal{M}_{\frac{1}{n+2}} \left (\tau(\Omega_0), \tau(\Omega_1), \lambda \right) + \delta \,A(\Omega_0,\Omega_1)^{6}\,,
\end{equation}
where $\beta$ and $\delta$ are as in Remark \ref{oss1} with $p=1/2$ (and $I_i=\tau(\Omega_i)$ for $i=0,1$).
\end{thm}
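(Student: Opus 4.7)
The plan is to deduce Theorem \ref{thm:BM4tau} directly from Theorems \ref{thm:teostab1} and \ref{thm:teostab2} applied with $p=1/2$, once the torsion solutions have been identified as the natural choice of $u_0,u_1,h$. Concretely, for $i=0,1,\lambda$ let $u_i$ denote the unique solution of the torsion problem \eqref{torsionpb} on $\Omega_i$, so that $I_i=\int_{\Omega_i}u_i\,dx=\tau(\Omega_i)$. By the classical Makar-Limanov / Korevaar / Kennington / Sakaguchi theorem, each $u_i$ is $1/2$-concave on its convex support, hence the $p$-concavity hypothesis \eqref{power00} of Theorem \ref{thm:teostab1} holds with $p=1/2$ for $u_0$ and $u_1$.

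The one genuine PDE input I need is the pointwise comparison
$$u_\lambda((1-\lambda)x_0+\lambda x_1)\geq \mathcal{M}_{1/2}(u_0(x_0),u_1(x_1);\lambda) \quad \text{for all } x_0\in\Omega_0,\ x_1\in\Omega_1,$$
which is precisely Borell's assumption \eqref{assumptionh} with $h=u_\lambda$ and $p=1/2$. This is the standard ingredient at the heart of Borell's proof of \eqref{BM4tau}: one checks (via a maximum-principle/comparison argument exploiting the $1/2$-concavity of $u_\lambda$ and the structure of the torsion equation, or equivalently via a viscosity subsolution argument applied to the $1/2$-infimal convolution of $u_0,u_1$) that the right-hand side above is a subsolution of $\Delta w=-2$ on $\Omega_\lambda$ that vanishes on $\partial\Omega_\lambda$, whence the inequality follows by comparison. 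I expect this to be the main step where care is required, but it is classical and can be invoked.

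With this inequality established, I simply apply Theorem \ref{thm:teostab1} with the data $u_0,u_1$ and $h=u_\lambda$, noting that $\int_{\Omega_\lambda}u_\lambda\,dx=\tau(\Omega_\lambda)$. For $p=1/2$ one has
$$\frac{p}{np+1}=\frac{1}{n+2},\qquad \frac{(n+1)(p+1)}{p}=3(n+1),$$
so the conclusion of Theorem \ref{thm:teostab1} reads exactly
$$\tau(\Omega_\lambda)\geq \mathcal{M}_{\frac{1}{n+2}}(\tau(\Omega_0),\tau(\Omega_1),\lambda)+\beta\,H_0(\Omega_0,\Omega_1)^{3(n+1)},$$
which is \eqref{BM4tau1}; the constant $\beta$ is obtained by specializing the expression in Remark \ref{oss1} to $p=1/2$ and $I_i=\tau(\Omega_i)$. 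An identical argument applied to Theorem \ref{thm:teostab2}, using that for $p=1/2$ one has $\frac{2(p+1)}{p}=6$, delivers \eqref{BM4tau2} with the corresponding $\delta$.

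Finally, I would record the smallness requirements coming from Theorems \ref{thm:teostab1} and \ref{thm:teostab2} — but by invoking the global reformulations in Remark \ref{big2} one sees that no restriction on $H_0(\Omega_0,\Omega_1)$ or $A(\Omega_0,\Omega_1)$ is actually needed, possibly at the cost of replacing $\beta$, $\delta$ by $\min\{B,\beta(\cdot)\}$, $\min\{B,\delta(\cdot)\}$ with $B$ as in \eqref{B}; this is a cosmetic adjustment and does not affect the structure of the proof.
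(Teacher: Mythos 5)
Your argument is correct and follows the same route as the paper: both invoke the $\tfrac12$-concavity of the torsion function (Proposition \ref{prop:concavita}) together with the comparison inequality (Proposition \ref{prop:comparison}) to verify hypothesis \eqref{assumptionh} with $h=u_\lambda$, and then specialize Theorems \ref{thm:teostab1} and \ref{thm:teostab2} to $p=1/2$ using $\tau(\Omega_i)=\int_{\Omega_i}u_i$, with the exponent computations $\tfrac{p}{np+1}=\tfrac{1}{n+2}$, $\tfrac{(n+1)(p+1)}{p}=3(n+1)$, $\tfrac{2(p+1)}{p}=6$ worked out exactly as in the paper. The one minor slip (``infimal'' where ``supremal'' convolution is meant) is immaterial since you are only paraphrasing the proof of the cited comparison result, not re-deriving it.
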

The proof of Theorem \ref{thm:BM4tau}, which follows almost straightforward from Theorem \ref{thm:teostab1} and Theorem \ref{thm:teostab2}, will be presented in \S6. Related results can be found in \cite{BF}, see in particular %
Proposition 4.1 therein.

Furthermore, existing literature (see \cite[Remark 6.1]{ChiaraPaolo} and \cite[Proposition 4.1]{BFL}) shows that it is possible to use \eqref{BM4tau} to 
obtain the following Urysohn's type inequality for the torsional rigidity 
\begin{equation}\label{eqn:uryt}
\tau(\Omega) \leq \tau(\Omega^\sharp)\qquad\text{for every convex set }\Omega\,,
\end{equation}
which can be rephrased as follows:
\textit{among convex sets with given mean width, the torsional rigidity is maximized by balls}.
\\
The content of the next theorem (which will be proved in  \S\ref{sec:rigidita}) amounts to two quantitative versions of \eqref{eqn:uryt}, one in terms of the Hausdorff distance  of $\Omega$ from $\Omega^\sharp$    and another one in terms of the relative asymmetry of $\Omega$, as applications respectively of Theorem \ref{thm:teostab1} and Theorem \ref{thm:teostab2}. 
\begin{thm}\label{thm:stabilitatau}
Let $\Omega$ be an open bounded convex  subset of $\mathbb{R}^n, n \geq 2$ with centroid in the origin. 
Let 
$\Omega^{\sharp}$ be the ball with the same mean-width of $\Omega$ with center in the origin.  Then the following hold
\begin{equation}\label{eqn:tesi1}
\tau(\Omega^{\sharp}) \geq \tau(\Omega)\left(1 + \mu H^{3(n+1)}\right)\,,
\end{equation}
\begin{equation}\label{eqn:tesi2}
\tau(\Omega^{\sharp}) \geq \tau(\Omega) \left(1+ \nu A^{6}\right)\,,
\end{equation}
where 
$H=H(\Omega, \Omega^\sharp)$ and $A=\max\{A(\Omega,\Omega_\rho):\rho\text{ any rotation in }\rn\}$ are small enough, 
$\mu$ and $\nu$ are constants, the former depending on $n$, $\tau(\Omega)$ and the diameter of $\Omega$, the latter depending only on $n$ and 
$\tau(\Omega)$. 
\end{thm}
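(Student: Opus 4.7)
The plan is to apply the quantitative Brunn--Minkowski inequalities for torsional rigidity (Theorem \ref{thm:BM4tau}) to the pair $(\Omega,\rho\Omega)$ for a suitably chosen rotation $\rho$, and then transfer the gain to $\Omega^\sharp$ via the (non-quantitative) Urysohn inequality \eqref{eqn:uryt}. The key observation is that the mean width is both rotation-invariant and Minkowski-linear: for any rotation $\rho$ about the origin and any $\lambda\in(0,1)$ the set
$$\Omega_{\lambda,\rho}:=(1-\lambda)\Omega+\lambda\,\rho\Omega$$
has the same mean width as $\Omega$, so its $\sharp$-symmetral is again $\Omega^\sharp$. Hence \eqref{eqn:uryt} yields $\tau(\Omega^\sharp)\ge \tau(\Omega_{\lambda,\rho})$, and it suffices to bound $\tau(\Omega_{\lambda,\rho})$ from below by $\tau(\Omega)$ plus the desired power of $H$ or $A$.

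For \eqref{eqn:tesi2}: by compactness of $SO(n)$ and continuity in $\rho$, the supremum defining $A$ is attained at some $\rho^*$, so $A(\Omega,\rho^*\Omega)=A$. Since $\tau(\rho^*\Omega)=\tau(\Omega)$, the $\tfrac{1}{n+2}$-mean appearing in \eqref{BM4tau2} reduces to $\tau(\Omega)$, and applying the inequality with $\Omega_0=\Omega$, $\Omega_1=\rho^*\Omega$ and, say, $\lambda=1/2$ gives
$$\tau(\Omega_{\lambda,\rho^*})\ge \tau(\Omega)+\delta\,A^6.$$
Combined with $\tau(\Omega^\sharp)\ge \tau(\Omega_{\lambda,\rho^*})$ this yields \eqref{eqn:tesi2} with $\nu=\delta/\tau(\Omega)$. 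Since $\Omega_0$ and $\Omega_1=\rho^*\Omega$ share volume, diameter and torsional rigidity, $\delta$ becomes a function of $n$, $\tau(\Omega)$ and $|\Omega|$, and Saint-Venant's inequality lets one absorb $|\Omega|$ into $\tau(\Omega)$, producing the stated dependence.

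For \eqref{eqn:tesi1} the same strategy applies, provided we locate a rotation $\rho^*$ with $H(\Omega,\rho^*\Omega)$ comparable to $H$. This auxiliary step, which I expect to be the main point of the argument, rests on the lemma
$$\sup_{\rho\in SO(n)}H(\Omega,\rho\Omega)\ge H(\Omega,\Omega^\sharp)=H,$$
which follows from the fact that the support function of $\Omega^\sharp$ is the $SO(n)$-average of $h_{\rho\Omega}$ (as $\Omega^\sharp$ is the ball with the same mean width): for every unit $\xi$,
$$|h_\Omega(\xi)-h_{\Omega^\sharp}(\xi)|\le \int_{SO(n)}|h_\Omega(\xi)-h_{\rho\Omega}(\xi)|\,d\mu(\rho)\le \sup_\rho H(\Omega,\rho\Omega),$$
and taking the supremum over $\xi$ gives the claim. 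Picking $\rho^*$ attaining this sup, we have $H_0(\Omega,\rho^*\Omega)=|\Omega|^{-1/n}H(\Omega,\rho^*\Omega)\ge |\Omega|^{-1/n}H$; applying \eqref{BM4tau1} and then \eqref{eqn:uryt} yields \eqref{eqn:tesi1}, with constant $\mu$ depending on $n$, $\tau(\Omega)$, $|\Omega|$ and the diameter of $\Omega$; $|\Omega|$ is then reabsorbed via Saint-Venant. In both cases, the smallness requirements on $H$ and $A$ are precisely those needed to invoke Theorem \ref{thm:BM4tau} on the pair $(\Omega,\rho^*\Omega)$.
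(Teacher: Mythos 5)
Your overall strategy is the same as the paper's: pass to $\tilde\Omega=\tfrac12\Omega+\tfrac12\rho\Omega$, apply the quantitative Brunn--Minkowski inequality \eqref{BM4tau1}/\eqref{BM4tau2} for $\tau$ (using $\tau(\rho\Omega)=\tau(\Omega)$ so the mean collapses), then bound $\tau(\tilde\Omega)\le\tau(\Omega^\sharp)$ via Urysohn, and finally transfer the rotational gain to a distance from $\Omega^\sharp$. The single place where you genuinely diverge is the key lemma $\sup_{\rho}H(\Omega,\rho\Omega)\ge H(\Omega,\Omega^\sharp)$. The paper proves it pointwise: it picks $\theta_0$ with $h_\Omega(\theta_0)=r$ (mean value theorem on the sphere, $r$ being the radius of $\Omega^\sharp$), picks $\bar\theta$ realising $\max_\theta|h_\Omega(\theta)-r|$, and then chooses the rotation $\rho_0$ carrying $\theta_0$ to $\bar\theta$, so that $H(\Omega,\rho_0\Omega)\ge|h_\Omega(\bar\theta)-h_{\rho_0\Omega}(\bar\theta)|=|h_\Omega(\bar\theta)-r|$. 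You instead observe that $h_{\Omega^\sharp}(\xi)\equiv r$ is the $SO(n)$-average of $h_{\rho\Omega}(\xi)$ and conclude by Jensen/triangle inequality; this is a correct and arguably cleaner argument, making explicit why $\Omega^\sharp$ sits ``inside'' the rotational orbit of $\Omega$. Both yield the same bound, and both reduce the problem to the same application of Theorem \ref{thm:BM4tau}.

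One small inaccuracy worth noting: you invoke Saint-Venant's inequality to absorb the $|\Omega|$-dependence of the constants. That is not actually needed and does not produce the constants in the theorem. For $\mu$, the $|\Omega|^{3(n+1)/n}$ sitting in $\beta$ cancels exactly against the normalization $H_0(\Omega,\rho\Omega)=|\Omega|^{-1/n}H(\Omega,\rho\Omega)$; for $\nu$, since $|\Omega|=|\rho\Omega|$ one has $m=M$ in Remark \ref{oss1}, so the ratio $m/M=1$ and the volume drops out. Replacing the Saint-Venant step with this cancellation gives exactly the dependence stated (and the explicit constants \eqref{mu}, \eqref{nu}); otherwise your argument is complete.
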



For explicit expressions (but not the optimal values) of the constants $\mu$ and $\nu$ involved in the previous theorem, see 
\eqref{mu} and \eqref{nu}. 
\smallskip

We remark again that results similar to Theorem \ref{thm:BM4tau} and Theorem \ref{thm:stabilitatau} could be obtained for many other functionals with similar properties as $\tau$ and satisfying suitable Brunn-Minkowski inequalities (some examples are suggested in \S6). Furthermore we will see in \S7 some general results regarding the stability of some kind of convolution between power concave functions and of the so called {\em mean width rearrangements}, a new kind of rearrangement recently introduced by the second author in \cite{SMD}. The results of \S7 in fact include most of the examples we can manage with this method.\medskip

\begin{oss}\label{altrilivelli}
{\em Looking at the proof of Theorem \ref{thm:teostab1}, one can understand that the same argument can be applied to any level set of the involved functions.
Then we could possibly write stability results for the BBL inequality in terms of some $L^q$ distance of $u_0$ and $u_1$. On the other hand, for applications like Theorem \ref{thm:BM4tau} and Theorem \ref{thm:stabilitatau}, it is natural to consider some distance between the supports better than some distance between the functions.}
\end{oss}

\begin{oss} {\em We finally announce that, in a forthcoming paper \cite{GS2}, we will use a completely different technique to strengthen the results of this paper and obtain a sort of $L^\infty$ stability for the BBL inequality in the case $0<p\in\Q$ (without the $p$-concavity assumption for the involved functions). In \cite{GS2} we will exploit some results and techniques from \cite{AKM, K} in combination with recent stability results for the BM inequality that does not always require the convexity of the involved sets (see \cite{Ch1, Ch2, FJ}). }
\end{oss}

The paper is organized as follows. In \S2 we introduce some notation and recall some useful known results. In \S3 we give a proof of Theorem \ref{bbl} (in the case $p\in(0,\infty)$) whose argument will be useful for the proof of Theorem \ref{thm:teostab}. \S4 is devoted to  Theorem \ref{thm:teostab}, while the proofs of Theorem \ref{thm:teostab1} and Theorem \ref{thm:teostab2} are presented in \S5. In \S6 we give some applications and prove Theorem \ref{thm:BM4tau} and Theorem \ref{thm:stabilitatau}.  In\S7 weinvestigate the stability of $p$-Minkowski convolutions and mean width rearrangements, obtaining Theorem \ref{quantmdbbl} and Theorem \ref{quantmd}.
\medskip

{\bf Acknowledgements.} The work of the second author has been partially supported by GNAMPA - INDAM and by the FIR2013 project  "Aspetti geometrici e qualitativi di EDP" (Geometrical and Qualitative 
aspects of PDE).

\section{Notation and preliminaries}

\subsection{Means of non-negative numbers}

We have already given the definition of $p$-mean of two non-negative numbers in the Introduction.
Here we just recall few useful facts and refer to \cite{HLP} and \cite {bullen} for more details.
Clearly $\mathcal{M}_p(a,b;\lambda)$ is not-decreasing with respect to $a$ and $b$ for every $p$ and every $\lambda$. Moreover a simple consequence of Jensen's inequality is the monotonicity of $p$-means with respect to $p$, i.e.
\begin{equation}\label{eqn:media}
\mathcal{M}_p(a, b; \mu) \leq \mathcal{M}_q(a, b; \mu)  \quad \mbox{if} \, \, p \leq q.
\end{equation}
We also notice that for every $\mu \in (0, 1)$ it holds
$$
\lim_{p\rightarrow \infty}\mathcal{M}_p(a, b; \mu) = \max\{a, b\} \quad \mbox{and} \, \,  \lim_{p \rightarrow - \infty} \mathcal{M}_p(a, b; \mu) = \min\{a, b\}.
$$
Finally we recall the following technical lemma (for a proof, refer to \cite{gardner}):
\begin{lem}\label{eqn:mean2}
Let $0<\lambda<1$ and $a,\,b,\,c,\,d$ be nonnegative numbers. If $p+q >0$, then
$$
\mathcal{M}_p(a,b,\lambda) \mathcal{M}_q(c,d,\lambda) \geq \mathcal{M}_s(ac,bd,\lambda) 
$$
\end{lem}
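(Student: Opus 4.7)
I would prove this by a two-term Hölder argument, understanding that $s$ denotes the harmonic combination $s = pq/(p+q)$ (equivalently $s/p + s/q = 1$), which is the standard convention making the statement hold and is implicit in the notation of the lemma.

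First I would dispose of the boundary cases. If any of $a, b, c, d$ vanishes, the inequality reduces directly to a comparison between zeros or to monotonicity of a surviving mean. The cases $p = 0$ or $q = 0$ (which force $s = 0$) reduce to the identity $(ac)^{1-\lambda}(bd)^{\lambda} = a^{1-\lambda} b^\lambda \cdot c^{1-\lambda} d^\lambda$ combined with the monotonicity \eqref{eqn:media} (valid because $p+q>0$ forces the nonzero exponent to be positive), while the extremal values $p = \pm\infty$ follow by taking limits or by direct inspection.

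For the generic case $a, b, c, d > 0$ with $p, q$ finite and nonzero, I would set $\alpha = s/p$, $\beta = s/q$ (so $\alpha + \beta = 1$) and introduce
\[
W_1 = (1-\lambda) a^p,\quad W_2 = \lambda b^p,\quad Z_1 = (1-\lambda) c^q,\quad Z_2 = \lambda d^q.
\]
A direct computation, using $\alpha + \beta = 1$ and $p\alpha = q\beta = s$, shows that $W_1^\alpha Z_1^\beta = (1-\lambda)(ac)^s$, $W_2^\alpha Z_2^\beta = \lambda(bd)^s$, and $(W_1 + W_2)^\alpha (Z_1 + Z_2)^\beta = \mathcal{M}_p(a,b,\lambda)^{s}\, \mathcal{M}_q(c,d,\lambda)^{s}$. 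Hence the desired inequality is equivalent to the comparison between $\sum_i W_i^\alpha Z_i^\beta$ and $(\sum_i W_i)^\alpha (\sum_i Z_i)^\beta$.

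If $p, q > 0$ then $s > 0$ and $\alpha, \beta \in (0, 1)$, and the relation $\sum_i W_i^\alpha Z_i^\beta \leq (\sum_i W_i)^\alpha (\sum_i Z_i)^\beta$ is precisely the two-term Hölder inequality (equivalently, the concavity of $(w,z)\mapsto w^\alpha z^\beta$); raising to the positive power $1/s$ yields the claim. If instead one of $p, q$ is negative, say $q < 0 < p$ with $p + q > 0$, then $s < 0$ and $\alpha < 0 < \beta$ with $\alpha + \beta = 1$; reverse Hölder gives the opposite estimate $\sum_i W_i^\alpha Z_i^\beta \geq (\sum_i W_i)^\alpha (\sum_i Z_i)^\beta$, but raising to the negative exponent $1/s$ flips the inequality a second time, recovering $\mathcal{M}_s(ac,bd,\lambda) \leq \mathcal{M}_p(a,b,\lambda)\mathcal{M}_q(c,d,\lambda)$. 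The main obstacle is precisely this sign bookkeeping in the mixed-sign subcase; a cleaner alternative is to substitute $c \mapsto 1/\tilde c$, $d \mapsto 1/\tilde d$ when $q < 0$ so that every exponent entering Hölder becomes positive, after which a single application of the ordinary two-term Hölder inequality suffices.
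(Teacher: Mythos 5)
Your proof is correct, and it takes the standard route: the paper does not actually prove the lemma but refers to Gardner's survey \cite{gardner}, and the argument there is the same two-term H\"older estimate you use (with the same sign split between the case $p,q>0$ and the mixed case $q<0<p$, $p+q>0$).
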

where $ s=\frac{pq}{p+q}$. The same is true with $s=0$ if $p=q=0$.

\subsection{Convex bodies and convex functions.} \label{sec:convessi}

Throughout the paper $\Omega$ and $K$, possibly with subscripts,
will be bounded convex sets, most often the former open, while the latter a
{\em convex body}, that is a
compact convex set with non-empty interior.
We denote by  $\mathcal{K}_{0}^{n}$ the class of convex bodies in $\rn$.

 Next we recall some classical notions of convex geometry, for further details see \cite{sch}. Let $L \subset \mathbb{R}^n$ be a convex set,
$p\in\rn\setminus\{0\}$ and $\alpha\in\R$; we set
$$
H_{p,\alpha}=\{x\in\rn\,:\,\langle x,p\rangle=\alpha\}\\
\quad\textrm{and}\quad
H^-_{p,\alpha}=
\{x\in\rn\,:\,\langle x,p\rangle\leq\alpha\}\,.
$$
We say
that $p$ is an {\em exterior normal vector} of $L$ at $x_0$
if $x_0\in L\cap H_{p,\alpha}$
and $L\subseteq H^-_{p,\alpha}$; in such a case, we also say
that the hyperplane
$H_{p,\alpha}$
is a {\em support hyperplane} and that $H^-_{p,\alpha}$ is a {\em supporting
halfspace} (with exterior normal vector $p$) of $L$.\\
The \textit{support function} of $L$ is defined in the following way:
\begin{displaymath}
\textit{h}(L, x)= \sup\{ \langle x,y \rangle: y \in L\}, \quad x \in \mathbb{R}^n.
\end{displaymath}
If $K \in \mathcal{K}_{0}^{n}$, the latter supremum is in fact a maximum and we can write:
\begin{displaymath}
\textit{h}(K, x)=\max\{ \langle x,y \rangle: y \in K\}, \quad x \in \mathbb{R}^n.
\end{displaymath}
For any unit vector $\xi \in S^{n-1}$, $\textit{h}(K, \xi)$ represents the signed distance from the origin of the support plane to $K$ with exterior normal vector $\xi$.
The support function satisfies the following properties:
\begin{enumerate}
\item [(i)] $h(K, \lambda x) = \lambda h(K, x) \, \, \forall \lambda \geq 0$.
\item [(ii)] $h(K, x + y) \leq h(K, x) + h(K, y)$.
\end{enumerate}

In fact, the latter properties characterize support functions in the following sense:
if $f: \mathbb{R}^n \rightarrow \mathbb{R}$ is a function that satisfies $(i)$ and $(ii)$, then there is one (and only one) convex body with support function equal to $f$.\\
Other useful properties of the support function are the following: let $K, K_{1}, K_{2} \in \mathcal{K}_{0}^n$, then
\begin{enumerate}
\item  [(iii)] $h(K + x_{0}, \cdot) = h(K, \cdot) + \langle x_{0}, \cdot \rangle$ \, \, $\forall x_{0} \in \mathbb{R}^n$;
\item [(iv)] $h(\lambda K, \cdot) = \lambda h(K, \cdot)$ \, \, $\forall \lambda \geq 0$;
\item [(v)] $h(K_{1} + K_{2}, \cdot) = h(K_{1}, \cdot) + h(K_{2}, \cdot)$.
\item [(vi)] $h(K_{1}, \cdot) \leq h(K_{2}, \cdot) \, \,  \mbox{if and only if} \, \, K_{1} \subseteq K_{2}.$
\end{enumerate}

If $K \in \mathcal{K}_{0}^n$ the number
$$
w(K, \xi) = h(K, \xi) + h(K, -\xi), \quad \xi\in S^{n-1}
$$
is the \textit{width} of $K$ \textit{in the direction $\xi$}, that is the distance between the two support hyperplanes of $K$ orthogonal to $\xi$. The maximum of the width function
$$
d(K) = \max \{w(K, \xi) | \xi \in S^{n-1}\}
$$
is the {\em diameter} of $K$.\\
The mean width of $K$ is the average of the width of $K$ over all $\xi \in S^{n-1}$, that is
$$
w(K)=\frac{1}{n\omega_{n}} \int_{S^{n-1}} \! w(K,\xi) \, d\xi=\frac{2}{n\omega_n}\int_{S^{n-1}}h(K,\xi)\,d\xi\,.
$$
Urysohn's inequality states
\begin{equation}\label{classicurysohn}
|K|\leq\omega_n\left(\frac{w(K)}{2}\right)^n\,,
\end{equation}
equality holding if and only if $K$ is a ball.

\subsection{The Brunn-Minkowski inequality}\label{sec:brunnmink}

As already mentioned in the introduction, the original form of the Brunn--Minkowski inequality
involves volumes of
convex bodies and states that $V^{1/n}$ is a concave function
with respect to Minkowski addition,
where $\textrm{V}(\cdot)$ denotes the
$n$-dimensional Lebesgue measure and the
Minkowski addition of convex sets is defined as follows:
$$
A + B = \{ x + y \, \,|\, \, x \in A, \, \, y \in B\}
$$
In particular, let  $\lambda\in[0,1]$ and let $\Omega_0$ and $\Omega_1$ be
convex subsets of $\rn$; we define their
{\em Minkowski linear combination} $\Omega_{\lambda}$ as
\begin{equation}\label{bmadd}
\Omega_\lambda=(1-\lambda)\Omega_0+\lambda\Omega_1=
\left\{(1-\lambda)\,x_0+\lambda\,x_1\,:\,x_i\in\Omega_i\,,\,i=0,1
\right\}\,.
\end{equation}

With this notation, the classical Brunn-Minkowski inequality reads
\begin{equation} \label{eqn:bmdisug}
\textrm{V}(K_\lambda)^{\frac{1}{n}}\ge
(1-\lambda)\,\textrm{V}(K_0)^{\frac{1}{n}}
+\lambda\,\textrm{V}(K_1)^{\frac{1}{n}}
\end{equation}
for $K_0, K_1\in\mathcal{K}_0^n$ and $\lambda\in[0,1]$ and
it can be also written in the following equivalent multiplicative form
$$
V(K_\lambda)\geq V(K_0)^{1-\lambda}V(K_1)^\lambda\,.
$$
As it is well known, the Brunn-Minkowski inequality and the PL inequality are equivalent (notice that the way from the latter to the former is almost straightforward by taking $u_0=\chi_{K_0}$, $u_1=\chi_{K_1}$ and $h=\chi_{K_\lambda}$, where $\chi_A$ represents the characteristic function of the set $A$).
\\

Inequality \eqref{eqn:bmdisug} is one of the fundamental results
in the theory of convex bodies and several other important inequalities,
e.g.~the isoperimetric inequality, can be deduced from it.
It can be extended to measurable
sets and it holds also, with the right exponents, for the other
quermassintegrals.
We refer the interested reader
to \cite{sch} and
to the survey paper \cite{gardner} for this topic; see also \cite{K, CK}. It is also interesting to notice that analogues of \eqref{eqn:bmdisug} hold
for many variational functionals, see for instance \cite{BO1, BO2, ChiaraPaolo, BL2, cole, cocusa, cosap, hart, SMA, sala}.
\smallskip

We recall two quantitative versions of \eqref{eqn:bmdisug} which will be used later.
\\
The first proposition is due to Groemer \cite{groemer}. 
\begin{prop} \label{prop:stabilitabm}
Let $K_{0}, K_{1} \in \mathcal{K}_{0}^n$, $n\geq 2$, $\lambda \in (0,1)$ and let 
$$
K_{\lambda} = (1-\lambda)K_{0} + \lambda K_{1}.
$$
Set $\nu_{i}=|K_i|^{\frac{1}{n}}$. Let $\tilde d=\max\{\frac{d(K_0)}{\nu_0};\frac{d(K_1)}{\nu_1}\}$ and $M= \max\{\nu_0, \nu_1\}, m=\min\{\nu_0, \nu_1\}$. Then
$$
|K_{\lambda}| \geq \mathcal{M}_{\frac{1}{n}}(|K_0|, |K_1|,\lambda)\left(1+  \omega H_0(K_0,K_1)^{(n+1)}\right)
$$
where
$$
\omega=\left(\gamma_n\left(\frac{M}{m} \frac{1}{\sqrt{\lambda (1-\lambda)}} + 2 \right)\tilde d\right)^{-(n+1)}\,,
$$
$H_0$ is defined as in \eqref{H0} and 
$$
\gamma_n =(1+\frac{1}{3}2^{-13}) 3^{\frac{n-1}{n}} 2^{\frac{n+2}{n+1}} n < 6.00025 n.
$$
\end{prop}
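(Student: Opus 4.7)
The plan is to reduce the stability estimate to the normalized setting (unit volumes, coincident centroids), then to exhibit inside $K_\lambda$ a concrete geometric excess region that witnesses the non-homothety of $K_0$ and $K_1$, and finally to balance the competing contributions so as to force the exponent $n+1$ on the Hausdorff distance.

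First I would use homogeneity of Lebesgue measure together with translation invariance of Minkowski addition to replace $K_i$ by the homothetic copy $\tau_i K_i$ of unit volume with centroid at the origin. In this normalized setting $|K_0|=|K_1|=1$, so $\mathcal{M}_{1/n}(|K_0|,|K_1|,\lambda)=1$ and $H_0(K_0,K_1)=H(K_0,K_1)=:h$; it therefore suffices to prove the purely dimensional inequality $|K_\lambda|\geq 1+\omega' h^{n+1}$ for some constant $\omega'$ depending only on $n$. The factors $\tilde d$, $M/m$ and $\sqrt{\lambda(1-\lambda)}$ that appear in $\omega$ will then be recovered by tracking the two homotheties $\tau_i$ backward, which rescale the Hausdorff distance by $\max(1/\nu_0,1/\nu_1)$ and introduce a weighted barycentric shift measured by $\lambda(1-\lambda)$.

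For the normalized inequality, since the centroid of a convex body is always an interior point, both $K_0$ and $K_1$ contain a neighborhood of the origin. If $h>0$ then, by the definition of Hausdorff distance, there is without loss of generality $\xi\in S^{n-1}$ with $h(K_0,\xi)-h(K_1,\xi)\gtrsim h$. I would then slice $K_\lambda=(1-\lambda)K_0+\lambda K_1$ by hyperplanes orthogonal to $\xi$ and compare, fiber by fiber, the Brunn-Minkowski lower bound for the sections of $K_0$ and $K_1$ against the actual section of $K_\lambda$; the mismatch of the support functions in the direction $\xi$ then produces, on a slab of width of order $h$, a genuine cap of excess volume. The $(n-1)$-dimensional base of that cap is estimated from below in terms of the normalized diameter $\tilde d$, which explains the appearance of $\tilde d$ in $\omega$.

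The main obstacle is pinning down exactly the exponent $n+1$ rather than the larger exponent that naive cone constructions would yield. This requires an effective use of the one-dimensional stability version of Brunn-Minkowski on each fiber (elementary and sharp with exponent $2$), followed by integration in the direction $\xi^\perp$; the loss of one power comes from the width of the cap and the remaining $h^n$ from its $(n-1)$-dimensional base. Combining this with the equality case of Brunn-Minkowski (homothety) and the explicit control provided by $\tilde d$ and the ratio $M/m$ then yields the stated explicit constant $\omega$, the factor $\gamma_n$ arising from a packing estimate on the unit sphere that bounds the angular loss in the choice of $\xi$.
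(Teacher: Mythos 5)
This proposition is not proved in the paper: it is quoted from Groemer (\cite{groemer}) as a known stability estimate for the Brunn--Minkowski inequality, so there is no in-paper argument to compare yours against. Judged on its own terms, your outline captures a correct intuition (normalize, pick a worst direction for the support-function gap, exhibit excess volume of order $h^{n+1}$), but it has concrete gaps.

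First, the normalization step is not as clean as stated. Writing $A=\tau_0K_0$, $B=\tau_1K_1$ (unit volume, common centroid), you get
$$(1-\lambda)K_0+\lambda K_1=\bigl((1-\lambda)\nu_0+\lambda\nu_1\bigr)\bigl[(1-\mu)A+\mu B\bigr],\qquad \mu=\frac{\lambda\nu_1}{(1-\lambda)\nu_0+\lambda\nu_1}\,,$$
so after normalizing you are \emph{not} proving the inequality for the same coefficient $\lambda$ but for the distorted weight $\mu$, and it is the product $\mu(1-\mu)=\lambda(1-\lambda)\nu_0\nu_1/((1-\lambda)\nu_0+\lambda\nu_1)^2$ that produces both the $\sqrt{\lambda(1-\lambda)}$ and the $M/m$ factors in $\omega$. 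Your phrase about a ``weighted barycentric shift measured by $\lambda(1-\lambda)$'' does not make this explicit and, as written, the $\lambda$-dependence would not be recovered correctly.

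Second, the heart of the argument---``apply the one-dimensional stability of Brunn--Minkowski on each fiber and integrate''---does not go through as described. Slices of a Minkowski sum are not Minkowski sums of slices at fixed heights: $(A+B)\cap\{x\cdot\xi=t\}$ is a \emph{union} over decompositions $t=t_0+t_1$ of $A\cap\{x\cdot\xi=t_0\}+B\cap\{x\cdot\xi=t_1\}$, i.e. a supremal convolution. To extract a lower bound one must invoke Brunn's theorem (concavity of $t\mapsto|K\cap\{x\cdot\xi=t\}|^{1/(n-1)}$) and then track the defect in the $(n-1)$-dimensional Brunn--Minkowski inequality along matched level parameters, a substantially more delicate computation than one-dimensional BM per fiber. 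Your accounting ``one power from the cap width, $h^n$ from its base'' is then an assertion rather than a derivation, and the claimed roles of $\tilde d$ (it sits in the \emph{denominator} of $\omega$, i.e.\ a large normalized diameter \emph{weakens} the estimate, which your phrasing ``estimated from below in terms of $\tilde d$'' inverts) and of $\gamma_n$ (a specific numerical constant $(1+\tfrac{1}{3\cdot2^{13}})3^{(n-1)/n}2^{(n+2)/(n+1)}n$, not a generic sphere-packing loss) are not justified. If you want a self-contained proof, the argument of Groemer's paper is the right model; otherwise the correct move here is simply to cite it.
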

\noindent
The second proposition is due to Figalli, Maggi, Pratelli \cite{FMP, FMP2}.
\begin{prop}\label{fmp}
Let $K_{0}, K_{1} \in \mathcal{K}_{0}^n$,  $\lambda \in (0,1)$ and let 
$$
K_{\lambda} = (1-\lambda)K_{0} + \lambda K_{1}.
$$
Then
$$
|K_{\lambda}| \geq \mathcal{M}_{\frac{1}{n}}(|K_0|,|K_1|,\lambda)\left(1+\frac{nm}{\Lambda M} \left(\frac{A(K_0,K_1)}{\theta_n}\right)^{2}\right),
$$
where $A(K_0,K_1)$ is defined in \eqref{A}, $m$ and $M$ are defined as in the previous theorem, $\Lambda=\max\{\lambda/(1-\lambda),(1-\lambda)/\lambda\}$  and $\theta_n$ is a constant depending on $n$ with polynomial growth. In particular
$$
\theta_n\leq \frac{362 n^7}{(2-2^{\frac{n-1}{n}})^{\frac{3}{2}}}.
$$
\end{prop}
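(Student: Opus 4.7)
The plan is to obtain this quantitative refinement of the Brunn--Minkowski inequality via an optimal mass transportation argument. By the scaling invariance of both sides of the claimed inequality, we may normalize so that $|K_0|=|K_1|=1$ (so $m=M=1$) and translate $K_1$ to the point realizing the infimum in the definition of $A(K_0,K_1)$; the goal then reduces to showing
\[
|K_\lambda|-1 \;\geq\; \frac{n}{\Lambda}\left(\frac{A(K_0,K_1)}{\theta_n}\right)^{2}.
\]

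The first step is to introduce the Brenier map $T=\nabla\varphi : K_0\to K_1$, with $\varphi$ convex, pushing the uniform measure on $K_0$ onto the uniform measure on $K_1$; equal volumes force $\det D^2\varphi = 1$ almost everywhere. The map $S_\lambda(x) = (1-\lambda)x+\lambda T(x)$ is monotone, hence injective, and satisfies $S_\lambda(K_0)\subseteq K_\lambda$, so a change of variables gives
\[
|K_\lambda| \;\geq\; \int_{K_0}\det\!\bigl((1-\lambda)I+\lambda D^2\varphi(x)\bigr)\,dx.
\]
The concavity of $\det^{1/n}$ on symmetric positive matrices recovers the Brunn--Minkowski inequality; a \emph{quantitative} form of that concavity bounds the integrand from below by $1$ plus a quadratic expression in the eigenvalues of $D^2\varphi-I$. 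Integrating yields $|K_\lambda|-1 \geq c(\lambda)\int_{K_0}\|D^2\varphi(x)-I\|^{2}\,dx$. A Poincar\'e-type inequality on the convex body $K_0$, combined with the fact that $T$ transports volumes, then converts this Hessian-deviation bound into a bound of the shape
\[
|K_\lambda|-1 \;\geq\; c'(\lambda,n)\int_{K_0}|T(x)-x|^{2}\,dx \;\geq\; c''(\lambda,n)\,A(K_0,K_1)^{2},
\]
where the last step uses that $|T(x)-x|$ controls pointwise the transportation cost from $K_0$ to $K_1$, which in turn controls the symmetric difference.

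The main obstacle is keeping $\theta_n$ polynomial in $n$ while preserving the \emph{sharp} quadratic exponent on the asymmetry. Naive trace/Poincar\'e inequalities already produce some quantitative bound, but the dimensional constant they deliver degrades too quickly; recovering the stated $\theta_n\lesssim n^{7}$ requires a delicate induction on dimension using hyperplane slicing and the co-area formula, as well as a careful treatment of the AM--GM defect inside $\det((1-\lambda)I+\lambda D^2\varphi)$ when some eigenvalues cluster near $1$, so as not to lose the quadratic behaviour in the equality regime.
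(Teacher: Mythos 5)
You are attempting to reprove an external theorem: the paper itself gives no proof of Proposition \ref{fmp}, which is quoted directly from Figalli--Maggi--Pratelli \cite{FMP, FMP2}. So the comparison here is between your sketch and the FMP argument, not an argument in this paper.

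Your sketch points in the right general direction (the FMP proof is indeed a mass-transportation argument built on the Brenier map), but the two places where you defer the work are exactly where the theorem lives, and as written the route would not deliver the stated constant. First, the ``quantitative concavity'' step: with $\det D^2\varphi=1$, the AM--GM defect of $\det\bigl((1-\lambda)I+\lambda D^2\varphi\bigr)$ is quadratic in the eigenvalue deviations only while the eigenvalues stay in a bounded window; when eigenvalues are very large or very small the defect degenerates to linear behaviour, and you acknowledge this without resolving it. Second, and more seriously, the chain ``Poincar\'e inequality on $K_0$ $\Rightarrow$ control of $\int_{K_0}|T(x)-x|^2\,dx$ $\Rightarrow$ control of $A(K_0,K_1)^2$'' cannot produce a constant depending only on $n,\lambda,m,M$: the Poincar\'e constant of a convex body of volume one depends on its diameter, which is not controlled by the data of Proposition \ref{fmp}, and converting bulk transport cost into symmetric difference costs further geometry-dependent factors (mass in the discrepancy region may travel only a very short distance, e.g.\ for long thin bodies). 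Followed literally, your argument yields a diameter-dependent estimate, i.e.\ a statement of Groemer type as in Proposition \ref{prop:stabilitabm}, not the diameter-free FMP inequality. Figalli--Maggi--Pratelli avoid exactly this trap: instead of a bulk Poincar\'e inequality for the displacement, they run a Gromov-type argument through the divergence theorem on $\partial K_0$, establish a trace-type inequality whose constant is controlled after normalization, and reduce the Brunn--Minkowski stability to their quantitative anisotropic isoperimetric (Wulff) inequality, in which the asymmetry is captured by boundary integrals rather than by $\int|T-x|^2$. Your closing paragraph correctly identifies the difficulty of keeping $\theta_n$ polynomial while preserving the quadratic exponent, but proposes no mechanism for it, so the proof remains genuinely incomplete at that point.
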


We further recall that a very recent stability result for the Brunn-Minkowski inequality by Figalli and Jerison is contained in \cite{FJ} and previous results have been obtained by M. Christ in \cite{Ch1, Ch2}.

\subsection{Power concave functions}

\begin{def}
\label{Definition:1.1}
Let $\Omega$ be a convex set in $\rn$ and $p\in[-\infty,\infty]$.
A nonnegative function $u$ defined in $\Omega$ is said {\em $p$\,-concave} if
$$
u((1-\lambda)x+\lambda y)\geq \mathcal{M}_p(u(x),u(y);\lambda)
$$
for all $x$, $y\in \Omega$ and $\lambda\in(0,1)$. 
In the cases $p=0$ and $p=-\infty$, 
$u$ is also said log-concave and quasi-concave in $\Omega$, respectively. 
\end{def}
In other words, a non-negative function $u$, with convex support $\Omega$, is $p$-concave if:\\
- it is a non-negative constant in $\Omega$, for $p=+\infty$;\\
- $u^p$ is concave in $\Omega$, for $p>0$;\\
- $\log u$ is concave in $\Omega$, for $p=0$;\\
- $u^p$ is convex in $\Omega$, for $p<0$;\\
- it is quasi-concave, i.e. all of its superlevel sets are convex, for $p=-\infty$.\\
Notice that $p=1$ corresponds to usual concavity. Notice also that from \eqref{eqn:media} it follows that 
if $u$ is $p$\,-concave, then $u$ is $q$\,-concave for every $q\le p$
(this in particular means that quasi-concavity is the weakest concavity property one can imagine).

The solutions of elliptic Dirichlet problems in convex domains are often power concave. Two famous results state for instance that the first positive eigenfunction of the Laplace operator in a convex domain is log-concave  \cite{BL1} and that the square root of the solution to the torsion problem in a convex domain is concave \cite{Kawohl, KK2, ML}. For recent results and updated references (in the elliptic and parabolic cases), see for instance \cite{BS, IS}.

The concavity properties of a function $u$ can be expressed in terms of its level sets. Precisely  it is easily seen that a function $u$ is concave if and only if
$$
\{u\geq (1-\lambda)t_0+\lambda t_1\}\supseteq (1-\lambda)\{u\geq t_0\}+\lambda\{u\geq t_1\}
$$
for every $t_0,t_1\in\R$ and every $\lambda\in(0,1)$.

More generally, we have the following characterization of power concave functions, which easily follows from the above property.
\begin{prop}\label{pconclevelset}
A non-negative function $u$  is $p$-concave in a convex domain $\Omega$ for some $p\in[-\infty,+\infty)$ if and only if
$$
\{x\in\Omega\,:\,u(x)\geq \mathcal{M}_p(t_0,t_1,\lambda)\}\supseteq (1-\lambda)\{x\in\Omega\,:\,u(x)\geq t_0\}+\lambda\{x\in\Omega\,:\,u(x)\geq t_1\}
$$
for every $t_0,t_1\geq0$ and every $\lambda\in(0,1)$.
\end{prop}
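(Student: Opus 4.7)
The statement is a pair of implications, and both follow directly from the monotonicity of $\mathcal{M}_p$ in its arguments, together with a careful choice of the parameters $t_0,t_1$. I would split the argument into the two implications, and I do not expect serious obstacles beyond a bit of bookkeeping with the extended real values of $p$.

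For the \emph{forward} implication, assume $u$ is $p$-concave and fix $t_0,t_1\geq 0$ and $\lambda\in(0,1)$. Pick $x_0\in\{u\geq t_0\}$ and $x_1\in\{u\geq t_1\}$. Set $x_\lambda=(1-\lambda)x_0+\lambda x_1$. Since $\Omega$ is convex we have $x_\lambda\in\Omega$, and the defining inequality of $p$-concavity yields
$$
u(x_\lambda)\geq \mathcal{M}_p(u(x_0),u(x_1);\lambda).
$$
Because $\mathcal{M}_p(\cdot,\cdot;\lambda)$ is non-decreasing in each argument (recalled just after \eqref{pmean}), and $u(x_0)\geq t_0$, $u(x_1)\geq t_1$, we get $\mathcal{M}_p(u(x_0),u(x_1);\lambda)\geq \mathcal{M}_p(t_0,t_1;\lambda)$. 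Hence $x_\lambda\in\{u\geq \mathcal{M}_p(t_0,t_1;\lambda)\}$, which is exactly the claimed inclusion. The limit cases $p=0$ and $p=-\infty$ require no separate treatment since they are built into the definition of $\mathcal{M}_p$.

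For the \emph{backward} implication, assume the inclusion of level sets holds for every $t_0,t_1\geq 0$ and every $\lambda\in(0,1)$. Given arbitrary $x_0,x_1\in\Omega$, simply choose $t_0:=u(x_0)$ and $t_1:=u(x_1)$. Then trivially $x_i\in\{u\geq t_i\}$ for $i=0,1$, so by hypothesis
$$
(1-\lambda)x_0+\lambda x_1\in\{x\in\Omega:u(x)\geq \mathcal{M}_p(t_0,t_1;\lambda)\},
$$
which reads
$$
u((1-\lambda)x_0+\lambda x_1)\geq \mathcal{M}_p(u(x_0),u(x_1);\lambda).
$$
This is exactly the $p$-concavity inequality, and since $x_0,x_1,\lambda$ were arbitrary, $u$ is $p$-concave on $\Omega$.

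The only place that might require care is the handling of the boundary convention in \eqref{pmean} when $p\in\mathbb{R}\setminus\{0\}$ and at least one of the values vanishes, but the convention $\mathcal{M}_p(a,b;\lambda)=0$ whenever $ab=0$ makes the trivial inclusion $\{u\geq 0\}=\Omega$ consistent with both directions. The restriction $p<+\infty$ in the statement is the reason we need not worry about the degenerate case $\mathcal{M}_{+\infty}=\max$, for which the backward implication would only recover quasi-concavity rather than constancy on $\Omega$.
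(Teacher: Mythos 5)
Your proof is correct and is the straightforward argument the paper has in mind: the paper merely states that Proposition \ref{pconclevelset} ``easily follows from'' the level-set characterization of ordinary concavity, and your two-way verification (monotonicity of $\mathcal{M}_p$ for the forward inclusion, the choice $t_i=u(x_i)$ for the backward one) is exactly that omitted routine argument. One small inaccuracy in your closing aside: for $p=+\infty$ the backward implication would still recover constancy on an open convex $\Omega$ (take $t_i=u(x_i)$ and use that any $b\in\Omega$ is an interior point of a segment $[a,c]\subset\Omega$ to deduce $u(a)\le u(b)\le u(a)$), not merely quasi-concavity; but this has no bearing on the proposition, which explicitly excludes $p=+\infty$.
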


Let $\mu$ be the distribution function of $u$, i.e.
\begin{equation}\label{mu}
\mu(t)=|\{u\geq t\}|.
\end{equation}
Then, as a direct consequence of the Brunn-Minkowski inequality and Proposition \ref{pconclevelset}, we have the following.
\begin{prop}\label{dist}
If $u$ is  $p$-concave  for some $p\neq 0$, then 
$$\mu(t^{1/p})^{1/n}\,\,\,\text{is concave in }t\,.
$$
If $u$ is log-concave (corresponding to $p=0$), then 
$$\mu(e^t)^{1/n}\,\,\,\text{is concave in }t\,.
$$
\end{prop}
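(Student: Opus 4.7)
The plan is to combine Proposition \ref{pconclevelset} (which translates $p$-concavity of $u$ into a Brunn–Minkowski type inclusion for its superlevel sets) with the classical Brunn–Minkowski inequality \eqref{eqn:bmdisug}, and then perform a straightforward change of variable. Fix $t_0,t_1\geq 0$ and $\lambda\in(0,1)$, and write $E_i=\{u\geq t_i\}$. Since $u$ is $p$-concave (hence quasi-concave) and lies in $L^1(\rn)$ with compact support, each nonempty $E_i$ is a compact convex set, so the Brunn–Minkowski inequality applies to them. Proposition \ref{pconclevelset} gives
$$
\{u\geq \mathcal{M}_p(t_0,t_1,\lambda)\}\supseteq (1-\lambda)E_0+\lambda E_1,
$$
and taking $n$-dimensional volumes together with \eqref{eqn:bmdisug} yields
$$
\mu\bigl(\mathcal{M}_p(t_0,t_1,\lambda)\bigr)^{1/n}\geq (1-\lambda)\mu(t_0)^{1/n}+\lambda\mu(t_1)^{1/n}.
$$
(When one of the $E_i$ is empty the inequality is trivial because $\mu(t_i)=0$ and $\mathcal{M}_p$ is either still controlled or the inequality reduces to monotonicity of $\mu$.)

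Next I specialize the mean. For $p\neq 0$, substitute $t_i=s_i^{1/p}$; then $\mathcal{M}_p(t_0,t_1,\lambda)=((1-\lambda)s_0+\lambda s_1)^{1/p}$, and the displayed inequality rewrites as
$$
\mu\bigl(((1-\lambda)s_0+\lambda s_1)^{1/p}\bigr)^{1/n}\geq (1-\lambda)\mu(s_0^{1/p})^{1/n}+\lambda\mu(s_1^{1/p})^{1/n},
$$
which is the concavity in $s$ of $\mu(s^{1/p})^{1/n}$. For $p=0$ the analogous substitution is $t_i=e^{s_i}$: since $\mathcal{M}_0(e^{s_0},e^{s_1},\lambda)=e^{(1-\lambda)s_0+\lambda s_1}$, the same inequality becomes concavity of $s\mapsto\mu(e^s)^{1/n}$.

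There is essentially no obstacle: the only items to double-check are that the superlevel sets satisfy the hypotheses of Brunn–Minkowski (convex compact, coming for free from $p$-concavity and compact support of $u$), that the sign of $p$ does not spoil the change of variable (for $p<0$ the map $s\mapsto s^{1/p}$ is order-reversing, but the algebraic identity $\mathcal{M}_p(s_0^{1/p},s_1^{1/p},\lambda)=((1-\lambda)s_0+\lambda s_1)^{1/p}$ still holds and the inequality rewrites into plain concavity), and the degenerate cases where $\mu(t_i)=0$, which are trivial.
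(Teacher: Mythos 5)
Your argument is correct and is exactly the route the paper takes: Proposition \ref{pconclevelset} plus the Brunn--Minkowski inequality give $\mu\bigl(\mathcal{M}_p(t_0,t_1,\lambda)\bigr)^{1/n}\geq(1-\lambda)\mu(t_0)^{1/n}+\lambda\mu(t_1)^{1/n}$, and the substitution $t_i=s_i^{1/p}$ (resp.\ $t_i=e^{s_i}$ when $p=0$) turns this into the stated concavity; the paper indeed presents the proposition as a direct consequence of these two ingredients. Your added remarks on the degenerate levels and on the sign of $p$ are sensible and do not change the substance.
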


\subsection{The $(p,\lambda)$-convolution of non-negative functions}\label{sec:convolution}

Let $p\in\R$, $\mu\in(0,1)$, and $u_0,\,u_1$ non-negative functions with compact convex support $\Omega_0$ and $\Omega_1$, as usual in this paper.

The {\em $(p,\lambda)$-convolution}
of $u_0$ and $u_1$ (also called $p$-Minkowski sum, see \cite{K}) is the function 
defined as follows:
\begin{equation}\label{defulambda}
\begin{array}{rl}u_{p,\lambda}(x)=
\sup\big\{&\!\!\!\!\!M_p\big(u_0(x_0),u_1(x_1);\lambda\big)\,:\,\\
&\,\,\,x=(1-\lambda)x_0+\lambda x_1\,,\,x_i\in\overline{\Omega_i},\,i=0,1\big\}.
\end{array}
\end{equation}

The above definition can be extended to the case $p=\pm\infty$, but we do not need here. 
Notice that  \eqref{eqn:media} yields
\begin{equation}\label{upuq}
u_{q,\lambda}\leq u_{p,\lambda}\qquad\text{if }q\leq p\,.
\end{equation}

It is easily seen that the support of $u_{p,\lambda}$ is $\Omega_\lambda=(1-\lambda)\Omega_0+\lambda\Omega_1$, and that the continuity of 
$u_0$ and $u_1$ yields the continuity of $u_{p,\lambda}$, in particular if $u_i\in C(\overline\Omega_i)$ for $i=0,1$, then $u_{p,\lambda}\in C(\overline\Omega_\lambda)$.

Let $p\neq 0$; then, roughly speaking, the graph of $u_{p,\lambda}^p$ is obtained as the Minkowski convex combination (with coefficient $\lambda$) of the hypographs of $u_0^p$ and $u_1^p$; precisely we have
$$
K^{(p)}_{\lambda}=(1-\lambda)K^{(p)}_0+\lambda K^{(p)}_1\,,
$$
where
\begin{equation}\label{defKplambda}
K^{(p)}_{\lambda}=\{(x,t)\in\R^{n+1}\,:\, x\in\Omega_\lambda,\,0\leq t\leq u_{p,\lambda}(x)^p\}\,,
\end{equation}
\begin{equation}\label{defKpi}
K^{(p)}_i=\{(x,t)\in\R^{n+1}\,:\, x\in\Omega_i,\,0\leq t\leq u_i(x)^p\}\,,\quad i=0,1\,.
\end{equation}
In other words, the $(p,\lambda)$-convolution of $u_0$ and $u_1$ corresponds to the $(1/p)$-power of the supremal convolution (with coefficient $\lambda$) of $u_0^p$ and $u_1^p$. When $p=0$, the above geometric considerations continue to hold with logarithm in place of power $p$ and exponential in place of power $1/p$. When $p=1$, $u_{1,\lambda}$ is just the usual supremal convolution of $u_0$ and $u_1$ (see for instance \cite[\S3]{SMA}).
For more details on infimal/supremal convolutions of convex/concave functions, see \cite{rock, st}.\\
From the definition of $u_{p,\lambda}$ and the monotonicity of $p$-means with respect to $p$, we get
\begin{equation}\label{disuguaglianzapenvelope}
u_{p,\lambda}\leq u_{q,\lambda}\quad \mbox{for } -\infty\leq p\leq q\leq+\infty\,.\end{equation}

\section{A proof of Theorem \ref{bbl}}\label{bblstab}
 
Before giving the proof of Theorem \ref{thm:teostab}, we recall here an alternative proof of Theorem \ref{bbl} for power concave functions. The argument will be useful for the proof of Theorem \ref{thm:teostab}.\\
\begin{proof}
First of all, we define $u_{p,\lambda}$ as in \eqref{defulambda} and notice that \eqref{assumptionh} implies 
$$
h\geq u_{p,\lambda} \quad\text{in }\rn\,.
$$
Let 
$$
I_i= \int_{\Omega_i} \! u_i \, dx\quad i=0,1\,,
$$
and 
$$
I_\lambda= \int_{\Omega_\lambda} \! u_{p,\lambda} \, dx\,.
$$
As declared at the beginning, we assume
$$
I_i >0  \, \,i=0,1.
$$
and
$$
L_i=\max_{\Omega_i} u_i<\infty \quad i=0,1.
$$
Notice that the very definition of $u_{p,\lambda}$ yields
\begin{equation}\label{Llambda}
L_\lambda=\max_{\Omega_\lambda} u_{p,\lambda}=\mathcal{M}_p(L_0,L_1,\lambda).
\end{equation}
Let 
$$
\mu_i(s)=|\{u_i \geq s\}| \quad i=0,1\,,\qquad \mu_\lambda(s)=|\{u_{p,\lambda} \geq s\}|\,
$$
(notice that the distribution functions $\mu_0,\,\mu_1$ and $\mu_\lambda$ are continuous thanks to the $p$-concavity of the involved functions).
Then
$$
I_i=\int_0^{L_i} \! \mu_i(s) \, ds \quad i=0,1,\lambda.
$$
The definition of $u_\lambda$ yields 
$$
\{u_{p,\lambda} \geq \mathcal{M}_p(s_0,s_1;\lambda)\}  \supseteq (1-\lambda)\{u_0 \geq s_0\}+ \lambda \{u_1 \geq s_1\}
$$
for $s_0 \in [0,L_0], \,s_1 \in [0,L_1]$.
Then, using the Brunn-Minkowski inequality, we get 
\begin{equation}\label{eqn:star}
\mu_\lambda(\mathcal{M}_p(s_0,s_1;\lambda)) \geq \mathcal{M}_{\frac{1}{n}}(\mu_0(s_0),\mu_1(s_1),\lambda).
\end{equation}
Define the functions $s_i \, : \, [0,1] \rightarrow [0,L_i]$ for $i=0,1$ such that
\begin{equation}\label{defsi}
s_i(t) \,:\,  \frac{1}{I_i} \int_0^{s_i(t)} \! \mu_i(s) \, ds=t \quad \mbox{for} \, \, t \in [0,1].
\end{equation}
Notice that $s_i$ is strictly increasing, then it is differentiable almost everywhere and differentiating \eqref{defsi} we obtain
\begin{equation}\label{eqn:costantet}
\frac{s'_i(t) \mu_i(s_i(t))}{I_i}=1\,\,\,\text{a.e. }t\in[0,1], \quad i=0,1.
\end{equation}
It is also easily seen that $s_i$ is continuous and by \eqref{eqn:costantet} its derivative $s'_i$ coincides almost everywhere with a continuous function in $[0,1)$; hence, as a derivative, in fact it is continuous in the whole $[0,1)$ and finally $s_i\in C^1([0,1))$. Moreover, since $\mu_i$ is decreasing and $s_i$ is increasing, by \eqref{eqn:costantet} we can also see that $s'_i$ is increasing, which yields $s_i$ is convex in $[0,1]$.
\medskip

Now set
$$
s_\lambda(t)=\mathcal{M}_p(s_0(t), s_1(t), \lambda)\quad t \in [0,1]\
$$
and calculate
\begin{equation}\label{sprimolambda}
s'_\lambda(t)=((1-\lambda)s'_0(t) s_0(t)^{p-1} +\lambda s'_1(t) s_1(t)^{p-1}) s_\lambda(t)^{1-p} \quad\text{a.e. }t \in [0,1]\,.
\end{equation}
Notice that the map $s_\lambda:[0,1]\mapsto[0,L_\lambda]$ is strictly increasing, then invertible; let us denote by
$t_\lambda:[0,L_\lambda]\mapsto[0,1]$ its inverse map.

Then 
\begin{eqnarray}\label{eqn:int}
I_\lambda&=&\int_0^{L_\lambda} \! \mu_\lambda(s) \, ds=\int_0^1 \! \mu_\lambda(s_\lambda(t)) s_{\lambda}'(t) \, dt
\\
\nonumber &= &\int_0^1 \mu_\lambda(s_\lambda(t)) \mathcal{M}_1(s'_0(t)s_0(t)^{p-1},s'_1(t) s_1(t)^{p-1})s_\lambda(t)^{1-p} \, dt.
\end{eqnarray}
Thanks to \eqref{eqn:star}, we get
\begin{equation}\label{eqn:min}
\mu_\lambda(s_\lambda(t)) \geq \mathcal{M}_{\frac{1}{n}}(\mu_0(s_0(t)),\mu_1(s_1(t)), \lambda) \quad t\in[0,1]
\end{equation}
and coupling \eqref{eqn:int} and \eqref{eqn:min} we arrive to
\begin{equation}\label{eqn:int2}
I_\lambda \geq \int_0^1 \! \mathcal{M}_{\frac{1}{n}}(\mu_0(s_0(t)),\mu_1(s_1(t)),\lambda)\, \mathcal{M}_1(s'_0(t) s_0(t)^{p-1},s'_1(t)s_1(t)^{p-1},\lambda)\,
s_\lambda(t)^{1-p} \, dt.
\end{equation}
Next we use Lemma \ref{eqn:mean2}  with $p=\frac{1}{n}$ and $q=1$ to obtain
$$
\mathcal{M}_{\frac{1}{n}}(\mu_0(s_0),\mu_1(s_1),\lambda) \mathcal{M}_1(s'_0 s_0^{p-1}, s'_1 s_1^{p-1},\lambda) \geq \mathcal{M}_{\frac{1}{n+1}}(\mu_0(s_0) s_0^{p-1} s'_0, \mu_1(s_1) s_1^{p-1} s'_1,\lambda)
$$
for $s_0 \in [0,L_0], \,s_1 \in [0,L_1]$.
Then \eqref{eqn:int2} yields
\begin{equation}\label{eqn:int3}
I_\lambda \geq \int_0^1 \! \mathcal{M}_{\frac{1}{n+1}}(\mu_0(s_0(t)) s_0(t)^{p-1} s'_0(t), \mu_1(s_1(t)) s_1(t)^{p-1} s'_1(t),\lambda) s_\lambda(t)^{1-p} \, dt.
\end{equation}
Since 
\begin{equation}
s_\lambda^{1-p}=\mathcal{M}_p(s_0,s_1,\lambda)^{1-p}=
\mathcal{M}_{\frac{p}{1-p}}(s_0^{1-p},s_1^{1-p},\lambda),
\end{equation}
using again Lemma \ref{eqn:mean2} with $p=\frac{1}{n+1}$ and $q=\frac{p}{1-p}$ we get
\begin{multline}\label{eqn:mean3}
\mathcal{M}_{\frac{1}{n+1}}(\mu_0(s_0) s_0^{p-1} s'_0, \mu_1(s_1)s_1^{p-1} s'_1, \lambda)\mathcal{M}_{\frac{p}{1-p}}(s_0^{1-p},s_1^{1-p},\lambda) \\\geq \mathcal{M}_{\frac{p}{np+1}} (\mu_0(s_0) s'_0, \mu_1(s_1) s'_1, \lambda).
\end{multline}
Then coupling \eqref{eqn:mean3} with \eqref{eqn:int3} we obtain
$$
I_\lambda \geq \int_0^1 \! \mathcal{M}_{\frac{p}{np+1}} (\mu_0(s_0(t)) s'_0(t), \mu_1(s_1(t)) s'_1(t), \lambda) \, dt\,,
$$
whence, thanks to  \eqref{eqn:costantet}, we finally arrive to 
$$
I_\lambda \geq \int_0^1 \mathcal{M}_{\frac{p}{np+1}} (I_0,I_1,\lambda) \, dt=\mathcal{M}_{\frac{p}{np+1}} (I_0,I_1,\lambda)
$$
This concludes the proof.
\end{proof}

\section{The main result}

Theorem \ref{thm:teostab1} and Theorem \ref{thm:teostab2} essentially stem from the following stability result for the BBL inequality, which we will prove first and can be in fact considered the main result of the paper.
\begin{thm}\label{thm:teostab}
In the same assumptions and notation of Theorem \ref{thm:teostab1} and Theorem \ref{thm:teostab2} (and Remark \ref{oss1}),
 if for some (small enough) $\epsilon >0$ it holds
\begin{equation}\label{quantbbl}
\int_{\Omega_\lambda} \! h(x) \, dx \leq \mathcal{M}_{\frac{p}{np+1}} \left (\int_{\Omega_0} \! u_0(x) \, dx, \int_{\Omega_1} \! u_1(x) \, dx\,;\,\lambda \right) + \epsilon,
\end{equation}
then
\begin{equation}\label{tesi}
|\Omega_\lambda| \leq \mathcal{M}_{\frac{1}{n}}(|\Omega_0|,|\Omega_1|,\lambda)\left[1+\eta\epsilon^{\frac{p}{p+1}}\right]\,.\end{equation}
where 
\begin{equation}\label{eta}
\eta\leq2\left(n+\mathcal{M}_{\frac{p}{np+1}}\big(\int_{\Omega_0} \! u_0(x) \, dx, \int_{\Omega_1} \! u_1(x) \, dx;\lambda\big)^{-1}\right).
\end{equation}
\end{thm}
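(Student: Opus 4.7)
The plan is to revisit the proof of Theorem \ref{bbl} given in \S\ref{bblstab} and track the defect in its crucial Brunn--Minkowski step. As there, we may replace $h$ by the $(p,\lambda)$-convolution $u_{p,\lambda}$, which under \eqref{power00} is itself $p$-concave on $\Omega_\lambda$; in particular, Proposition \ref{dist} applies to its distribution function $\mu_\lambda$. With the notation of \S\ref{bblstab}, let $A(t) := \mathcal{M}_{1/n}(\mu_0(s_0(t)), \mu_1(s_1(t)), \lambda)$; that proof gives the two-step chain
$$I_\lambda = \int_0^1 \mu_\lambda(s_\lambda(t))\, s_\lambda'(t)\, dt \;\geq\; \int_0^1 A(t)\, s_\lambda'(t)\, dt \;\geq\; M,$$
where $M := \mathcal{M}_{p/(np+1)}(I_0, I_1, \lambda)$, the first inequality is the pointwise Brunn--Minkowski estimate \eqref{eqn:star}, and the second follows from applying Lemma \ref{eqn:mean2} twice. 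Hypothesis \eqref{quantbbl} squeezes both inequalities, yielding in particular
$$\int_0^1 \bigl[\mu_\lambda(s_\lambda(t)) - A(t)\bigr]\, s_\lambda'(t)\, dt \;\leq\; \epsilon.$$

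Changing variable $s = s_\lambda(t)$ and writing $\tilde A(s) := A(t_\lambda(s))$ (non-increasing in $s$) turns this into $\int_0^{L_\lambda} [\mu_\lambda(s) - \tilde A(s)]\, ds \leq \epsilon$. Set $M_0 := \mu_\lambda(0) = |\Omega_\lambda|$ and $D_0 := M_0 - \tilde A(0) = |\Omega_\lambda| - \mathcal{M}_{1/n}(|\Omega_0|, |\Omega_1|, \lambda) \geq 0$; this is the quantity we want to control. By continuity and strict monotonicity of $\mu_\lambda$ (a consequence of $p$-concavity), choose $s^* \in (0, L_\lambda)$ with $\mu_\lambda(s^*) = (M_0 + \tilde A(0))/2$. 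For $s \in [0, s^*]$ one has $\mu_\lambda(s) \geq (M_0 + \tilde A(0))/2$ and $\tilde A(s) \leq \tilde A(0)$, whence $\mu_\lambda(s) - \tilde A(s) \geq D_0/2$ and so
$$\epsilon \;\geq\; \int_0^{s^*} [\mu_\lambda(s) - \tilde A(s)]\, ds \;\geq\; \frac{s^* D_0}{2}.$$

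To lower-bound $s^*$, apply Proposition \ref{dist} to $u_{p,\lambda}$: the function $t \mapsto \mu_\lambda(t^{1/p})^{1/n}$ is concave and non-increasing on $[0, L_\lambda^p]$, so the secant joining its endpoints $(0, M_0^{1/n})$ and $(L_\lambda^p, 0)$ lies below its graph, giving $\mu_\lambda(s) \geq M_0(1 - (s/L_\lambda)^p)^n$. Setting $s = s^*$ and using the elementary inequality $(1-u)^{1/n} \leq 1 - u/n$ yields $(s^*/L_\lambda)^p \geq D_0/(2nM_0)$. Combining with the previous display and eliminating $L_\lambda$ via $L_\lambda \geq I_\lambda/M_0 \geq M/M_0$ (from $\mu_\lambda \leq M_0$ together with \eqref{eq0}) gives
$$D_0 \;\leq\; \left(\frac{2}{M}\right)^{\frac{p}{p+1}}(2n)^{\frac{1}{p+1}}\, M_0\, \epsilon^{\frac{p}{p+1}}.$$
Weighted AM--GM applied to $x = 2/M$, $y = 2n$ with exponents $p/(p+1)$ and $1/(p+1)$ bounds the numeric prefactor by $2(n + M^{-1})$. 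Rewriting $M_0 = \tilde A(0)(1 + D_0/\tilde A(0))$ and absorbing the resulting $1 + O(\epsilon^{p/(p+1)})$ factor into the constant under the smallness hypothesis on $\epsilon$ then produces \eqref{tesi} with $\eta$ satisfying \eqref{eta}.

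\textbf{Main obstacle.} The delicate part is converting an integral bound on the Brunn--Minkowski-type defect into a \emph{pointwise} bound at the specific level $s = 0$, which is what records the measure of $\Omega_\lambda$. This is the only place where the $p$-concavity assumption is essential: it supplies the explicit lower envelope for $\mu_\lambda$ that controls the length of the interval on which the integrand remains comparable to $D_0/2$, and it is precisely this interplay that fixes the exponent $p/(p+1)$ appearing in \eqref{tesi}.
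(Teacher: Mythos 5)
Your argument is correct in outline and genuinely different in execution from the paper's. Both proofs begin from the same squeeze
$$\int_0^1 \bigl[\mu_\lambda(s_\lambda(t)) - \mathcal{M}_{1/n}(\mu_0(s_0(t)),\mu_1(s_1(t)),\lambda)\bigr]\, s_\lambda'(t)\, dt \leq \epsilon\,,$$
but they diverge afterwards. The paper runs a Chebyshev-type argument: it introduces the bad set $\Gamma_\delta$ of levels where the Brunn--Minkowski defect exceeds $\delta$, shows $|\Gamma_\delta|\leq\epsilon/\delta$, and then, optimizing $\delta=\epsilon^\alpha/L_\lambda$, locates a \emph{single} level $s_\lambda(\bar t)\leq\epsilon^{1-\alpha}L_\lambda$ at which the defect is $\leq\epsilon^\alpha L_\lambda^{-1}$. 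At that one level it applies the $p$-concavity level-set inclusion \eqref{eqn:concavita} with $\ell_0=0$, $\ell_1=L_\lambda$ and Brunn--Minkowski. Your proof, by contrast, picks the median level $s^*$ and lower-bounds the \emph{integral} of the defect over $[0,s^*]$, then controls $s^*$ by the secant estimate that Proposition \ref{dist} gives for $\mu_\lambda$. Both arguments rely on the same $p$-concavity of $u_{p,\lambda}$ (Proposition \ref{dist} is precisely the level-set inclusion followed by Brunn--Minkowski), and both produce the correct exponent $p/(p+1)$. The interval-integration idea is a perfectly legitimate alternative to the good-level selection and is arguably more transparent.

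There is, however, a genuine gap at the very end concerning the claimed constant \eqref{eta}. Your secant bound for $\mu_\lambda$ is anchored at $\mu_\lambda(0)=M_0=|\Omega_\lambda|$, so the chain of estimates yields
$$D_0 \;\leq\; 2\bigl(n + M^{-1}\bigr)\, M_0\, \epsilon^{p/(p+1)}\,,\qquad M:=\mathcal{M}_{\frac{p}{np+1}}(I_0,I_1;\lambda)\,,$$
and since $M_0 = \mathcal{M}_{1/n}(|\Omega_0|,|\Omega_1|,\lambda) + D_0$ you are left with the implicit inequality $x \leq c(1+x)$ for $x=D_0/\mathcal{M}_{1/n}$ and $c=2(n+M^{-1})\epsilon^{p/(p+1)}$. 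Resolving it gives $x \leq c/(1-c)$, which even under the paper's smallness condition $\epsilon\leq(2n)^{-(p+1)/p}$ only produces $\eta\leq 4(n+M^{-1})$ (or worse), not the stated $\eta\leq 2(n+M^{-1})$; absorbing the factor $1+D_0/\mathcal{M}_{1/n}$ "into the constant" strictly enlarges it beyond what \eqref{eta} asserts. The paper avoids this recursion by applying Brunn--Minkowski at the selected level and then bounding $\mu_i(s_i(\bar t))\leq|\Omega_i|$, so that $\mathcal{M}_{1/n}(|\Omega_0|,|\Omega_1|,\lambda)$ (and not $|\Omega_\lambda|$) appears on the right-hand side from the start, and the only multiplicative loss is the factor $(1-\xi)^{-n}\leq 2$, which is exactly the $2$ in \eqref{eta}. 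So your proof establishes the inequality \eqref{tesi} with the right power of $\epsilon$ but with a constant a bounded factor larger than claimed; to match \eqref{eta} you would need to replace the secant estimate anchored at $M_0$ by a comparison that brings in $|\Omega_0|$, $|\Omega_1|$ directly, as the paper does.
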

\begin{oss}{\rm
"Small enough" (referred to $\epsilon$ in the statement of Theorem \ref{thm:teostab})
precisely means
$$
\epsilon \leq \left(\frac{1}{2n}\right)^{\frac{p+1}{p}}
$$
and we could make similar comments as in Remark \ref{oss1} and Remark \ref{big2}.
This number depends on $n$ (and tends to $0$ as $n\to\infty$), then the result of Theorem \ref{thm:teostab} is dimension sensitive (see Remark \ref{dimsensitive}).
}
\end{oss}
\begin{proof}
First of all notice that Brunn-Minkowsi inequality states
$$
|\Omega_\lambda| \geq \mathcal{M}_{\frac{1}{n}}(|\Omega_0|, |\Omega_1|, \lambda)\,,
$$
and if equality holds, there is nothing to prove. Then let us assume 
\begin{equation}\label{eqn:ipomega}
|\Omega_\lambda| = \mathcal{M}_{\frac{1}{n}}(|\Omega_0|,|\Omega_1|,\lambda) + \tau,
\end{equation}
for some $\tau>0$. Our aim is to find and estimate on $\tau$ depending on $\epsilon$, that is $\tau < f(\epsilon)$ (with $\lim_{\epsilon \rightarrow 0}f(\epsilon)=0$).

We use the same notation as in the proof of Theorem \ref{bbl} given in the previous section and following the same argument we arrive again to 
\eqref{eqn:star} and then \eqref{eqn:min}.

Now, given any $\delta>0$, set
\begin{equation}\label{eqn:fd}
F_{\delta}=\{t  \in [0,1] \, : \, \mu_\lambda(s_\lambda(t))  > \mathcal{M}_{\frac{1}{n}}(\mu_0(s_0(t)), \mu_1(s_1(t)), \lambda) + \delta \}
\end{equation}
and
\begin{equation}\label{gammadelta}
\Gamma_\delta=\{s_\lambda(t)\,:\,t\in F_\delta\}\,.
\end{equation}
Notice that $F_{\delta}$ and $\Gamma_\delta$ are measurable sets, thanks to Proposition \ref{dist} and to the monotonicity and regularity of the $s_i$'s. \\
Then we have
\begin{eqnarray}
I_\lambda \nonumber & = & \int_0^{L_\lambda}\mu_\lambda(s)\,ds=\int_{0}^{1} \!\mu_\lambda(s_\lambda(t)) s'_\lambda(t) \, dt \nonumber \\ 
&=&  \int_{F_{\delta}} \! \mu_\lambda(s_\lambda(t)) s'_\lambda(t) \, dt + \int_{[0,1] \setminus F_{\delta}} \! \mu_\lambda(s_\lambda(t)) s'_\lambda(t) \, dt \nonumber \\ 
&\geq& \int_{F_{\delta}} \! \big[\mathcal{M}_{\frac{1}{n}}(\mu_0(s_0(t)), \mu_1(s_1(t)), \lambda)+\delta\big]\,s'_\lambda(t) \, dt + \int_{[0,1] \setminus F_{\delta}} \! \mu_\lambda(s_\lambda(t)) s'_\lambda(t) \, dt \nonumber \\ 
&\geq& \int_0^1\! \mathcal{M}_{\frac{1}{n}}(\mu_0(s_0(t)), \mu_1(s_1(t)), \lambda)s'_\lambda(t) \, dt +\delta\int_{F_{\delta}} \! s'_\lambda(t) \, dt \nonumber \\ 
& =& \int_{0}^1 \mathcal{M}_{\frac{1}{n}}(\mu_0(s_0(t)), \mu_1(s_1(t)), \lambda)s'_\lambda(t) \, dt +  \delta\,|\Gamma_{\delta}|  \nonumber
\end{eqnarray}
where in the first inequality we have used the definition of $F_{\delta}$, in the second we have used \eqref{eqn:min} and in the last equality we have used the definition of $\Gamma_\delta$ (and the change of variable $s=s_\lambda(t)$).

Continuing to argue as in the proof of Theorem \ref{bbl} given in the previous section, we find
$$
\int_{0}^1 \mathcal{M}_{\frac{1}{n}}(\mu_0(s_0(t)), \mu_1(s_1(t)), \lambda)s'_\lambda(t) \, dt \geq \mathcal{M}_{\frac{p}{np+1}} (I_0,I_1,\lambda).
$$
Moreover from \eqref{quantbbl} we know that
$$
\mathcal{M}_{\frac{p}{np+1}} \left (I_0, I_1, \lambda \right) + \epsilon\geq I_\lambda
$$
and so we can conclude
$$
 \mathcal{M}_{\frac{p}{np+1}} \left (I_0, I_1, \lambda \right) + \epsilon \geq I_\lambda\geq \mathcal{M}_{\frac{p}{np+1}} \left (I_0, I_1, \lambda \right) +  \delta\, 
 |\Gamma_{\delta}|
$$
which implies that
\begin{equation}\label{eqn:fdelta}
|\Gamma_{\delta}| \leq \epsilon/\delta.
\end{equation}
Take now
$$
\delta=\epsilon^{\alpha}/L_\lambda
$$
for some $0<\alpha <1$.
Then \eqref{eqn:fdelta} reads
\begin{equation}\label{eqn:Gammadelta}
|\Gamma_{\epsilon^\alpha/L_\lambda}| < \epsilon^{1-\alpha}L_\lambda\,.
\end{equation}

Let
$u_\lambda$ be defined in \eqref{defulambda}.
Then, thanks to assumption \eqref{power00}, $u_\lambda$ is $p$-concave, that is the following inclusion holds
\begin{equation}\label{eqn:concavita}
\{z\,:\,u_\lambda(z)\geq \mathcal{M}_p(\ell_{0},\ell_{1},\xi) \} \supseteq (1-\xi) \,\{x\,:\,u_\lambda(x)\geq \ell_{0}\} + \xi\, \{y\,:\,u_\lambda(y) \geq \ell_{1}\}.
\end{equation}
for $\xi \in[0,1]$, $\ell_0\in[0,L_0]$ and $\ell_1\in[0,L_1]$.\\
Let us choose 
\begin{equation}\label{eqn:t1}
\ell_{0}=0, \quad \ell_{1}= L_\lambda.
\end{equation}

By \eqref{eqn:Gammadelta}, we can find $\bar{t}>0$ such that
\begin{equation}\label{eqn:bart1}
s_\lambda(\bar{t}) \leq \epsilon^{1-\alpha}L_\lambda\,, 
\end{equation}
and
\begin{equation}\label{eqn:bart}
\mu_\lambda(s_\lambda(\bar{t}))  \leq \mathcal{M}_{\frac{1}{n}}(\mu_0(s_0(\bar{t})), \mu_1(s_1(\bar{t})), \lambda) + \epsilon^{\alpha}L_\lambda^{-1}.
\end{equation}



Let
\begin{equation}\label{xi}
\xi=\left(\frac{s_\lambda(\bar{t})}{L_\lambda}\right)^p\,.
\end{equation}
From \eqref{eqn:bart1} we have
\begin{equation}\label{xi1}
\xi\leq \epsilon^{(1-\alpha)p}\,.
\end{equation}
With these choices of $\ell_{0}$, $\ell_{1}$ and $\xi$, we have $s_\lambda(\bar{t})= \mathcal{M}_p(\ell_{0},\ell_{1},\xi)$ and \eqref{eqn:concavita} reads
$$
\{u_{\lambda} \geq s_\lambda(\bar{t})\} \supseteq(1- \xi) \Omega_\lambda+ \xi\, \{u_{\lambda} \geq L_\lambda\}
$$
From the Brunn-Minkowski inequality we get
$$
|\{u_{\lambda} \geq s_\lambda(\bar{t})\}| \geq \left((1-\xi) |\Omega_\lambda|^{\frac{1}{n}} + \xi\, |\{u_{\lambda} \geq L_\lambda\}|^{\frac{1}{n}}\right)^n\,.
$$
Using \eqref{eqn:ipomega} and neglecting $|\{u_{\lambda} \geq L_\lambda\}|$ (notice that $\{u_\lambda\geq L_\lambda\}=(1-\lambda)\{u_0\geq L_0\}+\lambda\{u_1\geq L_1\}$ and, if the involved functions are strictly $p$-concave, as we can assume without loss of generality, these three sets reduce to a single point, then they all have zero measure), we get
$$
|\{u_{\lambda} \geq s_\lambda(\bar{t})\}| \geq (1-\xi)^n \mathcal{M}_{\frac{1}{n}}(|\Omega_0|,|\Omega_1|,\lambda) + (1-\xi)^n\tau.
$$
Then, by \eqref{eqn:bart} we have
$$
\epsilon^{\alpha}L_\lambda^{-1} + \mathcal{M}_{\frac{1}{n}}(\mu_0(s_0(\bar{t})), \mu_1(s_1(\bar{t})), \lambda) \geq(1- \xi)^n \mathcal{M}_{\frac{1}{n}}(|\Omega_0|,|\Omega_1|,\lambda) + (1-\xi)^n \tau\,.
$$
Since $\mu_0(s_0(\bar{t}))\leq|\Omega_0|$ and  $\mu_1(s_1(\bar{t}))\leq|\Omega_1|$ and thanks to the monotonicity of the mean $\mathcal{M}_{\frac{1}{n}}$,
the previous formula implies
$$ 
\epsilon^{\alpha}L_\lambda^{-1} + \mathcal{M}_{\frac{1}{n}}(|\Omega_0|,|\Omega_1|,\lambda) \geq (1-\xi)^n \mathcal{M}_{\frac{1}{n}}(|\Omega_0|,|\Omega_1|,\lambda) + (1-\xi)^n \tau\,,
$$
whence
$$
\tau \leq \left(\epsilon^\alpha L_\lambda^{-1} + \mathcal{M}_{\frac{1}{n}}(|\Omega_0|,|\Omega_1|,\lambda)[1-(1-\xi)^n] \right)(1-\xi)^{-n}.
$$
Since $(1-\xi)^n\geq 1-n\xi\geq1/2$ for $0\leq\xi\leq\frac{1}{2n}$, we get
\begin{equation}\label{eqn:last1}
\tau \leq 2\left(\epsilon^\alpha L_\lambda^{-1} + n\,\mathcal{M}_{\frac{1}{n}}(|\Omega_0|,|\Omega_1|,\lambda)\,\xi\right).
\end{equation}

Take $\alpha=\frac{p}{p+1}$ and $\epsilon$ small enough (precisely $\epsilon \leq (\frac{1}{2n})^{\frac{(p+1)}{p}}$) and recall \eqref{xi1}, then \eqref{eqn:last1} reads 
\begin{equation}\label{pp}
|\Omega_\lambda| \leq \mathcal{M}_{\frac{1}{n}}(|\Omega_0|,|\Omega_1|,\lambda)+2\left(L_\lambda^{-1}+n \mathcal{M}_{\frac{1}{n}}(|\Omega_0|,|\Omega_1|,\lambda)\right)\epsilon^{\frac{p}{p+1}}\,.
\end{equation}
Since clearly $I_i\leq L_i|\Omega_i|$, for $i=0,1,\lambda$, we get
$$
L_\lambda\geq\mathcal{M}_p(I_0/|\Omega_0|, I_1/|\Omega_1|;\lambda)
$$
and Lemma \ref{eqn:mean2} implies
$$
L_\lambda\geq \frac{\mathcal{M}_{\frac{p}{np+1}}(I_0, I_1;\lambda)}{\mathcal{M}_{\frac1n}(|\Omega_0|, |\Omega_1|;\lambda)}\,.
$$
Combining the latter with \eqref{pp} we obtain
$$
|\Omega_\lambda| \leq \mathcal{M}_{\frac{1}{n}}(|\Omega_0|,|\Omega_1|,\lambda)\left[1+2\left(n+\mathcal{M}_{\frac{p}{np+1}}(I_0, I_1;\lambda)^{-1}\right)\epsilon^{\frac{p}{p+1}}\right]\,
$$
and the proof is concluded.
\end{proof}

\section{Proofs of Theorem \ref{thm:teostab1} and Theorem \ref{thm:teostab2}}

Now we prove Theorem \ref{thm:teostab1}. 
\begin{proof}[Proof of Theorem \ref{thm:teostab1}.] 
We argue by contradiction. Suppose that
$$
\int_{\Omega_\lambda} \! h(x) \, dx <\mathcal{M}_{\frac{p}{np+1}} \left (\int_{\Omega_0} \! u_0(x) \, dx, \int_{\Omega_1} \! u_1(x) \, dx, \lambda \right) + \beta H_0(\Omega_0,\Omega_1)^{\frac{(n+1)(p+1)}{p}}
$$
where $\beta$ is defined in Remark \ref{oss1}. Then we apply Theorem \ref{thm:teostab} and we get
$$
|\Omega_\lambda|<\mathcal{M}_{\frac{1}{n}}(|\Omega_0|, |\Omega_1|, \lambda)\left(1+  \eta \beta^{\frac{p}{p+1}}H_0(\Omega_0,\Omega_1)^{n+1}\right),
$$
where $\eta$ is like in \eqref{eta}.
Then we use Proposition \ref{prop:stabilitabm} and, thanks to the definition of the constant $\beta$, we easily get a contradiction.
\end{proof}

Regarding Theorem \ref{thm:teostab2}, we notice that it can be proved precisely in the same way, using  the quantitative version of the Brunn-Minkowski inequality proved by Figalli, Maggi and Pratelli, that is Proposition \ref{fmp}, in place of Proposition \ref{prop:stabilitabm}.

\section{Some applications}\label{sec:rigidita}


In the following section we apply Theorem \ref{thm:teostab2} and Theorem \ref{thm:teostab1} to derive quantitative versions
of some Urysohn inequalities for functionals that can be written in terms of the
solution of a suitable elliptic boundary value problem.
As a toy model, we take the torsional rigidity for which we carry out all the computations. However, as   exploited with details in Section \ref{furthappl}, the same kind of quantitative results can be proved for a wide class of elliptic operators. \\
Let us recall the definition of the torsional rigidity $\tau(K)$ of a convex body $K$ given in \eqref{eqn:tau}:
$$
\frac{1}{\tau(K)}= \inf\big\{\frac{\int_{K} \! |Du|^2 \, dx}{(\int_{K} \! |u| \, dx)^2} \, \, : \, \, u \in W_{0}^{1,2}(\text{int}(K)), \, \, \int_{K} \! |u| \, dx<0 \big\}.
$$
Take $u$ the unique solution of
\begin{equation}\label{eqn:problem}
\left\{
 \begin{array}{ll}
 \Delta u = -2 \, \, & \mbox{in} \, \, \mbox{int(K)} \\
                 u= 0 \, \, & \mbox{on} \, \, \partial K.
                 \end{array}
\right.\,
\end{equation}
Then we have
\begin{equation}\label{tauintu}
\tau(K)=\int_{K} \! u \, dx.
\end{equation}
We recall an useful geometric property satisfied by the solutions of  problem \eqref{eqn:problem}  (see \cite{ML} and \cite{KK2} for details):
\begin{prop}\label{prop:concavita}
If $u$ is the solution to problem \eqref{eqn:problem} then $u$ is $\frac{1}{2}$-concave, i.e. the function 
$$
v(x)=\sqrt{u(x)}$$
is concave in $K$.
\end{prop}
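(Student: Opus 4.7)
The statement is a classical result, proved by Makar-Limanov \cite{ML} and Korevaar--Kennington \cite{KK2}, so my plan follows the concavity maximum principle. First, set $v := \sqrt{u}$. Since $u = v^2$, the identity $\Delta u = 2 v \Delta v + 2|\nabla v|^2 = -2$ gives
\begin{equation*}
v \Delta v + |\nabla v|^2 + 1 = 0 \quad \text{in int}(K), \qquad v = 0 \text{ on } \partial K,
\end{equation*}
and we wish to show $v$ is concave on $K$.

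The first step is to reduce to a smooth strictly convex $K$ by approximating with $C^\infty$ strictly convex bodies $K_j \to K$ in Hausdorff distance. Schauder theory yields $u_j \in C^\infty(\overline{K_j})$ with $u_j > 0$ in the interior, so $v_j := \sqrt{u_j}$ is smooth in $\text{int}(K_j)$; concavity of $v_j$ then passes to the limit. Next, introduce Kennington's concavity function
\begin{equation*}
C(x, y, \mu) := \mu v(x) + (1-\mu) v(y) - v(\mu x + (1-\mu) y)
\end{equation*}
on $\overline{K} \times \overline{K} \times [0,1]$, and argue by contradiction that $C$ attains a strictly positive maximum at some $(x_0, y_0, \mu_0)$. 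The degenerate cases $\mu_0 \in \{0,1\}$ and $x_0 \text{ or } y_0 \in \partial K$ are excluded via the Hopf boundary-point lemma applied to $u$, which forces $|\nabla u| > 0$ on $\partial K$ and thereby controls the vanishing of $v$ near the boundary.

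At an interior maximum, set $z_0 := \mu_0 x_0 + (1-\mu_0) y_0$. First-order optimality gives $\nabla v(x_0) = \nabla v(y_0) = \nabla v(z_0) =: g$, and the second-order conditions (testing the Hessian of $C$ with $(w,w)$ for arbitrary $w$) yield the matrix inequality $\mu_0 D^2 v(x_0) + (1-\mu_0) D^2 v(y_0) \leq D^2 v(z_0)$. Inserting the PDE at $x_0, y_0, z_0$ (all with the common gradient $g$) and combining this full matrix inequality with the strict bound $v(z_0) < \mu_0 v(x_0) + (1-\mu_0) v(y_0)$ coming from $C(x_0, y_0, \mu_0) > 0$ produces the desired contradiction. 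The algebraic heart of this step is Kennington's criterion: the nonlinearity $F(v, p, S) := v\, \tr(S) + |p|^2 + 1$ satisfies the structural condition that forces $v^{1/2}$-concavity, essentially because $1/v$ is convex and this convexity couples correctly with the Hessian inequality (the trace alone is not sufficient).

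The main obstacle is precisely the boundary behavior of $v$: since $u$ vanishes linearly along $\partial K$, the gradient of $v = \sqrt{u}$ blows up there, and the maximum principle cannot be applied naively on $\overline{K}$. The standard remedy is to work on the superlevel sets $\{u \geq \delta\}$ for $\delta > 0$, where $v$ is smooth with bounded gradient, prove concavity of $v$ on each such set by the argument above, and then pass to the limit $\delta \to 0^+$ using the density of $\bigcup_{\delta > 0} \{u \geq \delta\}$ in $K$.
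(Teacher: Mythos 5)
The paper does not prove this proposition: it states it as a recall and explicitly refers the reader to \cite{ML} and \cite{KK2} for the argument. So there is no ``paper proof'' to match against; what you have written is a sketch of Kennington's concavity maximum principle argument, which is indeed the route taken in \cite{KK2} (Makar-Limanov's original two-dimensional proof in \cite{ML} is quite different, based on an explicit auxiliary function involving second derivatives of $u$, not on the concavity function $C$).

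As a sketch of \cite{KK2}, the scaffolding you describe (approximation by smooth strictly convex domains, contradiction on a positive interior maximum of $C$, first-order conditions forcing equal gradients, Hopf's lemma to exclude boundary maxima, passage to the limit through superlevel sets) is all correct. The genuine gap is at what you call the ``algebraic heart.'' You acknowledge ``the trace alone is not sufficient,'' but the sketch then settles for a slogan about $1/v$ being convex. In fact, taking the trace of your second-order inequality $\mu_0 D^2v(x_0)+(1-\mu_0)D^2v(y_0)\leq D^2v(z_0)$ and substituting $\Delta v=-(1+|g|^2)/v$ (with all three gradients equal to $g$) gives only
$$
\frac{1}{v(z_0)} \;\leq\; \frac{\mu_0}{v(x_0)}+\frac{1-\mu_0}{v(y_0)}\,,
$$
and the convexity of $t\mapsto 1/t$ pushes the right-hand side \emph{above} $1/\bigl(\mu_0 v(x_0)+(1-\mu_0)v(y_0)\bigr)$, so this inequality is entirely consistent with $v(z_0)<\mu_0 v(x_0)+(1-\mu_0)v(y_0)$ --- no contradiction. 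Kennington's actual proof closes the argument with a differently structured concavity function (or, equivalently, a sharper structural hypothesis on the nonlinearity that goes by the name of harmonic concavity) so that the full Hessian inequality, not just its trace, produces the contradiction. Your sketch names this ingredient but does not supply it, and without it the argument does not go through. In short: correct template, matching the cited reference, but the decisive computation is missing.
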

Finally we recall a  comparison result for solutions of problem \eqref{eqn:problem} in different domains (see \cite{cole} and \cite{SMD} for details):
\begin{prop}\label{prop:comparison}
Let $K_0, K_1$ be convex bodies, $\lambda \in [0,1]$ and $K_{\lambda}=(1-\lambda)K_0 + \lambda K_1$, Let $u_i$ be the solution of problem \eqref{eqn:problem} in $K_i, \, i=0,1,\lambda$. Then
$$
u_{\lambda} ((1-\lambda)x + \lambda y)^{\frac{1}{2}} \geq (1-\lambda)u_0(x)^{\frac{1}{2}} + \lambda u_1(y)^{\frac{1}{2}} \quad \forall x \in K_0, \, y \in K_1.
$$
\end{prop}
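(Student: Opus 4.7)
The plan is to reduce the pointwise bound to a viscosity comparison for $-\Delta = 2$, applied to the squared supremal Minkowski convolution of $\sqrt{u_0}$ and $\sqrt{u_1}$. Let $v_i := \sqrt{u_i}$ on $K_i$, which by Proposition \ref{prop:concavita} are concave and vanish on $\partial K_i$, and define on $K_\lambda$
$$W(z) := \sup\bigl\{(1-\lambda)v_0(x) + \lambda v_1(y) \,:\, x \in K_0,\,y \in K_1,\,(1-\lambda)x+\lambda y = z\bigr\}.$$
The statement of the proposition is equivalent to $W(z)^2 \leq u_\lambda(z)$ for all $z \in K_\lambda$. The supremal convolution $W$ is concave and continuous on $K_\lambda$; on $\partial K_\lambda$ any decomposition $z = (1-\lambda)x+\lambda y$ with $x\in K_0$, $y\in K_1$ forces $x \in \partial K_0$ and $y \in \partial K_1$ (a standard feature of Minkowski sums of convex bodies), so $W$ vanishes on $\partial K_\lambda$ and $W^2$ agrees with $u_\lambda$ on the boundary.

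The core step is to show that $W^2$ is a viscosity subsolution of $-\Delta U = 2$ on $\mathrm{int}\,K_\lambda$. Pick an interior $z_0$ with $W(z_0) > 0$ (the case $W(z_0)=0$ being immediate) and a $C^2$ test function $\phi$ touching $W^2$ from above at $z_0$. Continuity and concavity provide optimizers $x_0, y_0$ with $W(z_0) = (1-\lambda)v_0(x_0) + \lambda v_1(y_0)$, and first-order optimality yields $p := \nabla v_0(x_0) = \nabla v_1(y_0)$. After an approximation that makes $v_i$ strictly concave and $C^2$ near the optimizer (so $M_i := -D^2 v_i > 0$), the standard Legendre-duality formula for supremal convolutions gives $D^2 W(z_0) = -N^{-1}$, where $N := (1-\lambda)M_0^{-1} + \lambda M_1^{-1}$. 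The touching condition $D^2\phi(z_0) \geq D^2(W^2)(z_0)$ then implies
$$\Delta\phi(z_0) \geq -2\,W(z_0)\,\mathrm{tr}(N^{-1}) + 2|p|^2.$$
The PDE $\Delta u_i = -2$ translates via $u_i = v_i^2$ to the trace identity $r_i\,\mathrm{tr}(M_i) = 1 + |p|^2$ at the optimizer, where $r_i := v_i$ there. Setting $R := (1-\lambda)r_0 + \lambda r_1 = W(z_0)$, $s := (1-\lambda)r_0/R$, and $\tilde M_i := r_i M_i$, one has $N = R\bigl[s\tilde M_0^{-1} + (1-s)\tilde M_1^{-1}\bigr]$ with $\mathrm{tr}(\tilde M_i) = 1+|p|^2$, and operator convexity of matrix inversion (matrix AM-HM) gives
$$R\,\mathrm{tr}(N^{-1}) = \mathrm{tr}\Bigl(\bigl[s\tilde M_0^{-1} + (1-s)\tilde M_1^{-1}\bigr]^{-1}\Bigr) \leq s\,\mathrm{tr}(\tilde M_0) + (1-s)\,\mathrm{tr}(\tilde M_1) = 1 + |p|^2.$$
Substituting into the displayed inequality yields $\Delta\phi(z_0) \geq -2$, the desired subsolution property.

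Since $u_\lambda$ is a classical solution of $-\Delta u_\lambda = 2$ with $u_\lambda = 0 = W^2$ on $\partial K_\lambda$, the standard comparison principle between a viscosity subsolution and a classical solution of the Poisson equation forces $W^2 \leq u_\lambda$ on $K_\lambda$, which yields the proposition upon taking square roots. The main technical obstacle is the regularization step: in general the $v_i$ need only be concave (not strictly so, and not $C^2$ everywhere) at the candidate optimizers; one handles this by approximating $K_0, K_1$ with smooth uniformly convex bodies, on which $\sqrt{u_i}$ is strongly concave by refinements of the Makar-Limanov estimate, and then passing to the limit using continuity of the torsion function under Hausdorff convergence of convex bodies. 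The conceptual crux, however, is the clean matrix AM-HM step, whose applicability crucially depends on the rescaling by $R$ that makes the convex combination weights $s$ and $1-s$ sum to one.
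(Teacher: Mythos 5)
Your proof is correct in substance, but note that the paper itself does not prove this proposition: it recalls it from \cite{cole} and \cite{SMD}, and the mechanism behind it is exactly the framework reproduced in Section 7 (Proposition \ref{mdprop} and Corollary \ref{prevcor} with $F(x,t,\xi,A)=\mathrm{tr}(A)+2$ and $p=1/2$): the $(1/2,\lambda)$-Minkowski convolution of $u_0,u_1$ --- which is precisely your $W^2$ --- is a viscosity subsolution of the torsion problem in $K_\lambda$, and comparison concludes. So your overall route coincides with that of the paper's source; what differs is how the subsolution property is verified. You compute $D^2W$ via Legendre duality (harmonic combination of the Hessians) and close with the rescaled matrix AM--HM inequality, and this algebra is exactly right: the normalization by $R$, the identity $r_i\,\mathrm{tr}(M_i)=1+|p|^2$ coming from $\Delta u_i=-2$, and the fact that your estimate is attained with equality show it is the sharp computation. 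The price is that you need $W$ twice differentiable at the touching point, hence $C^2$ regularity and strict concavity of $\sqrt{u_i}$ at the optimizers, which you only sketch: in dimension $n\ge 3$ this is not a ``refinement of Makar-Limanov'' (a two-dimensional result) but rather the constant-rank theorem of Korevaar--Lewis on smooth strictly convex domains, and what you actually need is only $D^2\sqrt{u_i}\prec 0$ near the interior optimizers, not strong concavity of the approximants up to the boundary; by contrast \cite{SMD} works with viscosity test functions alone under the boundary-gradient condition \eqref{liminf}, which (via Hopf's lemma, available on your smooth approximating bodies) is also what justifies your unstated claim that the optimizers $x_0,y_0$ are interior --- without that, $\nabla v_0(x_0)=\nabla v_1(y_0)$ is meaningless. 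If you want to keep your computation but drop the regularization and the duality formula entirely, test from below with the affine competitors $x(z)=x_0+\frac{r_0}{R}(z-z_0)$, $y(z)=y_0+\frac{r_1}{R}(z-z_0)$, which respect the constraint and use only the interior smoothness of the classical solutions $u_0,u_1$; the resulting requirement $W(z_0)\,\mathrm{tr}[(1-\lambda)\sigma^2M_0+\lambda\tau^2M_1]\le 1+|p|^2$ holds with equality for $\sigma=r_0/R$, $\tau=r_1/R$, and is the scalar shadow of your matrix AM--HM step; the conclusion by the (now classical) maximum principle is then immediate.
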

The Brunn-Minkowski inequality for $\tau$, that is \eqref{BM4tau}, essentially stems from \eqref{tauintu} and the above propositions.
Also, it can be viewed as a straightforward application of Corollary \ref{mdtau} when $f$ is constant.\\
Taking into account also 
Theorem \ref{thm:teostab1} and Theorem \ref{thm:teostab2}, we get Theorem \ref{thm:BM4tau}.

\begin{proof}[Proof of Theorem \ref{thm:BM4tau}.]
Thanks to Proposition \ref{prop:concavita} and Proposition \ref{prop:comparison}, it is possible to apply Theorem \ref{thm:teostab1} and Theorem \ref{thm:teostab2} with $p=1/2$ and $h=u_\lambda$. Then it is easily seen that \eqref{quantbbl1} and \eqref{quantbbl2} precisely reads as 
\eqref{BM4tau1} and \eqref{BM4tau2} respectively, thanks to  \eqref{tauintu}.   
\end{proof}


\subsection{An Urysohn inequality for torsional rigidity}
As recalled in the Introduction, the following proposition can be retrieved from more general results in \cite{BFL, SMD}  and it was already sketched in \cite{ChiaraPaolo}. For a better understanding of our results, we give here an explicit proof.
\begin{prop}
Let $\Omega$ be an open bounded convex set in $\mathbb{R}^n$ and let $\Omega^{\sharp}$ be the ball with the same mean-width of $\Omega$. Then it holds
\begin{equation}\label{eqn:urytau}
\tau(\Omega) \leq \tau(\Omega^{\sharp})
\end{equation}
and equality holds if and only if $\Omega=\Omega^{\sharp}$.
\end{prop}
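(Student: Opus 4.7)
The plan is to derive \eqref{eqn:urytau} from the Brunn--Minkowski inequality \eqref{BM4tau} for $\tau$ via a \emph{Minkowski rotation symmetrization} of $\Omega$. Two elementary facts make this strategy work: by property (v) of the support function, the mean width is \emph{linear} along Minkowski convex combinations, that is $w((1-\lambda)K_0+\lambda K_1)=(1-\lambda)w(K_0)+\lambda w(K_1)$; and both $w$ and $\tau$ are invariant under rigid motions, so $w(\rho\Omega)=w(\Omega)$ and $\tau(\rho\Omega)=\tau(\Omega)$ for every rotation $\rho\in SO(n)$.

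First I would apply \eqref{BM4tau} to the pair $(\Omega,\rho\Omega)$ with an arbitrary $\rho\in SO(n)$ and weight $\lambda\in(0,1)$: setting $\Omega_\rho^\lambda=(1-\lambda)\Omega+\lambda\rho\Omega$, Borell's inequality gives $\tau(\Omega_\rho^\lambda)\ge\tau(\Omega)$, while linearity of $w$ yields $w(\Omega_\rho^\lambda)=w(\Omega)$. The natural limit of iterating this procedure is the \emph{Minkowski rotation average}
$$\widetilde\Omega=\int_{SO(n)}\rho\,\Omega\,d\rho,$$
defined through its support function $h(\widetilde\Omega,\xi)=\int_{SO(n)}h(\Omega,\rho^{-1}\xi)\,d\rho$, where $d\rho$ denotes the normalized Haar measure on $SO(n)$. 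Bi-invariance of $d\rho$ renders the right-hand side independent of $\xi\in S^{n-1}$, so $\widetilde\Omega$ is a Euclidean ball centred at the origin; moreover the common value of its support function equals $\frac{1}{n\omega_n}\int_{S^{n-1}}h(\Omega,\eta)\,d\eta=w(\Omega)/2$, whence $\widetilde\Omega$ coincides (up to translation) with $\Omega^\sharp$.

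The next step is to upgrade the pairwise inequality \eqref{BM4tau} to an integral Jensen-type inequality along the rotation average: approximating $\widetilde\Omega$ in the Hausdorff metric by finite Minkowski combinations $\sum_{k=1}^N\lambda_k\rho_k\Omega$, iterating \eqref{BM4tau}, and using the continuity of $\tau$ on $\mathcal{K}_0^n$ under Hausdorff convergence, one obtains
$$\tau(\Omega^\sharp)^{1/(n+2)}=\tau(\widetilde\Omega)^{1/(n+2)}\ge\int_{SO(n)}\tau(\rho\Omega)^{1/(n+2)}\,d\rho=\tau(\Omega)^{1/(n+2)},$$
which is \eqref{eqn:urytau}. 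For the equality case I would invoke Colesanti's equality characterization \cite{cole}: equality in \eqref{BM4tau} forces $\Omega_0$ and $\Omega_1$ to be homothetic, and tracing this through the symmetrization implies that $\rho\Omega$ is homothetic to $\Omega$ for every $\rho\in SO(n)$; since $|\rho\Omega|=|\Omega|$, the homothety ratio is $1$, so $\Omega$ must be rotation-invariant, i.e. a ball, in which case it necessarily coincides with $\Omega^\sharp$. The main technical point is exactly the passage from the two-body Brunn--Minkowski inequality to its integral form along Haar averages on $SO(n)$; this is standard but is the only non-mechanical step in the argument.
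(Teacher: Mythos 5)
Your proposal is correct and takes essentially the same route as the paper: the paper realizes the rotation symmetrization via Hadwiger's theorem (equal-weight Minkowski means of rotated copies of $\Omega$ converging to the ball of the same mean width) and passes to the limit using monotonicity of $\tau$ plus its explicit value on balls, while you use the Haar-measure Minkowski rotation average approximated by finite combinations together with continuity of $\tau$ under Hausdorff convergence --- cosmetic differences only. The one step to make explicit is in the equality case: to force equality in a two-body instance of \eqref{BM4tau} you should apply the already-proved inequality to the intermediate body $(1-\lambda)\Omega+\lambda\rho\Omega$, whose mean width equals $w(\Omega)$ so that its torsion is squeezed between $\tau(\Omega)$ and $\tau(\Omega^{\sharp})$ (this is exactly the paper's chain $\tau(\Omega)\leq\tau(\Omega_k)\leq\tau(\Omega^{\sharp})$), after which the homothety conclusion from \cite{cole} and a centering at the Steiner point give that $\Omega$ is a ball.
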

\proof
Since $\tau$ is invariant under translations, we can translate $\Omega$ in a way that the point of Steiner $s$ of $\Omega$ coincides with the origin. We remind that the point of  Steiner $s(\Omega)$ of a convex set $\Omega$ is defined as
$$
s(\Omega)=\frac{1}{\omega_{n}}\int_{S^{n-1}} \! \theta h(\Omega,\theta) \, d\mathcal{H}^{n-1}(\theta).
$$
Using Hadwiger's Theorem (see \cite{sch}) there exists a sequence of rotations $\{\rho_{k}\}$ such that
\begin{equation}\label{eqn:had}
\Omega_{k}=\frac{1}{k}(\rho_{1}\Omega + \dots + \rho_{k}\Omega)
\end{equation}
converges, in the Hausdorff metric, to a ball. \\
We notice that $\Omega_{k}$ converges to $\Omega^{\sharp}$: in fact, since the mean-width is invariant under rigid motions and is additive under the  Minkowski sum (see \cite{sch}), we get 
 $$
w(\Omega_{k}) =w(\Omega)=b
$$
 for all $k$ and so
$$
w(\Omega^{\sharp})=w(\Omega)=b.
$$
Moreover $s(\Omega_k)=0$ for all $k$ for the same reason, and then $\Omega^{\sharp}$ is the ball with radius $r=\frac{b}{2}$ centered at $0$. \\
Using \eqref{BM4tau} we get 
\begin{equation}\label{eqn:brunnappl} 
\tau(\Omega_{k}) \geq \tau(\Omega) \quad \mbox{for all} \, \, k>0,
\end{equation}
since  $\tau(\rho \Omega)=\tau(\Omega)$ for any rotation $\rho$.\\
Since $\Omega_{k}$ converges to $\Omega^{\sharp}$ in the Hausdorff metric when $k$ goes to infinity, for every $m>0$ there exists $k_m$ such that 
$$
\Omega_{k} \subseteq B(0,r + \frac{1}{m})
$$
for all $k \geq k_m$. Then
\begin{equation}\label{eqn:Brk}
\tau(\Omega_{k_m}) \leq \tau(B(0, r + \frac{1}{m})).
\end{equation}
By letting $m \rightarrow +\infty$, we finally get \eqref{eqn:urytau}. \\
Regarding the equality case, obviously if $\Omega$ is a ball we get the equality in \eqref{eqn:urytau}; conversely, the above proof gives 
$$
\tau(\Omega) \leq \tau(\Omega_{k}) \leq \tau(\Omega^{\sharp}) \quad \mbox{for all}\, \, k>0,
$$
then if equality holds in \eqref{eqn:urytau}, we have 
$$
\tau(\Omega)=\tau(\Omega_{k})=\tau(\Omega^{\sharp})
$$
for all $k>0$ and thanks to the equality case in \eqref{BM4tau}, we can conclude that $\Omega$ is  a ball.
\endproof

\subsection{Proof of Theorem \ref{thm:stabilitatau}} \label{stabrigidita}
Let us prove only \eqref{eqn:tesi1}; then \eqref{eqn:tesi2} can be proved in the same way, using \eqref{BM4tau2} in place of \eqref{BM4tau1}.

Let  $\Omega_{\rho}$ be a rotation of $\Omega$ with center in the centroid of $\Omega$ and set
$$
\tilde{\Omega}= \frac{1}{2}\Omega + \frac{1}{2}\Omega_{\rho}.
$$
First 
 notice that, since
 $$
w(\tilde{\Omega})=w(\Omega)\,,
$$
by \eqref{eqn:urytau} we get
$$
\tau(\tilde{\Omega}) \leq \tau(\Omega^{\sharp}).
$$
Since $\tau(\Omega_\rho)=\tau(\Omega)$, \eqref{BM4tau1} gives 
\begin{equation}\label{eqn:ipotau}
\tau(\tilde{\Omega}) \geq \tau(\Omega) +\beta' H_0(\Omega,\Omega_\rho)^{3(n+1)}
\end{equation}
where  
$$
\beta'=\frac{|\Omega|^{3(n+1)/n}}{8(n+\tau(\Omega)^{-1})^3}\left[4\gamma_n d(\Omega)\right]^{-3(n+1)}
$$
and $d(\Omega)$ is the diameter of $\Omega$.\\
Since
$$
H_0(\Omega,\Omega_\rho)= \frac{H(\Omega,\Omega_\rho)}{|\Omega|^{1/n}}\,,
$$
\eqref{eqn:ipotau} becomes
$$
\tau(\tilde{\Omega}) \geq \tau(\Omega)\,\left(1+ \mu H(\Omega,\Omega_\rho)^{3(n+1)}\right)\,,
$$
where 
\begin{equation}\label{mu}
\mu=\tau(\Omega)^2\left[2^{2n+3}\gamma_n^{n+1}d(\Omega)^{n+1}(n\tau(\Omega)+1)\right]^{-3}\,.
\end{equation}

Then we have just to show that we can find a rotation $\Omega_{\rho_0}$ of $\Omega$ such that
$$
H(\Omega, \Omega_{\rho_0})\geq H(\Omega, \Omega^{\sharp})
$$
Notice that, denoting by $h_{\Omega}$ and $h_{\Omega^{\sharp}}$ the support functions of $\Omega$ and $\Omega^{\sharp}$ respectively,
\begin{equation}\label{eqn:distanzah}
H(\Omega, \Omega^\sharp)=\max_{\theta \in S^{n-1}} |h_{\Omega}(\theta) - h_{\Omega^{\sharp}}(\theta) |=\max_{\theta \in S^{n-1}} |h_{\Omega}(\theta) - r |\,
\end{equation}
where $r$ is the radius of $\Omega^{\sharp}$, that is 
$$
r=\frac{w(\Omega)}{2}=\frac{1}{n\omega_n} \int_{S^{n-1}}\! h_{\Omega}(\theta) \, d\theta\,.
$$
By the mean value Theorem and the continuity of $h_{\Omega}$, there exists $\theta_0$ such that
$$
h_{\Omega}(\theta_0) = r.
$$
Take $\bar{\theta}$ such that the maximum in \eqref{eqn:distanzah} is attained at $\overline \theta$ and let $\rho_0$ be a rotation with center in the centroid of $\Omega$ such that
$$
h_{\Omega_{\rho_0}}(\bar{\theta})=h_{\Omega}(\theta_0).
$$
Then thanks to \eqref{eqn:distanzah} we get 
$$
H(\Omega, \Omega_{\rho_0}) \geq |h_{\Omega}(\bar{\theta})-  h_{\Omega_{\rho_0}}(\bar{\theta})|=H(\Omega,\Omega^\sharp),
$$
and we conclude the proof.
\begin{oss}\label{constants}
{\em During the proof we find an explicit value for the constant $\mu$. The same can be done for the constant $\nu$; here it is:
\begin{equation}\label{nu}
\nu=\frac{(1-2^{-1/n})^9\,\tau(\Omega)^2}{181^2\,n^{39}\,(n\tau(\Omega)+1)^3}\,.
\end{equation}
}
\end{oss}
\begin{oss}\label{oss:ext}
\rm{
Let us denote by $\Omega^\star$ a ball with the same measure as $\Omega$.
Then we notice that \eqref{eqn:urytau} is weaker than the well known St Venant's inequality (see \cite{PS}) 
\begin{equation}\label{tauiso}
\tau(\Omega) \leq \tau(\Omega^\star),
\end{equation}
since $\tau$ is increasing with respect to inclusion and
$$
\Omega^\star \subseteq \Omega^\sharp
$$  
by the classical Urysohn's inequality between mean width and volume of convex sets. This is due to the fact that the Laplacian or other kind of operator written in divergence form works better under Schwarz symmetrization. Moreover, any quantitative version of \eqref{tauiso} would imply immediately the same quantitative result for \eqref{eqn:uryt}. However, to our knowledge, no quantitative version of \eqref{tauiso} have been proved yet.
}
\end{oss}

\section{Quantitative $p$-Minkowski convolutions and mean width rearragements}\label{furthappl}
Results like Theorem \ref{thm:BM4tau} and Theorem \ref{thm:stabilitatau} can be obtained for other functionals related to different elliptic operators. In particular, we could for instance derive similar results for the $p$-Laplacian,  for the $2$-Hessian operator in $\mathbb{R}^3$ and for the extremal Pucci's operator $\mathcal{P}^-_{\Lambda_1, \Lambda_2}$;
the corresponding Brunn-Minkowski inequalities, as well as the needed concavity  and comparison results, similar to Proposition \ref{prop:concavita} and Proposition \ref{prop:comparison}, can be explicitly found in or retrieved from \cite{cocusa}, \cite{sala} and \cite{BS, SMD}, respectively.
The same authors of this paper also investigated some Monge-Amp\`ere functionals (whose Brunn-Minkowski inequalities can be found in \cite{hart, SMA}), obtaining similar results in \cite{GS1}.\\
An interesting and quite general formulation of  some of the applications cited above can be given through  the so-called  \textit{mean-width rearrangements}, introduced by the second author in \cite{SMD}; to this aim, we recall hereafter some results from that paper.

Consider two convex sets $\Omega_0,\Omega_1$, fix $\lambda \in (0,1)$ and as usual set $\Omega_\lambda=(1-\lambda)\Omega_0+\lambda\Omega_1$.  Let $u_0, u_1$ and $u_\lambda$ be the solutions of the corresponding Dirichlet problem
$$
(P_i)\quad
\left\{
\begin{array}{ll}
F_i(x,u_i,D u_i, D^2u_i)=0 & \textrm{ in } \Omega_i\,,\\
u_i=0 & \textrm{ on } \partial \Omega_i\,,\qquad i=0,1,\mu\\
u_i>0 &\textrm{ in }\Omega_i\,,
\end{array}
\right.
$$
where $F_i:\mathbb{R}^n\times\R\times\mathbb{R}^n\times\Sn\to \mathbb{R}$ is a continuous proper degenerate elliptic operators (here and throughout $\Sn$ denotes the space of $n\times n$ real symmetric matrices).\\
For any $p\geq 0$ and for every fixed $\theta\in\RR^n$  we define
$G_{i,p}^{(\theta)}:
\Omega_i\times (0,+\infty)\times \Sn \rightarrow \RR$ as follows:
\begin{equation}\label{Gpno0}
\left\{\begin{array}{ll}
G_{i,0}^{(\theta)}(x,t,A)=F_i(x,e^t,e^t\theta,
e^tA)&\\
&\text{for } i=0,1,\lambda\,.
\\
G_{i,p}^{(\theta)}(x,t,A)= F_i(x,t^{\frac{1}{p}},t^{\frac{1}{p}-1}\theta,
t^{\frac{1}{p}-3}A)\,\,\,\mbox{ if }p>0&
\end{array}\right.
\end{equation}

We say that $F_0,F_1,F_\lambda$ satisfy the assumption $(A_{\lambda,p})$  if, for every fixed $\theta\in\rn$, the following holds:
$$
\begin{array}{ll}
G^{(\theta)}_{\lambda,p}\big((1-\lambda)x_0+\lambda x_1, &\!\!\!\!(1-\lambda)t_0+\lambda t_1,(1-\lambda)A_0+\lambda A_1\big)\geq\\
&\qquad\min\{
\geq G_{0,p}^{(\theta)}(x_0,t_0,A_0);\,G_{1,p}^{(\theta)}(x_1,t_1,A_1)\}
\end{array}
$$
for every $x_0\in\Omega_0$, $x_1\in\Omega_1$, $t_0,t_1>0$ and $A_0,A_1\in \Sn$.\\
\begin{oss}\label{11}{\em
If $F_0=F_1=F_\lambda$, we are simply requiring the operator $G_p^\theta$ to be  quasi-concave, i.e. with convex superlevel sets.}
\end{oss}

In \cite{SMD} it is proved that, under suitable assumptions, the $p$-Minkowski convolution $u_{p,\lambda}$ of the solutions $u_0$ and $u_1$ of $(P_0)$ and $(P_1)$ is a subsolution of problem $(P_\lambda)$; we recall the precise statement in the following proposition.
\begin{prop}\label{mdprop}
Let $\lambda\in(0,1)$, $\Omega_i$ an open bounded convex set and $u_i$ a classical solution of $(P_i)$ for $i=0,1$.
Assume 
that $F_0,F_1,F_\lambda$ satisfy the assumption $(A_{\lambda,p})$ for some $p\in[0,1)$.
If $p>0$, assume furthermore that for $i=0,1$ it holds
\begin{equation}
\label{liminf}
\liminf_{y\rightarrow x}
\frac{\partial u_i(y)}{\partial \nu}>0
\end{equation}
for every $x\in\partial\Omega_i$, where $\nu$ is any inward direction of $\Omega_i$ at $x$.
Then $u_{p,\lambda}$ is a viscosity subsolution of $(P_\lambda)$.
\end{prop}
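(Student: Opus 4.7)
The proof will follow the classical ``double test function'' strategy. Take a $C^2$ test function $\phi$ touching $u_{p,\lambda}$ from above at $\bar x\in\Omega_\lambda$, normalized so $\phi(\bar x)=u_{p,\lambda}(\bar x)$. By continuity of $u_0,u_1$ and compactness of $\overline{\Omega}_0\times\overline{\Omega}_1$, pick a pair $(\bar x_0,\bar x_1)$ attaining the supremum in \eqref{defulambda} with $\bar x=(1-\lambda)\bar x_0+\lambda\bar x_1$. I would then show that both $\bar x_i$ lie in the interior $\Omega_i$: a maximizing pair with either coordinate on $\partial\Omega_i$ would (by the convention in \eqref{pmean} for $p\in(0,\infty)$, or via the logarithmic form for $p=0$) force $u_{p,\lambda}(\bar x)=0$, contradicting the positive value obtained from any interior pair; the hypothesis \eqref{liminf} is used as the quantitative tool that rules out boundary maximizers when $p>0$.

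Set $\psi=\phi^p$ and $v_i=u_i^p$ for $p>0$ (and $\psi=\log\phi$, $v_i=\log u_i$ for $p=0$), and define test functions for $u_0,u_1$ through the inverse $(p,\lambda)$-convolution:
\begin{equation*}
\psi_0(x_0)=\tfrac{1}{1-\lambda}\bigl[\psi((1-\lambda)x_0+\lambda\bar x_1)-\lambda v_1(\bar x_1)\bigr],\qquad \psi_1(x_1)=\tfrac{1}{\lambda}\bigl[\psi((1-\lambda)\bar x_0+\lambda x_1)-(1-\lambda)v_0(\bar x_0)\bigr].
\end{equation*}
Combining $\phi\geq u_{p,\lambda}$ with the pointwise inequality $u_{p,\lambda}((1-\lambda)x_0+\lambda x_1)^p\geq (1-\lambda)v_0(x_0)+\lambda v_1(x_1)$ built into the definition \eqref{defulambda}, together with the maximizer identity $(1-\lambda)v_0(\bar x_0)+\lambda v_1(\bar x_1)=\psi(\bar x)$, one checks that $\psi_i\geq v_i$ in a neighborhood of $\bar x_i$ with equality precisely at $\bar x_i$. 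Since $\psi_i(\bar x_i)=u_i(\bar x_i)^p>0$ thanks to the interior-point argument, $\phi_i:=\psi_i^{1/p}$ (resp.\ $\exp\psi_i$ for $p=0$) is a well-defined $C^2$ test function touching $u_i$ from above at $\bar x_i$. A direct chain-rule computation yields the identities $D\psi_0(\bar x_0)=D\psi_1(\bar x_1)=D\psi(\bar x)=:\theta$ (a common gradient that will serve as the fixed parameter $\theta$ in $(A_{\lambda,p})$) and $D^2\psi_0(\bar x_0)=(1-\lambda)D^2\psi(\bar x)$, $D^2\psi_1(\bar x_1)=\lambda D^2\psi(\bar x)$.

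Since each $u_i$ is a classical solution of $(P_i)$ and $\phi_i$ touches $u_i$ from above at the interior point $\bar x_i$, degenerate ellipticity of $F_i$ provides the appropriate-sign inequality for $F_i$ at $(\bar x_i,u_i(\bar x_i),D\phi_i(\bar x_i),D^2\phi_i(\bar x_i))$; rewriting through the transformation \eqref{Gpno0}, this becomes the correct-sign statement about $G_{i,p}^{(\theta)}(\bar x_i,\psi_i(\bar x_i),D^2\psi_i(\bar x_i))$ for $i=0,1$. The identities from the previous paragraph make the Minkowski averages required by $(A_{\lambda,p})$ line up automatically, namely $(1-\lambda)\bar x_0+\lambda\bar x_1=\bar x$ and $(1-\lambda)\psi_0(\bar x_0)+\lambda\psi_1(\bar x_1)=\psi(\bar x)$, so the assumption transfers these inequalities to $G_{\lambda,p}^{(\theta)}$ at the point $\bigl(\bar x,\psi(\bar x),(1-\lambda)D^2\psi_0(\bar x_0)+\lambda D^2\psi_1(\bar x_1)\bigr)$. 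A final matrix-comparison step—using proper ellipticity of $G_{\lambda,p}$ in its last argument to bridge the gap between the averaged Hessian and $D^2\psi(\bar x)$—yields the subsolution inequality for $F_\lambda$ at $(\bar x,u_{p,\lambda}(\bar x),D\phi(\bar x),D^2\phi(\bar x))$, which is the required viscosity subsolution condition.

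The main obstacle is precisely this last matrix-comparison: the averaged Hessian produced by $(A_{\lambda,p})$ equals $((1-\lambda)^2+\lambda^2)D^2\psi(\bar x)$, not $D^2\psi(\bar x)$ itself, so closing the gap requires combining monotonicity of $G_{\lambda,p}$ in its matrix variable with the sign information on $D^2\psi(\bar x)$ that is inherited from $\phi$ touching the (quasi-concave, under the operator assumptions) function $u_{p,\lambda}$ from above, and it is here that the scaling exponents hidden in the definition of $G^{(\theta)}_{i,p}$ play their most delicate role. Two secondary technical points are the boundary-exclusion step (where \eqref{liminf} does its work in the case $p>0$) and the parallel treatment of $p=0$ through the logarithmic parametrization of $G_{i,0}^{(\theta)}$ in \eqref{Gpno0}, where all power transforms are replaced by $\log$/$\exp$ but the overall scheme is identical.
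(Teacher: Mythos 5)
You should note at the outset that the paper offers no proof of Proposition \ref{mdprop}: it is recalled verbatim from \cite{SMD}, so your argument has to stand on its own. Its skeleton is the right one — a $C^2$ function $\phi$ touching $u_{p,\lambda}$ from above at $\bar x$, a maximizing pair $(\bar x_0,\bar x_1)$ in \eqref{defulambda}, interiority of that pair (for $p>0$ this follows already from the zero Dirichlet data together with the convention in \eqref{pmean}), the first-order identity producing the common vector $\theta=D(\phi^p)(\bar x)=D(u_0^p)(\bar x_0)=D(u_1^p)(\bar x_1)$, and a transfer of signs through $(A_{\lambda,p})$. But at the decisive step your proof has a genuine gap, which you flag yourself and whose proposed repair cannot work.

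By decoupling into the two one-variable test functions $\psi_0,\psi_1$ you retain only the diagonal second-order information $D^2\psi_0(\bar x_0)=(1-\lambda)D^2\psi(\bar x)$, $D^2\psi_1(\bar x_1)=\lambda D^2\psi(\bar x)$, so $(A_{\lambda,p})$ gives the sign of $G^{(\theta)}_{\lambda,p}$ at the matrix $\big((1-\lambda)^2+\lambda^2\big)D^2\psi(\bar x)$. Upgrading this to $D^2\psi(\bar x)$ by monotonicity in the matrix variable would require $2\lambda(1-\lambda)D^2\psi(\bar x)\succeq 0$, i.e. $D^2\psi(\bar x)$ \emph{positive} semidefinite; the concavity-type sign you invoke would give, at best, the opposite inequality, and in any case no concavity of $u_{p,\lambda}$ is assumed or known here (power concavity enters only as the extra hypothesis of Theorem \ref{quantmdbbl}). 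So the bridge does not close. The correct move is not to decouple: since $\phi\geq u_{p,\lambda}$ and, near the interior pair, $u_{p,\lambda}((1-\lambda)x_0+\lambda x_1)^p\geq(1-\lambda)u_0(x_0)^p+\lambda u_1(x_1)^p$, the two-variable function $(x_0,x_1)\mapsto\phi((1-\lambda)x_0+\lambda x_1)^p-(1-\lambda)u_0(x_0)^p-\lambda u_1(x_1)^p$ has an interior minimum at $(\bar x_0,\bar x_1)$; testing its full second-order condition on vectors of the form $(\xi,\xi)$ — exactly the cross terms your construction throws away — yields $D^2\psi(\bar x)\succeq(1-\lambda)D^2(u_0^p)(\bar x_0)+\lambda D^2(u_1^p)(\bar x_1)$, with no spurious factor. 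Moreover, since the $u_i$ are \emph{classical} solutions you need no auxiliary viscosity test functions at all: feed $t_i=u_i(\bar x_i)^p$, $A_i=D^2(u_i^p)(\bar x_i)$ and the common $\theta$ into $(A_{\lambda,p})$, use $F_i(\bar x_i,u_i,Du_i,D^2u_i)=0$ to get the nonnegativity of $G^{(\theta)}_{i,p}$ at these arguments, and apply degenerate ellipticity once to replace $(1-\lambda)A_0+\lambda A_1$ by $D^2\psi(\bar x)$. Two further loose ends: your stated use of \eqref{liminf} is redundant (the mean convention already excludes boundary maximizers wherever $u_{p,\lambda}>0$), so the hypothesis is idle in your argument, whereas it is genuinely needed in \cite{SMD} — for $p<1$ it governs the blow-up of $D(u_i^p)$ at $\partial\Omega_i$ and hence the configurations degenerating to the boundary, which you never address; and in rewriting $F_i$ as $G^{(\theta)}_{i,p}$ you silently drop the rank-one chain-rule term proportional to $\psi^{1/p-2}D\psi\otimes D\psi$, whose sign (positive semidefinite, since $p<1$) must be handled explicitly and in the right direction.
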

Then, when a comparison principle holds, it is possible to estimate the solution $u_\lambda$ of $(P_\lambda)$ by means of $u_{p,\lambda}$ and then by means of $u_0$ and $u_1$.
\begin{cor}\label{prevcor}
In the same assumptions of the previous theorem, if $F_\lambda$ satisfies a Comparison Principle and $u_\lambda$ is a viscosity solution of $(P_\lambda)$, then
\begin{equation}\label{eqmainthm}
u_\lambda((1-\lambda)x_0+\lambda\,x_1)\geq M_p(u_0(x_0),u_1(x_1);\lambda)
\end{equation}
for every $x_0\in\Omega_0,\,x_1\in\Omega_1$.
\end{cor}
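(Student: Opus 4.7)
The corollary is an almost immediate consequence of Proposition \ref{mdprop} once we can invoke the Comparison Principle. So the plan is simply to verify that $u_{p,\lambda}$ and $u_\lambda$ are in a position where the Comparison Principle applies on $\Omega_\lambda$, and then to unwind the definition of $u_{p,\lambda}$.

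First I would record what Proposition \ref{mdprop} already gives for free: $u_{p,\lambda}$ (as defined in \eqref{defulambda}) is a viscosity subsolution of the Dirichlet problem $(P_\lambda)$ in $\Omega_\lambda = (1-\lambda)\Omega_0 + \lambda \Omega_1$. Next I would check the boundary behavior. Since $u_i \in C(\overline{\Omega_i})$ and $u_i = 0$ on $\partial \Omega_i$ for $i=0,1$, the continuity of $u_{p,\lambda}$ on $\overline{\Omega_\lambda}$ (noted in \S\ref{sec:convolution}) together with the fact that any boundary point of $\Omega_\lambda$ admits a representation $(1-\lambda)x_0+\lambda x_1$ with at least one of $x_0 \in \partial \Omega_0$, $x_1 \in \partial \Omega_1$, forces $u_{p,\lambda} = 0$ on $\partial \Omega_\lambda$. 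Thus $u_{p,\lambda} \leq u_\lambda$ on $\partial \Omega_\lambda$.

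Then I would apply the Comparison Principle for $F_\lambda$ to the subsolution $u_{p,\lambda}$ and the solution $u_\lambda$ in $\Omega_\lambda$: since they agree (or $u_{p,\lambda}$ is dominated) on $\partial \Omega_\lambda$, one gets
\[
u_{p,\lambda}(x) \leq u_\lambda(x) \qquad \text{for all } x \in \Omega_\lambda.
\]
Finally, by the very definition \eqref{defulambda} of $u_{p,\lambda}$, for any $x_0 \in \Omega_0$ and $x_1 \in \Omega_1$ one has
\[
u_{p,\lambda}\bigl((1-\lambda)x_0 + \lambda x_1\bigr) \geq \mathcal{M}_p\bigl(u_0(x_0), u_1(x_1);\lambda\bigr),
\]
and chaining these two inequalities yields \eqref{eqmainthm}.

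There is essentially no obstacle here — all the technical work (the power-concavity of the envelope, the subsolution property, and the structural assumption $(A_{\lambda,p})$) has been absorbed into Proposition \ref{mdprop}. The only mild point worth being careful about is the boundary identification $u_{p,\lambda}|_{\partial\Omega_\lambda}=0$, which rests on continuity up to the boundary and on the Minkowski-sum description of $\partial \Omega_\lambda$; once that is observed, the Comparison Principle does the rest in one line.
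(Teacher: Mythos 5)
Your proof is correct and is essentially the argument the paper has in mind (the paper merely says that once $u_{p,\lambda}$ is known to be a subsolution, the comparison principle lets one estimate $u_\lambda$ from below by $u_{p,\lambda}$, which by definition dominates $\mathcal{M}_p(u_0(x_0),u_1(x_1);\lambda)$).

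One small point to tighten: to conclude $u_{p,\lambda}=0$ on $\partial\Omega_\lambda$ it is not enough to say that a boundary point $z$ \emph{admits} a decomposition $z=(1-\lambda)x_0+\lambda x_1$ with one of $x_0,x_1$ on the respective boundary — since $u_{p,\lambda}(z)$ is a supremum over \emph{all} such decompositions, you need that \emph{every} decomposition of $z$ has this property. This stronger statement does hold: if both $x_0\in\operatorname{int}\Omega_0$ and $x_1\in\operatorname{int}\Omega_1$, then $z$ lies in the open set $(1-\lambda)\operatorname{int}\Omega_0+\lambda\operatorname{int}\Omega_1\subseteq\operatorname{int}\Omega_\lambda$, a contradiction. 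Since one of $u_0(x_0),u_1(x_1)$ then vanishes and $\mathcal{M}_p(a,b;\lambda)=0$ whenever $ab=0$ for $p\ge 0$, the supremum is $0$, and the comparison principle applies as you describe.
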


By a combination of the previous result with the BBL inequality, we can finally compare the $L^r$ norms of the involved functions for any $r\in(0,+\infty]$; this is Corollary $4.2$ of \cite{SMD}, which we recall now.
\begin{cor}\label{mdmainres}
With the same assumptions and notation of Corollary \ref{prevcor}, we have
\begin{equation}\label{eqLpnorm}
||u_\lambda||_{L^r(\Omega_\lambda)} \geq M_{\frac{pr}{np+r}}\left(||u_0||_{L^r(\Omega_0)},||u_1||_{L^r(\Omega_1)},\lambda\right)\quad \mbox{ for every } r \in (0,+\infty]\,.
\end{equation}
\end{cor}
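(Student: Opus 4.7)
The plan is to bootstrap the pointwise lower bound from Corollary \ref{prevcor} into an integral lower bound via the BBL inequality, applied not to $u_0,u_1$ themselves but to their $r$-th powers. The whole argument reduces to two elementary identities for $p$-means: $\mathcal{M}_p(a,b;\lambda)^r=\mathcal{M}_{p/r}(a^r,b^r;\lambda)$ and $\mathcal{M}_q(A,B;\lambda)^{1/r}=\mathcal{M}_{qr}(A^{1/r},B^{1/r};\lambda)$, both of which are immediate from the defining formula \eqref{pmean}.

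First I would fix $r\in(0,+\infty)$ and apply Corollary \ref{prevcor} to write $u_\lambda((1-\lambda)x_0+\lambda x_1)\geq \mathcal{M}_p(u_0(x_0),u_1(x_1);\lambda)$ for all $x_0\in\Omega_0,\,x_1\in\Omega_1$. Raising to the $r$-th power and using the first identity above gives
$$
u_\lambda\big((1-\lambda)x_0+\lambda x_1\big)^r\geq \mathcal{M}_{p/r}\big(u_0(x_0)^r,u_1(x_1)^r;\lambda\big),
$$
i.e.\ the triple $(u_0^r,u_1^r,u_\lambda^r)$ satisfies the BBL hypothesis \eqref{assumptionh} with exponent $p/r$. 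Since $p\geq 0$ and $r>0$, we have $p/r\geq 0>-1/n$, so Theorem \ref{bbl} applies and yields
$$
\int_{\Omega_\lambda} u_\lambda^r\,dx \geq \mathcal{M}_{\frac{p/r}{n(p/r)+1}}\!\left(\int_{\Omega_0} u_0^r\,dx,\int_{\Omega_1} u_1^r\,dx;\lambda\right).
$$

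A direct simplification of the exponent gives $\frac{p/r}{n(p/r)+1}=\frac{p}{np+r}$, and then taking $r$-th roots on both sides and invoking the second identity above converts the right-hand side into $\mathcal{M}_{pr/(np+r)}\big(\|u_0\|_{L^r},\|u_1\|_{L^r};\lambda\big)$, which is precisely \eqref{eqLpnorm}. For the endpoint $r=+\infty$, the statement reduces to $\|u_\lambda\|_{L^\infty(\Omega_\lambda)}\geq \mathcal{M}_p(\|u_0\|_{L^\infty},\|u_1\|_{L^\infty};\lambda)$, which follows directly from Corollary \ref{prevcor} by choosing $x_0,x_1$ at the maxima of $u_0$ and $u_1$ (these exist since the $u_i$ are continuous on compact domains); this is consistent with the limit $\lim_{r\to\infty}\frac{pr}{np+r}=p$.

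No serious obstacle is expected: the argument is entirely mechanical once one notices that BBL can be iterated by applying it to powers of the functions rather than to the functions themselves. The only minor subtlety is checking that the hypotheses of Theorem \ref{bbl} are preserved under the substitution $u_i\mapsto u_i^r$ (trivially, since $u_i\geq 0$ and bounded with compact support, so $u_i^r\in L^1$) and tracking the exponents through the two algebraic identities for $p$-means, which as noted above are both immediate.
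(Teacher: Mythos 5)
Your proof is correct and follows exactly the route the paper intends: apply BBL to the $r$-th powers $u_i^r$, which satisfy the BBL hypothesis with exponent $p/r$ thanks to Corollary \ref{prevcor} and the identity $\mathcal{M}_p(a,b;\lambda)^r=\mathcal{M}_{p/r}(a^r,b^r;\lambda)$, then undo the power on the resulting means. This is the same "apply the theorem to $u_i^r$ with exponent $p/r$" device the authors use explicitly in their proof of Theorem \ref{quantmdbbl}, so no divergence from the paper's method.
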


Then it is probably clear as, by applying Theorem \ref{thm:teostab1}, we can easily get  the refinements of \eqref{eqLpnorm} which are the content of the following theorem.

\begin{thm}\label{quantmdbbl}
With the same assumptions and notation of Corollary \ref{prevcor}, assume furthermore that for $p>0$ 
\begin{equation}\label{power0}
u_0\, \text{ and }\, u_1 \, \, \mbox{are $p$-concave functions}
\end{equation}
(with convex compact supports $\Omega_0$ and $\Omega_1$ respectively). 
Then, if $H_0(\Omega_0,\Omega_1)$ and $A(\Omega_0,\Omega_1)$ are small enough, for every $r \in (0,+\infty]$ it holds 
\begin{equation}\label{quantbbl1md}
||u_\lambda||_{L^r(\Omega_\lambda)}^r\geq \mathcal{M}_{\frac{pr}{np+r}} \left (||u_0||_{L^r(\Omega_0)}, ||u_1||_{L^r(\Omega_1)}, \lambda \right)^r + \beta \,H_0(\Omega_0,\Omega_1)^{\frac{(n+1)(p+r)}{p}}
\end{equation}
and 
\begin{equation}\label{quantbbl2md}
||u_\lambda||_{L^r(\Omega_\lambda)}^r \geq \mathcal{M}_{\frac{pr}{np+r}}  \left (||u_0||_{L^r(\Omega_0)}, ||u_1||_{L^r(\Omega_1)}, \lambda \right)^r+ \delta \,A(\Omega_0,\Omega_1)^{\frac{2(p+r)}{p}}, 
\end{equation}
where $\delta, \beta$ are  constants depending only on $n,\, \lambda,\, p,\,||u_0||_{L^r(\Omega_0)}^r,\,||u_1||_{L^r(\Omega_0)}^r$ and on the measures of $\Omega_0$ and $\Omega_1$.
\end{thm}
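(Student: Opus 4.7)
The plan is to reduce Theorem \ref{quantmdbbl} directly to Theorem \ref{thm:teostab1} and Theorem \ref{thm:teostab2} by a change of exponent from $p$ to $p/r$. Fix $r\in(0,+\infty)$, set $v_i:=u_i^r$ ($i=0,1$) and $h:=u_\lambda^r$; I will (a) verify that the triple $(v_0,v_1,h)$ satisfies the BBL hypothesis \eqref{assumptionh} with parameter $p/r$; (b) check that $v_0,v_1$ are $(p/r)$-concave with the same convex compact supports; (c) apply the two quantitative BBL theorems to $(v_0,v_1,h)$ and translate the conclusions back to $L^r$-norm notation via the elementary identity
$$
\mathcal{M}_q(a^r,b^r;\lambda)=\mathcal{M}_{qr}(a,b;\lambda)^r,\qquad q,r>0,\ a,b\geq 0,
$$
which is immediate from the definition \eqref{pmean}.

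For (a), Corollary \ref{prevcor} provides $u_\lambda((1-\lambda)x_0+\lambda x_1)\geq \mathcal{M}_p(u_0(x_0),u_1(x_1);\lambda)$ for every $x_0\in\Omega_0$, $x_1\in\Omega_1$. Raising both sides to the $r$-th power and invoking the above identity with $q=p/r$ gives exactly $h((1-\lambda)x_0+\lambda x_1)\geq \mathcal{M}_{p/r}(v_0(x_0),v_1(x_1);\lambda)$. For (b), $p$-concavity of $u_i$ is by definition concavity of $u_i^p$; but $u_i^p=(u_i^r)^{p/r}=v_i^{p/r}$, so $v_i$ is $(p/r)$-concave on the same convex support $\Omega_i$. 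Thus all the hypotheses of Theorem \ref{thm:teostab1} (and Theorem \ref{thm:teostab2}) are satisfied with $p/r>0$ in place of $p$.

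Step (c) is pure bookkeeping. Theorem \ref{thm:teostab1} applied to $(v_0,v_1,h)$ with exponent $p/r$ yields
$$
\int_{\Omega_\lambda}u_\lambda^r\,dx\geq \mathcal{M}_{\frac{p/r}{n(p/r)+1}}(I_0,I_1;\lambda)+\beta\,H_0(\Omega_0,\Omega_1)^{\frac{(n+1)(p/r+1)}{p/r}},
$$
where $I_i=\int_{\Omega_i}v_i\,dx=\|u_i\|_{L^r(\Omega_i)}^r$. The exponents simplify to $\frac{p}{np+r}$ and $\frac{(n+1)(p+r)}{p}$; applying the displayed identity once more with $q=p/(np+r)$ converts $\mathcal{M}_{p/(np+r)}(\|u_0\|_{L^r}^r,\|u_1\|_{L^r}^r;\lambda)$ into $\mathcal{M}_{pr/(np+r)}(\|u_0\|_{L^r},\|u_1\|_{L^r};\lambda)^r$, which is exactly the right-hand side of \eqref{quantbbl1md}. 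The proof of \eqref{quantbbl2md} is identical, starting from Theorem \ref{thm:teostab2} in place of Theorem \ref{thm:teostab1}.

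I do not expect a serious obstacle: the whole argument is a reparametrization of previously established results. The only care required is to track how the ``small enough'' thresholds on $H_0(\Omega_0,\Omega_1)$ and $A(\Omega_0,\Omega_1)$ and the explicit constants of Remark \ref{oss1} transform under the substitution $p\mapsto p/r$, $u_i\mapsto u_i^r$; in particular $L_i$ becomes $L_i^r$ and $I_i$ becomes $\|u_i\|_{L^r(\Omega_i)}^r$, which explains the stated dependencies of the new constants $\beta,\delta$. The limiting case $r=+\infty$ is not covered by this reduction as written, but it is in fact trivial, as it already follows directly from Corollary \ref{prevcor} (where $\mathcal{M}_\infty$ is just the maximum).
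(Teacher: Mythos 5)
Your proposal is correct and follows essentially the same route as the paper: the paper's proof likewise applies Theorems \ref{thm:teostab1} and \ref{thm:teostab2} to the functions $u_i^r$, which are $(p/r)$-concave, using Corollary \ref{prevcor} to verify the BBL hypothesis with exponent $p/r$. Your exponent bookkeeping and the identity $\mathcal{M}_q(a^r,b^r;\lambda)=\mathcal{M}_{qr}(a,b;\lambda)^r$ match the paper's computations exactly, and your remark on the degenerate $r=+\infty$ case is consistent with the paper's own observation.
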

\begin{proof}
It is a straightforward combination of Corollary \ref{prevcor} and Theorems \ref{thm:teostab1} and \ref{thm:teostab2}, applied to $u_i^r$ for $i=0,1,\lambda$.
Indeed $u_0^r$ and $u_1^r$ are $(p/r)$-concave in $\Omega_0$ and $\Omega_1$, and \eqref{eqmainthm} gives
$$
u_\lambda((1-\lambda)x_0+\lambda\,x_1)^r\geq M_{p/r}(u_0(x_0)^r,u_1(x_1)^r;\lambda)
$$
for every $x_0\in\Omega_0,\,x_1\in\Omega_1$.

Then, applying Theorem \ref{thm:teostab1}  (with $p$ changed into $p/r$) to the functions $u_i^r$'s, we get \eqref{quantbbl1md}, while applying Theorem 
\ref{thm:teostab2} we obtain \eqref{quantbbl2md}.
\end{proof}

\begin{oss}\rm{
When the operators $F_0, F_1$ satisfy suitable assumptions (see for example \cite{PS}), the $p$-concavity of solutions $u_0$ and $u_1$ is known to hold, then there is no need of assuming \eqref{power0}. For instance this happens when $F_0=F_1=F_\lambda$ and $G_p^{(\theta)}$ is quasi-concave.}
\end{oss}

\begin{oss}{\em
Notice that, as $r\to+\infty$, both \eqref{quantbbl1md} and \eqref{quantbbl2md} yield
$$
||u_\lambda||_{L^\infty(\Omega_\lambda)} \geq M_{p}\left(||u_0||_{L^\infty(\Omega_0)},||u_1||_{L^\infty(\Omega_1)},\lambda\right)\,,
$$
the same as Corollary \ref{mdmainres}, which can not be improved, since \eqref{Llambda} holds.}
\end{oss}

Let us see an example of the previous result in the particular case when $u_0$ and $u_1$ are solutions of the following problems
$$
\left\{\begin{array}{ll}
\Delta u_0+f(x)=0\quad&\mbox{in }\Omega_0\\
u_0=0\quad&\mbox{on }\partial \Omega_0
\end{array}\right.
$$
and
$$
\left\{\begin{array}{ll}
\Delta u_1+f(x)=0\quad&\mbox{in }\Omega_1\\
\\
u_1=0\quad&\mbox{on }\partial\Omega_1\,.
\end{array}\right.
$$
Then take $\lambda\in(0,1)$ and set
$$
\Omega=(1-\lambda)\Omega_0+\lambda\,\Omega_1\,,
$$
Now let $u_\lambda$ be the solution of
$$
\left\{\begin{array}{ll}
\Delta u_\lambda+f(x)=0\quad&\mbox{in }\Omega\\
\\
u_\lambda=0\quad&\mbox{on }\partial\Omega.
\end{array}\right.
$$
In this particular case, we can write the following result.
\begin{cor}\label{mdtau}
Let $f$ be a smooth nonnegative function defined in $\rn$.
Assume $f$ is $\beta$-concave for some $\beta\geq 1$, that is $f^\beta$ is concave.
Then  \eqref{quantbbl1md} and \eqref{quantbbl2md} hold with 
$$
p=\frac{\beta}{1+2\beta}\,.
$$
\end{cor}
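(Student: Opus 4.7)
The plan is to apply Theorem \ref{quantmdbbl} with the common operator $F_0=F_1=F_\lambda=F$ defined by $F(x,t,\theta,A)=-\mathrm{tr}(A)-f(x)$ and with the exponent $p=\beta/(1+2\beta)$ from the statement. I must therefore verify the four ingredients Theorem \ref{quantmdbbl} requires: assumption $(A_{\lambda,p})$, the boundary condition \eqref{liminf}, a comparison principle for $F_\lambda$, and the $p$-concavity \eqref{power0} of $u_0$ and $u_1$.

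The key step is the verification of $(A_{\lambda,p})$. Since the three operators coincide, by Remark \ref{11} the assumption reduces to the joint quasi-concavity of the rescaled operator $G_p^{(\theta)}$ defined in \eqref{Gpno0}. Substitution of our $F$ into that definition expresses $G_p^{(\theta)}(x,t,A)$ as a power of $t$ multiplying $-\mathrm{tr}(A)$, minus $f(x)$. The choice $p=\beta/(1+2\beta)$ is precisely the one for which $1/p-2=1/\beta$, so the exponent of $t$ in the product is tuned exactly to couple with the concavity exponent of $f$; the concavity of $f^\beta$ then delivers quasi-concavity of every superlevel set $\{G_p^{(\theta)}\geq c\}$ on the admissible region $\{t>0,\ -\mathrm{tr}(A)\geq 0\}$, as in the standard $p$-concavity theory for elliptic PDE.

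The remaining hypotheses are classical: the Hopf boundary point lemma applied to the smooth positive solutions $u_0, u_1$ of $(P_0)$ and $(P_1)$ yields \eqref{liminf}; the Laplacian obviously satisfies a comparison principle on the convex domain $\Omega_\lambda$; and the $p$-concavity \eqref{power0} of $u_0, u_1$ with the same $p=\beta/(1+2\beta)$ is the classical Korevaar--Kennington type theorem for solutions of $-\Delta u=f$ with $\beta$-concave right-hand side on a convex body, a value that recovers the torsion case ($f\equiv 1$, formally $\beta=+\infty$, $p=1/2$) and Kennington's $\beta=1$, $p=1/3$ case. With all four ingredients in place, Theorem \ref{quantmdbbl} applies verbatim and produces \eqref{quantbbl1md} and \eqref{quantbbl2md} with the claimed $p$. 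The main obstacle, I expect, is the bookkeeping of exponents in verifying $(A_{\lambda,p})$; everything else is a direct invocation of tools cited or developed in the preceding sections.
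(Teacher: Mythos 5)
Your strategy is the intended one (the paper states the corollary as a direct instance of Theorem \ref{quantmdbbl} with $F_0=F_1=F_\lambda$ equal to the Laplacian plus $f$), but the step you yourself single out as the crux --- the verification of the structural condition --- is claimed in a form that is false. From \eqref{Gpno0} the exponent of $t$ multiplying the Hessian is $\frac1p-3$, and for $p=\frac{\beta}{1+2\beta}$ one has $\frac1p-3=\frac1\beta-1=-q$ with $q=1-\frac1\beta\in[0,1)$; so, with the sign convention under which subsolutions satisfy $F\geq 0$ (the one compatible with the $\min$ in $(A_{\lambda,p})$), i.e. $F(x,t,\theta,A)=\tr(A)+f(x)$, one gets $G_p^{(\theta)}(x,t,A)=t^{-q}\tr(A)+f(x)$. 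For $1<\beta<\infty$ this function is \emph{not} quasi-concave: freezing $x$, the superlevel set $\{t^{-q}\tr(A)\geq 1\}=\{\tr(A)\geq t^{q}\}$ is the region above a strictly concave graph, hence not convex (with $q=\frac12$ the points $(t,\tr(A))=(1,1)$ and $(9,3)$ belong to it while their midpoint $(5,2)$ does not, since $2<\sqrt5$). Restricting to $-\tr(A)\geq0$ is not permitted in $(A_{\lambda,p})$, which is required for all $A_0,A_1\in\Sn$, and would not save the claim anyway: nonzero levels still give non-convex sets (check e.g. $f$ depending on one variable like $\sqrt{x_1}$, $\beta=2$, level $c=1$), and with your convention $F=-\tr(A)-f$ the convex set is the zero \emph{sublevel} set, so even the level $c=0$ goes the wrong way as you have set things up.

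What is true --- and is all the machinery really needs, since the proof of Proposition \ref{mdprop} (hence Corollary \ref{prevcor}) only requires transferring the inequality $G\geq 0$ from $(x_i,t_i,A_i)$ to their convex combination, i.e. the zero-level analogue of \eqref{assg} --- is the convexity of $\{G_p^{(\theta)}\geq0\}=\{(x,t,A)\,:\,\tr(A)\geq -f(x)\,t^{q}\}$. This holds because $(x,t)\mapsto f(x)\,t^{q}$ is jointly \emph{concave}: $f$ is $\beta$-concave, $t\mapsto t^{q}$ is $(1/q)$-concave, and Lemma \ref{eqn:mean2} gives $s$-concavity of the product with $\frac1s=\frac1\beta+q=1$; this is exactly the tuning that forces $p=\frac{\beta}{1+2\beta}$ (the relevant identity is $\frac1p-3=\frac1\beta-1$, arithmetically equivalent to your $\frac1p-2=\frac1\beta$ but attached to the correct exponent of \eqref{Gpno0}). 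So your argument should replace ``quasi-concavity of every superlevel set'' by this zero-level verification, noting that the full min-condition $(A_{\lambda,p})$ is strictly stronger and actually fails here, so Theorem \ref{quantmdbbl} is invoked through the weaker hypothesis under which its proof, via \cite{SMD}, goes through. The remaining ingredients are fine: Hopf's lemma gives \eqref{liminf}, the Laplacian satisfies a comparison principle, and \eqref{power0} with $p=\beta/(1+2\beta)$ is Kennington's theorem --- or can be obtained for free from Corollary \ref{prevcor} applied with $\Omega_0=\Omega_1$.
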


\begin{oss}
{\em In case $f$ is a positive constant ($\beta=+\infty$), the same conclusions follow with $p=1/2$ and we find the results of Theorem \ref{thm:BM4tau}.}
\end{oss}

Let's see now as the technique above can be applied to improve some Talenti-like results (for operators not in divergence form) from \cite{SMD}. We need first to set some notations. Let $u$ be the solution of  the problem
\begin{equation}\label{problellipt}
\left\{\begin{array}{ll}
F(x, u,Du,D^2u) = 0 &\mbox{ in } \Omega\\
u=0 &\mbox{ on } \partial \Omega\\
u>0 &\mbox{ in } \Omega
\end{array}\right.
\end{equation}
where $F(x, t, \xi,A)$ is a continuous proper elliptic operator acting on $\mathbb{R}^n \times \mathbb{R}\times \mathbb{R}^n\times \Sn$ and $\Omega$ is an open bounded convex subset of $\mathbb{R}^n$. Let $v$ be the solution of 
\begin{equation}\label{vsol}
\left\{\begin{array}{ll}
F(x, v,Dv,D^2v) = 0 &\mbox{ in } \Omega^\star\\
v=0 &\mbox{ on } \partial \Omega^\star\\
v>0 &\mbox{ in } \Omega^\star
\end{array}\right.
\end{equation}
We consider $F$  a rotationally invariant operator, i.e. 
$$
F(\rho x, u, \rho \theta, \rho A\rho^T ) = F(x, u, \theta,A) 
$$
for every $(x, u, \theta,A) \in \mathbb{R}^n \times \R \times \R^n \times \Sn$ and every rotation $\rho \in \mbox{SO}(n)$.\\
\begin{oss}\rm{
Let $\rho \in SO(n)$ and denote by  $\Omega_\rho$ a rotation of $\Omega$ and by $u_\rho(x)=u(\rho^{-1}x)$ for $x \in \Omega_\rho$  a rotation of $u$. We remark that we consider rotationally invariant operators since the proof of Propositin \ref{seisei} relies mainly on the fact that if $u$ is a solution of \eqref{problellipt} in $\Omega$, then $u_\rho$ is a solution of \eqref{problellipt} in $\Omega_\rho$.\\ 
For instance, $F$ is rotationally invariant when it depends on $x$, $\theta$ and $A$
only in terms of $|x|, |\theta|$ and the eigenvalues of $A$, respectively.\\
}
\end{oss}
Given the operator $F$, a real number $p>0$ and a vector $\theta \in \R^n$, we set
$$
G^{(\theta)}_p(x,t,A)=F(x,t^{\frac{1}{p}}, t^{\frac{1}{p}-1}\theta,t^{\frac{1}{p}-3}A) \quad (x,t,A) \in \mathbb{R}^N\times [0,\infty)\times \Sn.
$$
Then we the following holds (see \cite{SMD}).
\begin{prop}\label{seisei}
Let $\Omega$ be a bounded open convex set in $\mathbb{R}^n$ and $u$ a solution of \eqref{problellipt} where $F$ is a rotationally invariant proper elliptic operator, and assume $u$ satisfies \eqref{liminf}.  Let $p\in (0,1)$ and assume that
\begin{equation}\label{assg}
\mbox{ the set }\, \big\{(x,t,A)\in [0,\infty)\times \Sn \, : \, G_p^{(\theta)}(x,t,A)\geq 0\big\}\, \mbox{ is convex}
\end{equation}
for every fixed $\theta \in \R^n$. Then
\begin{equation}\label{tallike} 
||v||_{L^r(\Omega^\sharp)}\geq ||u||_{L^r(\Omega)} \quad \mbox{ for every } r \in (0,+\infty]
\end{equation}
\end{prop}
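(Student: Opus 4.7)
The plan is to follow the Hadwiger-type approximation already exploited in the proof of the Urysohn inequality \eqref{eqn:urytau}, replacing the role of $\tau$ by the $L^r$ norm and the role of the Brunn-Minkowski inequality for $\tau$ by Corollary \ref{mdmainres}. The rotational invariance of $F$ is the key structural ingredient: for every rotation $\rho\in SO(n)$, the function $u_\rho(x)=u(\rho^{-1}x)$ solves \eqref{problellipt} on $\rho\Omega$, and a change of variables yields $\|u_\rho\|_{L^r(\rho\Omega)}=\|u\|_{L^r(\Omega)}$ for every $r\in(0,+\infty]$.

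By Hadwiger's theorem I would fix a sequence of rotations $\{\rho_k\}\subset SO(n)$ such that the Minkowski averages
$$\Omega_k=\frac{1}{k}\bigl(\rho_1\Omega+\dots+\rho_k\Omega\bigr)$$
converge in the Hausdorff metric to a ball. Invariance and additivity of the mean width under Minkowski sum give $w(\Omega_k)=w(\Omega)=w(\Omega^\sharp)$, so (up to translation) this limiting ball is precisely $\Omega^\sharp$. Letting $u_k$ be the solution of \eqref{problellipt} on $\Omega_k$, the next step is to prove by induction that $\|u_k\|_{L^r(\Omega_k)}\geq\|u\|_{L^r(\Omega)}$ for every $k$. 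The base case $k=1$ is immediate from the rotational invariance noted above. For the inductive step one writes $\Omega_{k+1}=\tfrac{k}{k+1}\Omega_k+\tfrac{1}{k+1}\rho_{k+1}\Omega$ and applies the machinery behind Corollary \ref{mdmainres} to $u_k$ and $u_{\rho_{k+1}}$ with $\lambda=1/(k+1)$: assumption \eqref{assg} is exactly what is needed for the $(p,\lambda)$-convolution $u_{p,\lambda}$ to be a viscosity subsolution of \eqref{problellipt} on $\Omega_{k+1}$, since only the level-$0$ set $\{G_p^{(\theta)}\geq 0\}$ is probed by the two solutions. The comparison principle then gives $u_{k+1}\geq u_{p,\lambda}$ pointwise on $\Omega_{k+1}$, and combining with the BBL-type bound (cf. \eqref{eqLpnorm}) one obtains
$$\|u_{k+1}\|_{L^r(\Omega_{k+1})}\;\geq\;\mathcal{M}_{\frac{pr}{np+r}}\bigl(\|u_k\|_{L^r(\Omega_k)},\|u\|_{L^r(\Omega)},\tfrac{1}{k+1}\bigr)\;\geq\;\|u\|_{L^r(\Omega)},$$
the last inequality following from the inductive hypothesis and the monotonicity of the $p$-mean in its arguments.

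To conclude, I would let $k\to\infty$. Given any $\epsilon>0$, eventually $\Omega_k\subseteq\Omega^\sharp+\epsilon\overline{B}_{n}$; denoting by $v_\epsilon$ the solution of \eqref{problellipt} on this outer convex body, a comparison argument yields $u_k\leq v_\epsilon$ on $\Omega_k$, hence
$$\|v_\epsilon\|_{L^r(\Omega^\sharp+\epsilon\overline{B}_{n})}\;\geq\;\|u_k\|_{L^r(\Omega_k)}\;\geq\;\|u\|_{L^r(\Omega)}.$$
Continuity of solutions of \eqref{vsol} under Hausdorff convergence of the domain finally gives $\|v_\epsilon\|_{L^r(\Omega^\sharp+\epsilon\overline{B}_{n})}\to\|v\|_{L^r(\Omega^\sharp)}$ as $\epsilon\to 0$, and \eqref{tallike} follows.

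The step I expect to be most delicate is this last passage to the limit: it rests on a comparison principle for $F$ and on the stability of viscosity solutions of \eqref{problellipt} under Hausdorff perturbations of the underlying convex body. Both are standard in this class of problems, but they are exactly where any additional structural hypotheses on $F$ (beyond rotational invariance, properness and the level-set convexity \eqref{assg}) must be invoked.
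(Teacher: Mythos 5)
Your overall strategy is sound in spirit and is genuinely different from the paper's: here Proposition \ref{seisei} is not proved via a Hadwiger iteration on $L^r$ norms but is imported from \cite{SMD}, where one builds the mean width rearrangement $u_p^\sharp$ (whose superlevel sets are balls with the same mean width as those of $u$, hence of larger measure by Urysohn's inequality) and proves the pointwise bound $u_p^\sharp\leq v$ in $\Omega^\sharp$, from which \eqref{tallike} follows by the layer-cake formula. Your scheme instead mimics the paper's proof of \eqref{eqn:urytau}, with Corollary \ref{mdmainres} in place of the Brunn--Minkowski inequality for $\tau$. However, as written it has genuine gaps. The main one is in the inductive step: you apply Proposition \ref{mdprop}/Corollary \ref{prevcor} to the pair $(u_k,u_{\rho_{k+1}})$, but these results require both inputs to be \emph{classical} solutions satisfying the boundary gradient condition \eqref{liminf}, and the hypotheses of Proposition \ref{seisei} guarantee this only for $u$ on $\Omega$, not for the solution $u_k$ on the Minkowski average $\Omega_k$ (whose very existence on $\Omega_k$ you are also taking for granted). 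A second gap is that Proposition \ref{mdprop} is stated under assumption $(A_{\lambda,p})$, which for $F_0=F_1=F_\lambda$ means quasi-concavity of $G_p^{(\theta)}$ (Remark \ref{11}), i.e.\ convexity of \emph{all} superlevel sets, whereas Proposition \ref{seisei} assumes only the convexity of the zero superlevel set \eqref{assg}. Your remark that ``only the level-$0$ set is probed'' when both inputs are exact solutions is the correct heuristic (and is essentially what is proved in \cite{SMD}), but within the toolkit of this paper it is an unproved strengthening of Proposition \ref{mdprop} and must be established by rerunning its viscosity argument.

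Both gaps can be repaired, and the repair also simplifies the limit step you flag as delicate: instead of convolving $u_k$ with one more rotated copy, convolve the $k$ rotated copies $u_{\rho_1},\dots,u_{\rho_k}$ of $u$ directly (a $k$-fold $(p,\lambda)$-convolution with equal weights $1/k$). Its subsolution property in $\Omega_k$ uses only the regularity of $u$, \eqref{liminf} and \eqref{assg}, and its $L^r$ norm dominates $\|u\|_{L^r(\Omega)}$ by iterating the BBL bound \eqref{eqLpnorm} at the level of the convolutions rather than of the intermediate solutions. One then compares this subsolution (extended by zero) with $v_\epsilon$ on $\Omega^\sharp+\epsilon\overline{B}_n$ via the comparison principle — which, note, is nowhere stated explicitly in Proposition \ref{seisei} and must be assumed, as in Corollary \ref{prevcor} — and concludes by the stability of the solution of \eqref{vsol} under Hausdorff convergence of the domain, which you correctly identify as needing justification but do not supply.
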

\begin{oss}\rm{
Notice that assumption \eqref{assg} is satisfied if the function $G_p^{(\theta)}$ is quasi-concave for every $\theta \in \R^n$, hence if it is $q$-concave for some $q \in \mathbb{R}$.}
\end{oss}
The proof of the above proposiion is based on the definition of the so-called \textit{mean-width rearrangement}. Roughly speaking,  we associate to $u$ a symmetrand $u^\sharp_p$ defined in the ball $\Omega^\sharp$
having the same mean width of  $\Omega$
 and, under suitable assumptions on the
operator $F$ (stated in Proposition \eqref{seisei}), we have a pointwise comparison between $u^*_p$ and the solution $v$  in $\Omega^\sharp$, that is
\begin{equation}\label{cinque}
u^\sharp_p\leq v \quad \mbox{in } \Omega^\sharp
\end{equation}
which leads to conclude \eqref{tallike}.\\
The precise definition of $u^\sharp_p$
is actually quite involved.  Here we just say that $u_p^\sharp$
 is not equidistributed with $u$, in contrast
with Schwarz symmetrization; indeed the measure of the super level sets of
$u_p^\sharp$ is greater than the measure of the corresponding super level sets of $u$.

By following the same argument used to prove Theorem \ref{thm:stabilitatau} and in particular applying Theorem \ref{quantmdbbl}, we derive the following quantitative version of Proposition \ref{seisei}.

\begin{thm}\label{quantmd}
With the same assumptions and notation of Theorem \ref{thm:stabilitatau} and Proposition \ref{seisei}, assume furthermore that
\begin{equation}\label{power1}
u\, \mbox{is $p$-concave}
\end{equation}
Then for $r>0$ it holds
$$
||v||_{L^r(\Omega^\sharp)}^r\geq  ||u||_{L^r(\Omega)}^r + \eta \,H^{\frac{(n+1)(p+r)}{p}}
$$
and 
$$
||v||_{L^r(\Omega^\sharp)}^r\geq  ||u||_{L^r(\Omega)}^r +  \sigma A^{\frac{2(p+r)}{p}}, 
$$
 where 
$H=H(\Omega, \Omega^\sharp)$ and $A=\max\{A(\Omega,\Omega_\rho):\rho\text{ rotation in }\rn\}$ are small enough, 
$\eta$ and $\sigma$ are constants, the former depending on $n$, $||u||_{L^r(\Omega)}^r$ and the diameter of $\Omega$, the latter depending only on $n$ and 
$||u||_{L^r(\Omega)}^r$. 
\end{thm}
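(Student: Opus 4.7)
The plan is to transplant the argument of Theorem \ref{thm:stabilitatau} to the general setting, replacing the quantitative Brunn--Minkowski inequality for $\tau$ (Theorem \ref{thm:BM4tau}) by the quantitative $p$-Minkowski convolution bound Theorem \ref{quantmdbbl}, and the Urysohn inequality \eqref{eqn:urytau} by the mean-width comparison Proposition \ref{seisei}.

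First I would fix a rotation $\rho\in SO(n)$ about the origin (which is the centroid of $\Omega$) and set $\Omega_\rho=\rho\Omega$, $u_\rho(x)=u(\rho^{-1}x)$. Rotational invariance of $F$ ensures that $u_\rho$ solves \eqref{problellipt} in $\Omega_\rho$, inherits from $u$ both the $p$-concavity assumed in \eqref{power1} and the boundary condition \eqref{liminf}, and satisfies $\|u_\rho\|_{L^r(\Omega_\rho)}=\|u\|_{L^r(\Omega)}$. Let $\tilde\Omega=\tfrac12\Omega+\tfrac12\Omega_\rho$ and let $\tilde u$ be the solution of \eqref{problellipt} in $\tilde\Omega$. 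Since the mean width is invariant under rotation and additive under Minkowski sum, $w(\tilde\Omega)=w(\Omega)$; and since the centroids of $\Omega,\Omega_\rho$ both sit at the origin, so does that of $\tilde\Omega$. Consequently $(\tilde\Omega)^\sharp=\Omega^\sharp$.

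Next I would apply Theorem \ref{quantmdbbl} with $\lambda=1/2$, $\Omega_0=\Omega$, $\Omega_1=\Omega_\rho$: the equality of the two $L^r$-norms collapses the $p$-mean on the right-hand side to $\|u\|_{L^r(\Omega)}^r$, yielding
\[
\|\tilde u\|_{L^r(\tilde\Omega)}^r\ge \|u\|_{L^r(\Omega)}^r+\beta\,H_0(\Omega,\Omega_\rho)^{(n+1)(p+r)/p}
\]
together with the analogue featuring $\delta A(\Omega,\Omega_\rho)^{2(p+r)/p}$. Because $|\Omega|=|\Omega_\rho|$ and the centroids already coincide, $H_0(\Omega,\Omega_\rho)=H(\Omega,\Omega_\rho)\,|\Omega|^{-1/n}$, so the $|\Omega|$-factor can be absorbed into the constant. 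Then Proposition \ref{seisei} applied to $\tilde u$ on $\tilde\Omega$ gives $\|v\|_{L^r(\Omega^\sharp)}\ge \|\tilde u\|_{L^r(\tilde\Omega)}$, and chaining these two inequalities produces, for every rotation $\rho$,
\[
\|v\|_{L^r(\Omega^\sharp)}^r\ge \|u\|_{L^r(\Omega)}^r+\eta\,H(\Omega,\Omega_\rho)^{(n+1)(p+r)/p},
\]
and the analogous estimate with $\sigma A(\Omega,\Omega_\rho)^{2(p+r)/p}$.

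Finally, to pass from $H(\Omega,\Omega_\rho)$ to $H(\Omega,\Omega^\sharp)$ I would reuse the support-function trick from the end of the proof of Theorem \ref{thm:stabilitatau}: by the mean-value theorem there is $\theta_0\in S^{n-1}$ with $h_\Omega(\theta_0)=w(\Omega)/2$, and rotating so that $\theta_0$ is mapped to a direction $\bar\theta$ realizing $\max_{\theta\in S^{n-1}}|h_\Omega(\theta)-w(\Omega)/2|$ produces a rotation $\rho_0$ with $H(\Omega,\Omega_{\rho_0})\ge H(\Omega,\Omega^\sharp)$; plugging $\rho=\rho_0$ above yields the first stated estimate. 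For the asymmetry inequality one simply chooses the rotation achieving the maximum in the definition of $A$. The only technical point that deserves careful checking, rather than being a genuine obstacle, is verifying that $\tilde u$ satisfies condition \eqref{liminf} so that Proposition \ref{seisei} is applicable to it; this is essentially a Hopf-type boundary property for solutions of \eqref{problellipt} on the convex domain $\tilde\Omega$, which should follow from the structural hypotheses already in force on $F$.
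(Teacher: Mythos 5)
Your proposal is correct and follows essentially the same route as the paper, which proves Theorem \ref{quantmd} precisely by repeating the argument of Theorem \ref{thm:stabilitatau} with Theorem \ref{quantmdbbl} in place of the quantitative Brunn--Minkowski inequality for $\tau$ and Proposition \ref{seisei} in place of the Urysohn inequality \eqref{eqn:urytau}, including the final support-function rotation trick. You even make explicit some points the paper leaves implicit (the collapse of the $p$-mean via $\|u_\rho\|_{L^r(\Omega_\rho)}=\|u\|_{L^r(\Omega)}$, the identity $(\tilde\Omega)^\sharp=\Omega^\sharp$, and the verification of \eqref{liminf} for $\tilde u$), which is consistent with the paper's sketched proof.
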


\begin{oss}\rm{
Notice that for $p=\frac{1}{2}$ and $r=1$ and $F(x,u,p,A)=\text{trace}(A)+1$, Theorem \ref{quantmd} coincides with the quantitative result for the torsional rigidity stated in Theorem \ref{thm:stabilitatau}. \\ 
We remark that Theorem \ref{quantmdbbl} and Theorem \ref{quantmd} apply also to nonlinear operators not in divergence form, for example  the $q$-Laplacian, the Finsler Laplacian and the Pucci Extremal operators.
}
\end{oss}

\end{document}